\theoremstyle{plain}
\newtheorem{thm}{Theorem}[section] 
\newtheorem{lem}[thm]{Lemma}
\newtheorem{prop}[thm]{Proposition}
\newtheorem{cor}[thm]{Corollary}
\newtheorem{lemma}[thm]{Lemma}
\theoremstyle{definition}
\newtheorem{defn}[thm]{Definition} 
\newtheorem{definition}[thm]{Definition} 
\newtheorem{exmp}[thm]{Example} 
\newtheorem{example}[thm]{Example}
\newtheorem{remark}[thm]{Remark}
\newtheorem{question}[thm]{Question}
\newcommand{\ZZ}{\mathbb{Z}}
\newcommand{\RR}{\mathbb{R}}
\newcommand{\NN}{\mathbb{N}}
\newcommand{\PP}{\mathbb{P}}
\newcommand{\mL}{\mathcal{L}}
\newcommand{\mP}{\mathcal{P}}
\newcommand{\mX}{\mathcal{X}}
\newcommand{\wA}{\widetilde{A}}
\newcommand{\wD}{\widetilde{D}}
\newcommand{\al}{\alpha}
\newcommand{\be}{\beta}
\newcommand{\ga}{\gamma}
\newcommand{\de}{\delta}
\newcommand{\om}{\omega}
\newcommand{\si}{\sigma}
\newcommand{\De}{\Delta}
\newcommand{\orient}[1]{\overleftrightarrow #1}
\renewcommand{\l}{\left}
\renewcommand{\r}{\right}
\newcommand{\Potts}{P}
\newcommand{\B}{B}
\newcommand{\Q}{B}
\newcommand{\As}{>} 
\newcommand{\Ds}{<} 
\newcommand{\wAs}{\geq}
\newcommand{\wDs}{\leq}
\newcommand{\defeq}{\vcentcolon=}
\newcommand{\chrom}{\chi}
\newcommand{\tf}{{\widetilde{f}}}
\newcommand{\bDe}{{\overline{\De}}}
\newcommand{\uD}{{\underline{D}}}
\newcommand{\bI}{{\overline{I}}}
\newcommand{\wchrom}{\widetilde{\chrom}}
\newcommand{\fS}{\mathfrak{S}}
\newcommand{\partialchrom}{\chrom}
\DeclareMathOperator{\xx}{\mathbf{x}}
\DeclareMathOperator{\ext}{ext}
\DeclareMathOperator{\acyc}{acyc}
\DeclareMathOperator{\ori}{Orient}
\DeclareMathOperator{\Des}{Des}
\DeclareMathOperator{\Asc}{Asc}
\DeclareMathOperator{\asc}{asc}
\DeclareMathOperator{\des}{des}
\DeclareMathOperator{\inv}{inv}
\DeclareMathOperator{\maj}{maj}
\DeclareMathOperator{\vv}{\textrm{v}}
\DeclareMathOperator{\comp}{\textrm{c}}
\DeclareMathOperator{\Surj}{Surj}
\DeclareMathOperator{\ONE}{\mathds{1}}
\DeclareMathOperator{\Id}{Id}
\DeclareMathOperator{\QSym}{QSym}
\DeclareMathOperator{\Sym}{Sym}
\DeclareMathOperator{\out}{outdeg}
\DeclareMathOperator{\ind}{indeg}
\newcommand{\ov}{\overline}
\newcommand{\und}{\underline}
\newcommand{\ds}{\displaystyle}
\newcommand{\del}{\setminus}
\newcommand{\bu}{\bullet}
\newcommand{\fig}[3]{\begin{figure}[h!]\begin{center}\includegraphics[#1]{#2}\end{center}\caption{#3}\label{fig:#2}\end{figure}}
\title{Tutte polynomials for directed graphs}
\author{Jordan Awan and Olivier Bernardi}
\thanks{J.A. acknowledges support from a GAANN fellowship by the U.S. Department of Education. O.B. acknowledges support from NSF grant DMS-1400859, and the hospitality of the Mathematics department at MIT during the Spring of 2016. }
\date{\today}
\begin{document}
\setcounter{tocdepth}{2}

\begin{abstract}
The Tutte polynomial is a fundamental invariant of graphs. In this article, we define and study a generalization of the Tutte polynomial for directed graphs, that we name the \emph{$B$-polynomial}. The $B$-polynomial has three variables, but when specialized to the case of graphs (that is, digraphs where arcs come in pairs with opposite directions), one of the variables becomes redundant and the $B$-polynomial is equivalent to the Tutte polynomial.
We explore various properties, expansions, specializations, and generalizations of the $B$-polynomial, and try to answer the following questions:
\begin{itemize}
\item what properties of the digraph can be detected from its $B$-polynomial (acyclicity, length of directed paths, number of strongly connected components, etc.)?
\item which of the marvelous properties of the Tutte polynomial carry over to the directed graph setting?
\end{itemize}
The $B$-polynomial generalizes the strict chromatic polynomial of mixed graphs introduced by Beck, Bogart and Pham. We also consider a quasisymmetric function version of the $B$-polynomial which simultaneously generalizes the Tutte symmetric function of Stanley and the quasisymmetric chromatic function of Shareshian and Wachs.
\end{abstract}


\maketitle
\section{Introduction}
The Tutte polynomial is a fundamental invariant of graphs. There is a vast and rich literature about the Tutte polynomial; see for instance~\cite{Brylawski:Tutte-poly,Welsh:PottsTutte} or~\cite[Chapter 10]{Bollobas:Tutte-poly} for an introduction. In this article, we investigate a generalization of the Tutte polynomial to directed graphs (or \emph{digraphs} for short). Note that graphs are a special case of digraphs: they are the digraphs such that arcs come in pairs with opposite directions. 

Several digraph analogues of the Tutte polynomials have been considered in the literature. This includes in particular the \emph{cover polynomial} of Chung and Graham~\cite{Chung:cover-polynomial}, and the \emph{Gordon-Traldi polynomials}~\cite{Gordon:polynomials-digraphs}.
See~\cite{Chow:Tutte-poly-digraph} and references therein for an overview of these digraph invariants.
These digraph invariants do share some of the features of the Tutte polynomial. However, most are not proper generalizations of the Tutte polynomial, as they are not equivalent to the Tutte polynomial for the special case of graphs. The only exception is the Gordon-Traldi polynomial denoted by $f_8$ in~\cite{Gordon:polynomials-digraphs}, which is an invariant for \emph{vertex-ordered digraphs} (pairs made of a digraph and a linear order of its vertices).

In the present article, we define a new digraph invariant that we call the \emph{$B$-polynomial}. For any digraph $D$, this invariant is a polynomial in three variables denoted by $B_D(q,y,z)$. The $B$-polynomial generalizes the Tutte polynomial to all digraphs. Precisely, when the digraph $D$ corresponds to a graph, then $B_D(q,y,z)$ is \emph{equivalent} (that is, equal up to a change of variables) to the Tutte polynomial $T_D(x,y)$. 
In effect, the third variable $z$ becomes redundant in the case of graphs, but it is not for general digraphs. There are actually two additional relations between the $B$-polynomial invariant and the Tutte polynomial. 
First, for any graph $G$, $T_G(x,y)$ is equivalent to the average of $B_D(q,y,z)$ over all digraphs $D$ obtained by orienting $G$.
Second, for any digraph $D$, $B_D(q,y,y)$ is equivalent to the Tutte polynomial $T_G(x,y)$ of the \emph{graph $G$ underlying $D$} (that is, the graph obtained by forgetting the direction of the arcs). These three properties make the $B$-polynomial a legitimate generalization of the Tutte polynomial.

Without further ado, let us define the $B$-polynomial. For a digraph $D=(V,A)$, we define $B_D(q,y,z)$ as the unique polynomial in the variables $q,y,z$ such that for any positive integer~$q$,
\begin{equation}\label{eq:def-B}
B_D(q,y,z)=\sum_{f:V\to \{1,2,\ldots,q\}}y^{\#(u,v)\in A,~f(v)>f(u)}\,z^{\#(u,v)\in A,~f(v)<f(u)}.
\end{equation}
In words, the $B$-polynomial counts the \emph{$q$-colorings} of $D$ (arbitrary functions from $V$ to $\{1,2,\ldots,q\}$) according to the number of strict ascents and strict descents.

\begin{example}\label{exp:B-intro}
For the digraphs represented in Figure~\ref{fig:exp-intro}, one gets 
\begin{eqnarray*}
 B_D(q,y,z)&=&q+\frac{q(q-1)}{2}(y+z),\\
 B_{D'}(q,y,z)&=&q+q(q-1)(y^2+z^2+yz)+\frac{q(q-1)(q-2)}{6}(y^3+z^3+2yz(y+z)),\\
 B_{D''}(q,y,z)&=&q+q(q-1)yz(y+z+1)+\frac{q(q-1)(q-2)}{6}yz(y^2+z^2+4yz).
\end{eqnarray*}
\end{example}

\fig{width=.8\linewidth}{exp-intro}{Three digraphs.}

As we show in Section~\ref{sec:basics}, the three relations mentioned above between the $B$-polynomial and the Tutte polynomial actually follow pretty easily from the known relation between the Tutte polynomial and the Potts model. Indeed it is known that for any graph $G$, the Tutte polynomial $T_G(x,y)$ defined by~\eqref{eq:def-Tutte} is equivalent to the Potts polynomial $P_G(q,y)$ defined by~\eqref{eq:def-Potts}, via the change of variables~\eqref{eq:Potts=Tutte}. The relations between the $B$-polynomial and the Tutte polynomial then take the following form.
\begin{thm} \label{thm:Bpoly-Tutte}
For any undirected graph $G$, 
\begin{equation}\label{eq:PottsOne}
\Q_{\orient{G}}(q,y,z)=P_G(q,yz),
\end{equation}
where $\orient{G}$ is the digraph corresponding to $G$, and 
\begin{equation}\label{eq:PottsTwo}
\frac{1}{2^{|E|}}\sum_{\vec{G}\in \ori(G)} \Q_{\vec{G}}(q,y,z)
= P_G\left(q,\frac{y+z}{2}\right),
\end{equation}
where the sum is over all digraphs $\vec{G}$ obtained by orienting $G$.

Moreover, for any digraph $D$, 
\begin{equation}\label{eq:PottsThree}
\Q_D(q,y,y)=P_{\und{D}}(q,y),
\end{equation}
where $\und{D}$ is the graph underlying $D$.
\end{thm}

Our goal is to investigate how much of the theory of the Tutte polynomial extends to the digraph setting, and unearth some new identities. 
Some highlights are the following.
\begin{itemize}
\item The invariant $B_D$ detects whether $D$ is acyclic. More generally, $B_D$ contains the generating function of acyclic subgraphs of $D$, counted according to their number of arcs (see Corollary~\ref{cor:GFacyc2}). This generalizes to digraphs a result of Gessel and Sagan about acyclic sub-orientations of graphs~\cite{Gessel:Tutte-poly+DFS}. 
Additionally, $B_D$ contains the generating function of acyclic reorientations of $D$, counted according to their number of reoriented arcs (see Corollary~\ref{cor:GFacyc1}). 
\item The invariant $B_D$ detects whether $D$ is totally cyclic. 
More generally, $B_D$ contains the generating function of totally cyclic contractions of $D$ (see Corollary~\ref{cor:GFcyc2}) and the generating function of totally cyclic reorientations of $D$ (see Corollary~\ref{cor:GFcyc1}).
\item The invariant $B_D$ detects the length of the longest directed path in $D$, the number of strongly connected components, and the number of linear extensions of the acyclic graph obtained from $D$ by contracting all the strongly connected components (see Corollary~\ref{cor:additional-prop}).
\item The $B$-polynomial satisfies a partial planar duality relation (see Theorem~\ref{thm:duality}). 
\item Digraphs can also be seen as \emph{mixed graphs} (a.k.a. \emph{partially oriented graphs}) by interpreting the pairs of arcs in opposite direction as \emph{unoriented edges}. An invariant of mixed graphs called the \emph{(strict) chromatic polynomial of mixed graphs} was investigated in~\cite{Sotskov:ChromaticMixedGraphs,Sotskov:ChromaticMixedGraphsrecent,Beck:strict-chromatic-poly-digraph}.
The $B$-polynomial of mixed graphs can be specialized to the chromatic polynomial of mixed graph (in the same way as the Tutte polynomial of graphs can be specialized to the chromatic polynomial; see Proposition~\ref{prop:chrom-partial}). In particular, by a result of Beck, Bogard, and Pham~\cite{Beck:strict-chromatic-poly-digraph}, one of the specializations of the $B$-polynomial gives the number of ways of orienting the unoriented edges of a mixed graph to get an acyclic orientation (see Corollary~\ref{cor:T20}). There is also a dual result about totally cyclic orientations of mixed graphs (see Corollary~\ref{cor:T02}), which generalizes a result of Las Vergnas~\cite{LasVergnas:Tutte(02)}.
\item The $B$-polynomial has a \emph{quasi-symmetric function} generalization $B_D(\xx;y,z)$. 
The invariant $B_D(\xx;y,z)$ generalizes to digraphs the \emph{Tutte symmetric function} defined by Stanley for graphs~\cite{Stanley:symmetric-chromatic-more}, and the \emph{chromatic quasisymmetric function} defined by Shareshian and Wachs for acyclic digraphs~\cite{Shareshian-Wachs:chromatic-quasisym-functions}. 
\end{itemize}

Besides defining and studying a natural generalization of the Tutte polynomial to digraphs, we attempt in this paper to shed a new light on some classical results about the Tutte polynomial of graphs. We are particularly motivated by certain classical results about the evaluations of the Tutte polynomial counting various classes of orientations, which we recover and extend in Section \ref{sec:Ehrhart}. Our approach in that section is reminiscent of the method developed in \cite{Beck:inside-out-polytopes} for explaining the seminal result of Stanley about the evaluations of the chromatic polynomial at negative integers \cite{Stanley:acyclic-orientations}. See for instance Remarks \ref{rk:inside-out} and \ref{rk:Potts-compatible-colorings}.

Actually, the $B$-polynomial is not the only natural generalization of the Tutte polynomial to digraphs. Indeed, in Section~\ref{sec:B-family}, we present an infinite family of digraph invariants satisfying Theorem~\ref{thm:Bpoly-Tutte}.
The $B$-polynomial is merely the ``simplest'' invariant among this family.
In the upcoming paper~\cite{Awan-Bernardi:A-poly}, we investigate another invariant from this infinite family, which we call \emph{$A$-polynomial}. Unlike the $B$-polynomial, the $A$-polynomial is an \emph{oriented matroid invariant}, that is, it only depends on the oriented matroid associated with the digraph.

The paper is organized as follows. 
In Section~\ref{sec:defs-graphs}, we set up our notation about digraphs.
In Section~\ref{sec:basics}, we define the $B$-polynomial, and explore its most immediate properties. In particular we prove the relations stated in Theorem~\ref{thm:Bpoly-Tutte} between the $B$-polynomial and the Tutte polynomial. 
We also define two invariants $T^{(1)}_D(x,y)$ and $T^{(2)}_D(x,y)$ in terms of the specialization $B_D(q,y,1)$, and we explain their relations to the Tutte polynomial of graphs and the chromatic polynomial of mixed graphs.
In Section~\ref{sec:recurrence}, we explore the consequences of recurrence relations for the $B$-polynomial. 
In Section~\ref{sec:expansions}, we give several expansions of the $B$-polynomial in terms of the oriented chromatic polynomials.
In Section~\ref{sec:Ehrhart}, we use Ehrhart theory to give an interpretation of the evaluations of the $B$-polynomial at negative values of $q$. We show that various generating functions of acyclic and totally cyclic ``modifications'' of $D$ can be obtained from the $B$-polynomial. We also prove that the $B$-polynomial of a planar digraph and its dual satisfy a partial symmetry relation. 
In Section~\ref{sec:Tutte-eval}, we interpret several evaluations of the invariants $T^{(1)}_D(x,y)$ and $T^{(2)}_D(x,y)$ in terms of orientations of mixed graphs.
In Section~\ref{sec:quasisym}, we present a quasisymmetric function generalization of the $B$-polynomial, and use the theory of $P$-partitions to prove some new properties.
In Section~\ref{sec:B-family}, we present a family of invariants generalizing the $B$-polynomial. We show that some of them are oriented matroid invariants.
We conclude in Section~\ref{sec:conclusion} with a list of open problems.

\smallskip

\section{Notation and definitions about graphs and digraphs}\label{sec:defs-graphs}
In this section we set some basic notation. 
We denote by $\NN$ the set of non-negative integers, and by $\PP$ the set of positive integers. For a positive integer $n$, we use the symbol $[n]$ to denote the set $\{1,\ldots,n\}$. For integers $a<b$, we use the symbol $[a..b]$ to denote the set $\{a,a+1,\ldots,b\}$. We denote by $\fS_n$ the set of permutations of $[n]$.

We denote by $|S|$ the cardinality of a set $S$. For sets $R,S,T$, we write $R\uplus S=T$ to indicate that $T$ is the \emph{disjoint union} of $R$ and $S$. 

For a polynomial $P$ in a variable $x$, we denote by $[x^k]P$ the coefficient of $x^k$ in $P$, and we denote by $\deg_x(P)$ its \emph{degree} in the variable $x$. 
For a condition $C$, the symbol $\ONE_C$ has value 1 if the condition $C$ is true, and 0 otherwise.

A \emph{graph} is a pair $G=(V,E)$, where $V$ is a finite set of \emph{vertices} and $E$ is a finite set of \emph{edges} which are subsets $\{u,v\}$ of 1 or 2 vertices (we accept $u=v$).
We denote by $\vv(G)$ the number of vertices of $G$, and by $\comp(G)$ the number of connected components of $G$. 

A \emph{directed graph}, or \emph{digraph} for short, is a pair $(V,A)$, where $V$ is a finite set of \emph{vertices} and $A$ is a finite set of \emph{arcs} which are pairs $(u,v)$ of vertices. We authorize our digraphs to have \emph{loops} (that is, arcs of the form $(u,u)$) and \emph{multiple arcs} (so that $A$ is really a \emph{multiset} which can contain multiple copies of each element $(u,v)$). 
We say that the arc $a=(u,v)$ has \emph{initial vertex} $u$, \emph{terminal vertex} $v$, and \emph{endpoints} $u$ and $v$. 
We say that the arc $(v,u)$ is the \emph{opposite} of the arc $(u,v)$.

The \emph{underlying graph} of a digraph $D=(V,A)$, denoted by $\uD$, is the graph which is 
obtained by replacing each arc $(u,v)$ by the edge $\{u,v\}$. 
An \emph{orientation} of a graph $G$ is a digraph $D$ with underlying graph $G$; it is obtained by choosing a direction for each edge $\{u,v\}$.
For a graph $G=(V,E)$, we denote by $\orient{G}=(V,A)$ the digraph obtained by replacing each edge $\{u,v\}\in E$ by the two opposite arcs $(u,v)$ and $(v,u)$. This operation identifies the set of graphs with the subset of digraphs such that for all $u,v\in V$ the arc $(u,v)$ appears with the same multiplicity as the opposite arc $(v,u)$.

Let $D=(V,A)$ be a digraph.
A set of arcs $S\subseteq A$ is a \emph{cocycle} if there exists a partition of the vertex set $V=V_1\uplus V_2$ such that $S$ is the set of edges with one endpoint in $V_1$ and one endpoint in $V_2$. It is a \emph{directed cocycle} if moreover the initial vertices of the arcs in $S$ are all in the same subset of vertices, say $V_1$. An arc $a\in A$ is called \emph{cyclic} if it is in a directed cycle, and \emph{acyclic} if it is in a directed cocycle. As is well known, any arc is either cyclic or acyclic, but never both. The digraph $D$ is \emph{acyclic} if every arc is acyclic (i.e. $D$ has no directed cycles), and \emph{totally cyclic} if every arc is cyclic (i.e. $D$ has no directed cocycles).

We now define the \emph{deletion}, \emph{contraction}, and \emph{reorientation} of digraphs. These operations are represented in Figure~\ref{fig:deletion-contraction}. Let $D=(V,A)$ be a digraph, and let $a=(u,v)\in A$ be an arc.
\begin{itemize}
\item We denote by $D_{\setminus a}$ the digraph obtained by \emph{deleting} $a$, that is, removing $a$ from $A$.
\item We denote by $D_{/a}$ the digraph obtained by \emph{contracting} $a$, that is, removing $a$ from $A$ and identifying its two endpoints $u$ and $v$. Note that if $u=v$, then $D_{/a}=D_{\setminus a}$.
\item We denote by $D^{-a}$ the digraph obtained by \emph{reorienting} $a$, that is, replacing $a=(u,v)$ by the opposite arc $-a=(v,u)$.
\end{itemize}
Note that these three operations commute with each other. For instance, for any distinct arcs $a,b\in A$, $D_{/a/b}=D_{/b/a}$ and $D_{\del a/b}=D_{/b\del a}$.
Hence, the definition of deletion, contraction and reorientation can be extended to sets of arcs: for any disjoint sets of arcs $R,S,T\subseteq A$ we denote by $D_{\del R/S}^{-T}$ the digraph obtained from $D$ by deleting the arcs in $R$, contracting the arcs in $S$, and reorienting the arcs in $T$.

\fig{width=.6\linewidth}{deletion-contraction}{The result of deleting, contracting, and reorienting an arc $a$ of $D$.}


\smallskip

\section{The $B$-polynomial and its relation to the Potts and Tutte polynomials}\label{sec:basics}
In this section we define the $B$-polynomial of digraphs, and study its most immediate properties. In particular, we establish the existence of a polynomial $B_D$ satisfying~\eqref{eq:def-B}, and prove Theorem~\ref{thm:Bpoly-Tutte}.

Let $D=(V,A)$ be a digraph, and let $q$ be a positive integer. 
We call a function from $V$ to $[q]$ a \emph{$q$-coloring of $D$}, and we call $f(v)$ the \emph{color} of the vertex $v$.
For any function from $V$ to $\ZZ$, we denote by $f_A^{\As}$ the set of arcs $(u,v)\in A$ such that $f(v)>f(u)$. 
We define the set of arcs $f_A^{\Ds}$, $f_A^{\wAs}$, $f_A^{\wDs}$, $f_A^=$, $f_A^{\neq}$ similarly. 
We call the elements in $f_A^{\As}$ and $f_A^{\Ds}$ the \emph{ascents} and \emph{descents} of $f$ respectively. 
A $q$-coloring $f$ is \emph{proper} if $f_A^==\emptyset$.

\begin{thm}\label{thm:existence-B}
Let $D=(V,A)$ be a digraph. There exists a (unique) trivariate polynomial $B_D(q,y,z)$ such that for all positive integers~$q$,
$$\B_D(q,y,z)=\sum_{f: V\to [q]}y^{|f_A^{\As}|}z^{|f_A^{\Ds}|}.$$
where the sum is over all $q$-colorings of $D$. We call $B_D(q,y,z)$ the \emph{$B$-polynomial} of $D$.
\end{thm}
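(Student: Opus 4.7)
My plan is to decompose the sum $\sum_{f: V \to [q]} y^{|f_A^{\As}|}z^{|f_A^{\Ds}|}$ by grouping $q$-colorings according to the \emph{ordered set partition} of $V$ they induce, and to thereby exhibit the sum as an explicit polynomial in $q$, $y$, $z$.

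To each $q$-coloring $f: V \to [q]$, associate the ordered set partition $C(f) = (C_1, \ldots, C_k)$ of $V$ whose blocks are the non-empty fibers $f^{-1}(c)$, listed in increasing order of the color value $c$. Conversely, the $q$-colorings inducing a fixed ordered partition $C = (C_1, \ldots, C_k)$ are in bijection with the $k$-element subsets $\{c_1 < \cdots < c_k\} \subseteq [q]$ (the coloring assigns the value $c_i$ to every vertex in $C_i$); there are thus exactly $\binom{q}{k}$ such colorings. Moreover, for any arc $(u, v) \in A$ with $u \in C_i$ and $v \in C_j$, the arc is an ascent iff $i < j$, a descent iff $i > j$, and contributes to neither statistic iff $i = j$. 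Hence the ascent and descent counts depend only on $C$ and not on the chosen subset; denote them $a(C)$ and $d(C)$.

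Summing over ordered set partitions $C$ of $V$ then yields
\[
\sum_{f: V \to [q]} y^{|f_A^{\As}|}z^{|f_A^{\Ds}|} \;=\; \sum_{C} \binom{q}{k(C)}\, y^{a(C)}\, z^{d(C)},
\]
where $k(C)$ denotes the number of blocks of $C$. The right-hand side is a finite sum of terms of the form $\binom{q}{k}\,y^a z^d$, each of which is a polynomial in $q, y, z$ since $\binom{q}{k} = q(q-1)\cdots(q-k+1)/k!$ is a polynomial of degree $k$ in $q$. This defines the desired trivariate polynomial $B_D(q, y, z)$, and uniqueness is immediate because a univariate polynomial in $q$ is determined by its values at all positive integers.

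There is no substantial obstacle here; the only thing requiring care is the observation that the binomial factor $\binom{q}{k}$ absorbs all the $q$-dependence of the count, while the ascent/descent exponents are genuine invariants of the ordered partition. As a sanity check, the formula is consistent with Example~\ref{exp:B-intro}: the coefficient of $\binom{q}{k}$ records contributions from ordered set partitions with exactly $k$ blocks, e.g.\ for the two-vertex single-arc digraph $D$ the two 2-block orderings produce the term $\binom{q}{2}(y+z)$, matching the stated $\frac{q(q-1)}{2}(y+z)$.
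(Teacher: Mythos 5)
Your proof is correct and is essentially the paper's own argument: an ordered set partition of $V$ (blocks listed by increasing color) is exactly the same data as a surjective $p$-coloring, which is how the paper phrases the reduction, and the $\binom{q}{k}$ count and the invariance of ascent/descent statistics under order-preserving relabeling of colors appear identically in both. No substantive difference.
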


\begin{proof}
The proof of Theorem~\ref{thm:existence-B} is based on the reduction from general $q$-colorings to \emph{surjective $q$-colorings} (surjective functions from $V$ to $[q]$). 
Let $\Surj(V,q)$ be the set of surjective $q$-colorings. 
Given an arbitrary $q$-coloring $f$, we consider the number of colors used $p=|f(V)|\leq |V|$, and the unique order-preserving bijection $\varphi$ from $f(V)$ to $[p]$. Let $\tf$ be the surjective $p$-coloring $\varphi\circ f$. Observe that $|f_A^{\As}|=|\tf_A^{\As}|$ and $|f_A^{\Ds}|=|\tf_A^{\Ds}|$. Hence,
\begin{eqnarray*}
\sum_{f: V\to [q]}y^{|f_A^{\As}|}z^{|f_A^{\Ds}|}&=&\sum_{p=1}^{|V|}~\sum_{g\in \Surj(V,p)}~\sum_{f:[V]\to [q]\textrm{ such that } \tf=g}y^{|f_A^{\As}|}z^{|f_A^{\Ds}|},\nonumber\\
&=& \sum_{p=1}^{|V|}~\sum_{g\in \Surj(V,p)}y^{|g_A^{\As}|}z^{|g_A^{\Ds}|}\left|\{f:[V]\to [q] \textrm{ such that } \tf=g\}\right|,\nonumber\\
&=& \sum_{p=1}^{|V|}{q\choose p}\sum_{g\in \Surj(V,p)}y^{|g_A^{\As}|}z^{|g_A^{\Ds}|},
\end{eqnarray*}
where the third line uses the observation that a $q$-coloring $f$ is uniquely identified by the surjective $p$-coloring $\tf$ and the set $f(V)$ (there are ${q\choose p}$ ways to choose this set). Thus, the trivariate polynomial
\begin{equation}\label{eq:q-falling-factorial}
B_D(q,y,z)=\sum_{p=1}^{|V|}\frac{q(q-1)\ldots (q-p+1)}{p!}\sum_{g\in \Surj(V,p)}y^{|g_A^{\As}|}z^{|g_A^{\Ds}|},
\end{equation}
satisfies the properties of the theorem. The uniqueness of $B_D$ is obvious since distinct polynomials in $q$ cannot agree on infinitely many values of $q$. 
\end{proof}

We now state some immediate properties of the $B$-polynomial.
\begin{prop}\label{prop:easy} 
For any digraph $D=(V,A)$, the $B$-polynomial has the following properties.
\begin{compactenum}[(a)]
\item\label{it1} $B_D(q,y,z)=B_D(q,z,y)$. 
\item\label{it2} Reorienting all the arcs of $D$ does not change the $B$-polynomial: $B_{D^{-A}}(q,y,z)=B_D(q,y,z)$.
\item\label{it3} $\ds \sum_{f: V\to [q]}x^{\l|f_A^=\r|}y^{\l|f_A^{\As}\r|}z^{\l|f_A^{\Ds}\r|}=x^{|A|}B_D\l(q,\frac{y}{x},\frac{z}{x}\r)$.
\item\label{it4} $\ds \sum_{f: V\to [q]}y^{\l|f_A^{\wAs}\r|}z^{\l|f_A^{\wDs}\r|}=(yz)^{|A|}B_D\l(q,\frac{1}{y},\frac{1}{z}\r)$.
\item\label{it5} The $B$-polynomial of the digraph with a single vertex and no arcs is $q$.
\item\label{it6} If $a=(u,u)\in A$ is a loop, then $B_{D_{\setminus a}}(q,y,z)=B_D(q,y,z)$.
\item\label{it7} If $D$ is the disjoint union of two digraphs $D_1$ and $D_2$, then $B_{D}(q,y,z)=B_{D_1}(q,y,z)\,B_{D_2}(q,y,z)$.
\item\label{it8} $\deg_q(B_D(q,y,z))=|V|$ and $B_D(q,1,1)=q^{|V|}$. If $D$ has no loops, $\deg_y(B_D(q,y,y))=|A|$.
\item\label{it9} The polynomial $B_D(q,y,z)$ is divisible by $q^{\comp(D)}$, and $B_D(q,0,0)=q^{\comp(D)}$.
\item\label{it10} The expansion of the polynomial $B_D(q,y,z)$ in the basis $\{\frac{q(q-1)\ldots (q-p+1)}{p!}y^iz^j\}_{p>0,i,j\geq 0}$ has positive integer coefficients. 
\item\label{it11} $\ds |V|![q^{|V|}]B_D(q,y,z)=\sum_{\textrm{bijection }f:V\to[|V|] }y^{|f_A^{\As}|}z^{|f_A^{\Ds}|}$.
\item\label{it12} $B_D(2,1,0)$ is the number of \emph{directed cuts} of $D$ (that is, the number of subsets of vertices $U\subset V$ such that every arc joining $U$ to $V\setminus U$ is oriented away from $U$).
\end{compactenum}
\end{prop}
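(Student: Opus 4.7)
The plan is to verify each of the twelve properties by returning to the defining sum~\eqref{eq:def-B}, using three elementary tools: the color-reversing involution $f \mapsto (q+1-f)$ on $q$-colorings (which swaps $f_A^{\As}$ and $f_A^{\Ds}$ for each $f$), the fact that each arc $a = (u,v)$ falls into exactly one of $f_A^=, f_A^{\As}, f_A^{\Ds}$ under any fixed coloring (giving a per-arc factorization of each summand), and the basis expansion~\eqref{eq:q-falling-factorial} established in the proof of Theorem~\ref{thm:existence-B}. Items~(a) and~(b) then follow: (a) is the color-reversing involution, and reversing all arcs of $D$ for a fixed $f$ also turns ascents into descents, giving $B_{D^{-A}}(q,y,z) = B_D(q,z,y)$, which equals $B_D(q,y,z)$ by~(a). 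Items~(c) and~(d) are per-arc reweightings: for~(c) each arc contributes a factor of $x$, $y$, or $z$ depending on its type, so pulling out $x^{|A|}$ rescales the variables by $1/x$; for~(d), note that an arc lies in $f_A^{\wAs}$ iff it is not in $f_A^{\Ds}$ and in $f_A^{\wDs}$ iff it is not in $f_A^{\As}$, so each arc contributes $y$, $z$, or $yz$ depending on type, and one concludes by specializing $x = yz$ in~(c) and invoking~(a). Items~(e), (f), and~(g) are immediate from~\eqref{eq:def-B}: the single-vertex digraph has an empty arc set, so each coloring contributes $1$; a loop satisfies $f(u) = f(u)$ and contributes to neither $f_A^{\As}$ nor $f_A^{\Ds}$; and $q$-colorings of a disjoint union biject with pairs of colorings of the parts with the statistics splitting accordingly.

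Property~(j) is precisely the content of equation~\eqref{eq:q-falling-factorial}. From it the remaining items follow by inspection. For~(h), the basis expansion gives $\deg_q B_D \leq |V|$, and equality holds because the coefficient of $\binom{q}{|V|}$ is the nonzero polynomial $\sum_{g \in \Surj(V, |V|)} y^{|g_A^{\As}|} z^{|g_A^{\Ds}|}$ (enumerating bijections); the evaluation $B_D(q,1,1) = q^{|V|}$ is immediate from~\eqref{eq:def-B}; and for $\deg_y B_D(q,y,y) = |A|$ in the loopless case, rewrite $B_D(q,y,y) = \sum_f y^{|A| - |f_A^=|}$ and observe that once $q \geq |V|$ there exists a coloring with all distinct values, achieving $|f_A^=| = 0$. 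Item~(k) is the leading $q$-coefficient of~\eqref{eq:q-falling-factorial}, using $\binom{q}{|V|} = q^{|V|}/|V|! + O(q^{|V|-1})$ and the identification $\Surj(V,|V|) = \{\textrm{bijections } V \to [|V|]\}$. For~(i), every summand in~\eqref{eq:q-falling-factorial} carries a factor $\binom{q}{p}$ with $p \geq 1$, so $q \mid B_D$; the stronger divisibility $q^{\comp(D)} \mid B_D$ follows by applying~(g) to the connected components; and $B_D(q,0,0) = q^{\comp(D)}$ holds because setting $y = z = 0$ selects the colorings with no ascent and no descent, i.e., those constant on each connected component, of which there are exactly $q^{\comp(D)}$.

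Finally, for~(l), identify $q$-colorings $f : V \to [2]$ with partitions $V = V_1 \uplus V_2$ via $V_i := f^{-1}(i)$. Setting $z = 0$ kills any coloring with a descent, leaving partitions in which no arc runs from $V_2$ to $V_1$; equivalently, every arc crossing the cut is oriented from $V_1$ (which plays the role of $U$) to $V_2 = V \setminus U$. Since $y = 1$ weights each surviving coloring by~$1$, $B_D(2,1,0)$ counts precisely the directed cuts as defined. No single item poses a serious obstacle; the only real care required is keeping straight which of the sets $f_A^{\As}, f_A^{\Ds}, f_A^=, f_A^{\wAs}, f_A^{\wDs}$ is being summed over, particularly in~(c) and~(d) where they interact.
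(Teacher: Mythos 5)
Your proof is correct and follows essentially the same route as the paper's: the color-reversing involution for (a)--(b), per-arc accounting of the factor contributed by each arc for (c)--(d), the falling-factorial expansion \eqref{eq:q-falling-factorial} for (h)--(k), and the identification of $2$-colorings with vertex subsets $U_f=f^{-1}(1)$ for (l). The only cosmetic difference is that you certify $\deg_q B_D=|V|$ via the nonvanishing of the leading coefficient in the falling-factorial basis, whereas the paper reads it off from $B_D(q,1,1)=q^{|V|}$; both are valid.
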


\begin{proof}
Property~\eqref{it1} is easy to prove using the involution on $q$-colorings which changes the color $i\in[q]$ by $q+1-i$ (so that ascents become descents and vice-versa). The same reasoning gives~\eqref{it2}.
Property~\eqref{it3} is clear from the fact that $|f_A^=|=|A|-|f_A^{\As}|-|f_A^{\Ds}|$. 
Property~\eqref{it4} is clear from~\eqref{it1} and~\eqref{it3}. It shows that the $B$-polynomial can be equivalently thought as counting $q$-colorings according to the number of \emph{weak ascents} and \emph{weak descents}. The properties~\eqref{it5},~\eqref{it6},~\eqref{it7} are obvious from the definitions. 
For~\eqref{it8}, observe that $B_D(q,1,1)$ counts all $q$-colorings, hence $B_D(q,1,1)=q^{|V|}$. Moreover $\deg_q(B_D(q,y,z))$ cannot be more than $|V|$ by~\eqref{eq:q-falling-factorial}. Clearly $\deg_y(B_D(q,y,y))\leq |A|$, and considering proper $q$-colorings gives equality for loopless digraphs. 
For~\eqref{it9}, observe that~\eqref{eq:q-falling-factorial} shows that $q$ divides $B_D(q,y,z)$. Hence, by~\eqref{it7}, $q^{\comp(D)}$ divides $B_D(q,y,z)$. 
Moreover, $B_D(q,0,0)$ counts $q$-colorings which are constant on each connected components, and there are $q^{\comp(D)}$ such $q$-colorings.
Property~\eqref{it10} and~\eqref{it11} follow directly from~\eqref{eq:q-falling-factorial}. Lastly,~\eqref{it12} is clear from the definitions upon identifying each 2-coloring $f$ with the subset of vertices $U_f=\{v\in V~|~f(v)=1\}$. 
\end{proof}

\begin{remark}\label{rk:not-detected}
We will see that a number of properties of a digraph $D$ (acyclicity, maximal length of directed paths, etc.) can be read off the invariant $B_D(q,y,z)$. However, Property~\eqref{it2} of Proposition~\ref{prop:easy} shows that certain properties cannot be read off $B_D(q,y,z)$. In particular, the outdegree distribution of $D$, the number of sources of $D$, or the number of \emph{directed spanning trees} (spanning trees such that every vertex except one has indegree 1) cannot be read off $B_D(q,y,z)$.
\end{remark}

\begin{example}
Let us illustrate the significance of Proposition~\ref{prop:easy}\eqref{it11} for the digraphs $D$, $D'$, $D''$ represented in Figure~\ref{fig:exp-permut-stats}. Note that, up to assuming $V=[n]$ for some integer $n$, the sum in~\eqref{it11} is over the permutations of $[n]$; and these permutations are counted according to some statistics described by~$A$.
For instance, if $D$ is a directed path then the statistics are the number of ascents and descents. More precisely, if $D=([n],A)$ is the directed path with arc set $A=\{(i,i+1)~|~i\in[n-1]\}$, then 
\begin{equation}\label{eq:exp-path}
n![q^{n}]B_D(q,y,z)=\sum_{\si\in \fS_n}y^{\asc(\si)}z^{\des(\si)},
\end{equation}
where $\asc(\si)$ and $\des(\si)$ are the number of \emph{ascents} ($i\in[n-1]$ such that $\si(i)<\si(i+1)$) and \emph{descents} ($i\in[n-1]$ such that $\si(i)>\si(i+1)$) of the permutation $\si$. 

If $D'=([n],A)$ is the digraph with $i$ copies of the arc $(i,i+1)$ for all $i\in[n-1]$, then 
\begin{equation}\label{eq:exp-thickening-path}
n![q^{n}]B_{D'}(q,y,z)=\sum_{\si\in \fS_n}y^{{n \choose 2}-\maj(\si)}z^{\maj(\si)},
\end{equation}
where $\ds \maj(\si)=\sum_{\substack{i\in[n-1]\\ \si(i)>\si(i+1)}}\!\!\!\!\!\!\!i~$ is the \emph{major index}. By a classical formula (see~\cite[Prop 1.4.6]{Stanley:volume1-ed2}), the right-hand side of~\eqref{eq:exp-thickening-path} is $\prod_{k=1}^{n}\l(\sum_{i=1}^{k} y^{i-1}z^{k-i}\r)$.

Lastly, if $D''=([n],\{(u,v) ~|~ 1\leq u<v\leq n\})$, then 
\begin{equation}\label{eq:exp-Kn}
n![q^{n}]B_{D''}(q,y,z)=\sum_{\si\in \fS_n}y^{{n \choose 2}-\inv(\si)}z^{\inv(\si)},
\end{equation}
where $\inv(\si)$ is the number of \emph{inversions} (pairs $(i,j)\in[n]^2$ with $i<j$ and $\si(i)>\si(j)$). 
By a classical formula (see~\cite[Cor 1.3.13]{Stanley:volume1-ed2}), the right-hand side of~\eqref{eq:exp-Kn} is again $\prod_{k=1}^{n}\l(\sum_{i=1}^{k} y^{i-1}z^{k-i}\r)$.
These formulas will be refined in Section~\ref{sec:quasisym} (see Example~\ref{exp:exp-permut-stats-quasi}). 
\end{example}
 
\fig{width=.9\linewidth}{exp-permut-stats}{Digraphs giving the descent, major index, and inversion statistics of permutations.}

Next, we prove Theorem~\ref{thm:Bpoly-Tutte}. Recall that the \emph{Tutte polynomial} of a graph $G=(V,E)$ is 
\begin{equation}\label{eq:def-Tutte}
T_G(x,y) = \sum_{S\subseteq E} (x-1)^{\comp(S)-\comp(E)}(y-1)^{|S|+\comp(S)-|V|},
\end{equation}
where the sum is over all subsets $S$ of edges, and $\comp(S)$ is the number of connected components of the subgraph $(V,S)$.
We refer the reader to~\cite{Welsh:PottsTutte} or~\cite[Chapter 10]{Bollobas:Tutte-poly} for an introduction to the theory of the Tutte polynomial. 
As shown by Fortuin and Kasteleyn~\cite{Fortuin:Tutte=Potts}, the Tutte polynomial is equivalent to the \emph{partition function of the Potts model on $G$}, or \emph{Potts polynomial of $G$} for short.
The \emph{Potts polynomial of $G=(V,E)$} is the unique bivariate polynomial $P_G(q,y)$ such that for all positive integers~$q$, 
\begin{equation}\label{eq:def-Potts}
P_G(q,y)= \sum_{f:V\to [q]}y^{|f_E^{\neq}|},
\end{equation}
where $f_E^{\neq}$ represents the set of edges with endpoints of different colors\footnote{In the literature the Potts polynomial of $G$ is more often defined as $ \widetilde{P}_G(q,y)=\sum_{f:V\to [q]}y^{|f_E^=|}$. This is equivalent to the convention in the present paper via the relation $P(q,y)=y^{|E|}\widetilde{P}_G(q,1/y)$.}. 
Indeed, the Tutte polynomial $T_G$ is related to $P_G$ by the following change of variables:
\begin{equation}\label{eq:Potts=Tutte}
T_G(x,y) = \frac{y^{|E|}}{(y-1)^{|V|}(x-1)^{\comp(G)}}P_G((x-1)(y-1),1/y).
\end{equation}

We now prove the first equation in Theorem~\ref{thm:Bpoly-Tutte} showing that the $B$-polynomial generalizes the Potts function of graphs (or equivalently the Tutte polynomial of graphs). 
\begin{prop}\label{PottsOne}
Let $G$ be a graph, and let $\orient{G}$ be the corresponding digraph. 
Then, 
\begin{equation*}
\B_{\orient{G}}(q,y,z) = P_G(q,yz).
\end{equation*}
\end{prop}
\begin{proof} We write $G=(V,E)$ and $\orient{G}=(V,A)$. 
We need to prove that for any positive integer~$q$,
$$\ds \sum_{f:V\to [q]}y^{\l|f_A^{\As}\r|}\,z^{\l|f_A^{\Ds}\r|}=\sum_{f:V\to [q]}(yz)^{\l|f_E^{\neq}\r|}.$$
where $f_E^{\neq}=\{\{u,v\}\in E~|~ f(u)\neq f(v)\}$. 
Hence, it suffices to prove that for any $q$-coloring $f$, 
\begin{equation}\label{eq:proof-B=Potts}
y^{\l|f_A^{\As}\r|}\,z^{\l|f_A^{\Ds}\r|}=(yz)^{\l|f_E^{\neq}\r|}.
\end{equation}
Consider an edge $e=\{u,v\}$ of $G$ and the corresponding arcs $a=(u,v)$ and $b=(v,u)$ of $D$. If $u$ and $v$ have distinct colors, then one of the arcs $a,b$ will contribute a factor of $y$ to the left-hand side of~\eqref{eq:proof-B=Potts}, whereas the other will contribute $z$. Thus the arcs $a,b$ contribute a factor $yz$, as does~$e$. On the other hand, if $u$ and $v$ have the same color, then the arcs $a,b$ contribute $1$, as does~$e$. 
\end{proof}

We now prove the second relation of Theorem~\ref{thm:Bpoly-Tutte} showing that the Potts polynomial of a graph $G$ is equivalent to the average of the $B$-polynomial over the orientations of $G$.
\begin{prop}\label{PottsTwo}
Let $G=(V,E)$ be a graph. Then,
\begin{equation*}
\frac{1}{2^{|E|}}\sum_{\vec{G}\in\ori(G)} \B_{\vec{G}}(q,y,z)
= \Potts_G\left(q,\frac{y+z}{2}\right).
\end{equation*}
\end{prop}
\begin{proof}
For any positive integer $q$ we can write
$$\sum_{\vec{G}\in \ori(G)} B_D(q,y,z)=\sum_{f:V\to[q]}~\sum_{\vec{G}=(V,A)\in \ori(G)}y^{|f_A^{\As}|}\,z^{|f_A^{\Ds}|}.$$
Moreover, for any $q$-coloring $f$, 
$$\sum_{\vec{G}=(V,A)\in \ori(G)}y^{|f_A^{\As}|}\,z^{|f_A^{\Ds}|}=\sum_{\vec{G}=(V,A)\in \ori(G)}~\prod_{(u,v)\in A}w_f(u,v)=\prod_{\{u,v\}\in E}\l(w_f(u,v)+w_f(v,u)\r),$$
where $w_f(u,v)=y$ if $f(v)>f(u)$, $w_f(u,v)=z$ if $f(v)<f(u)$, and $w_f(u,v)=1$ if $f(v)=f(u)$.
Consider an edge $e=\{u,v\}$ of $G$ and the corresponding arcs $a=(u,v)$ and $b=(v,u)$ of $D$. If $u$ and $v$ have different colors then $w_f(u,v)+w_f(v,u)=(y+z)$, whereas if $u$ and $v$ have the same color then $w_f(u,v)+w_f(v,u)=2$. Thus,
$$
\sum_{\vec{G}} B_D(q,y,z)=\sum_{f:V\to[q]}(y+z)^{\#\{u,v\}\in E,\, f(v)\neq f(u)}\,2^{\#\{u,v\}\in E,\, f(v)= f(u)}=2^{|E|}\Potts_G\left(q,\frac{y+z}{2}\right),
$$
as desired.
\end{proof}

Next we prove the third equation in Theorem~\ref{thm:Bpoly-Tutte}. 
\begin{prop}\label{PottsThree}
Let $D=(V,A)$ be a digraph, and let $\uD=(V,E)$ be the underlying graph. Then, $$\B_D(q,y,y) = \Potts_\uD(q,y).$$
\end{prop}

\begin{proof}
For any positive integer $q$,
$$B_D(q,y,y)=\sum_{f:V\to[q]} y^{|f_A^{\As}|+|f_A^{\Ds}|}=\sum_{f:V\to[q]} y^{|f_A^{\neq}|}=P_\uD(q,y),$$
as desired.
\end{proof}

\begin{remark} \label{rk:PottsThreedetects} 
Proposition~\ref{PottsThree} implies that the polynomial $B_D(q,y,z)$ contains all the information given by the Tutte polynomial of the underlying graph $\uD$ (assuming one knows $|A|$).
For instance, the number of spanning trees of $\uD$ can be read off $B_D(q,y,z)$. However, it is clear that $B_D(q,y,z)$ contains more information than $T_\uD(x,y)$, as Proposition~\ref{prop:easy}\eqref{it12} already exemplifies.
\end{remark}

Note that the third variable $z$ is ``redundant'' in the relations between $B$-polynomial and the Potts polynomial given in~\eqref{eq:PottsOne}, and~\eqref{eq:PottsTwo}. Indeed, for any graph $G=(V,E)$ we get 
\begin{equation}\label{eq:By-Potts}
P_G(q,y)=\B_{\orient{G}}(q,y,1)=\frac{1}{2^{|E|}}\sum_{\vec{G}\in\ori(G)}\B_{\vec{G}}(q,2y-1,1).
\end{equation}

We now introduce two variants of $B_D(q,y,1)$ which will be convenient in order to state relations between the $B$-polynomial and the Tutte polynomial. For this, we adopt a different perspective on digraphs, by seeing them as \emph{mixed graphs} (a.k.a. \emph{partially oriented graphs}). A \emph{mixed graph} is a graph with oriented edges and unoriented edges. Equivalently, \emph{mixed graphs} are digraphs $D=(V,A)$ together with an \emph{arc-partition} $E(D)$ of the arc-set $A$ into singletons called \emph{oriented edges}, and doubletons made of two opposite arcs called \emph{unoriented edges}. In particular, any graph $G$ identifies with the mixed graph $\orient{G}$ without oriented edges. A \emph{complete orientation} of a mixed graph $D$, is a digraph obtained by choosing a direction for each unoriented edge (i.e. replacing each doubleton in $E(D)$ by a singleton). 
We denote by $\ori(D)$ the set of complete orientations of~$D$. 

\begin{defn}
Let $D=(V,A)$ be a mixed graph with arc-partition $E(D)$. We define 
\begin{equation}\label{eq:defT1}
T_D^{(1)}(x,y) \defeq \frac{y^{|E(D)|}}{(y-1)^{|V|}(x-1)^{\comp(D)}} B_D\left((x-1)(y-1),\frac{1}{y},1\right),
\end{equation}
and
\begin{equation}\label{eq:defT2}
T_D^{(2)}(x,y) \defeq \frac{y^{|E(D)|}}{2^{|E(D)|}(y-1)^{|V|}(x-1)^{\comp(D)}}\sum_{\vec{D} \in \ori(D)} B_{\vec D}\left((x-1)(y-1),\frac{2-y}{y},1\right).
\end{equation}
\end{defn}

\begin{example}\label{exp:exp-intro-Tutte}
For the digraph $D$ with a single arc represented in Figure~\ref{fig:exp-intro} (thought as a mixed graph with no unoriented edges), one finds $T^{(1)}_D(x,y)=(xy+x-y)/2$ and $T^{(2)}_D(x,y)=x/2$. For the digraph $D''$ represented in Figure~\ref{fig:exp-intro} (thought as a mixed graph with one unoriented edges), one finds 
\begin{eqnarray}
T^{(1)}_{D''}(x,y)&=&\frac{{x}^{2}{y}^{3}+2\,{x}^{2}{y}^{2}-2\,x{y}^{3}+2\,{x}^{2}y-x{y}^{2}+{y}^{3}+{x}^{2}-xy-{y}^{2}+x+y}{6},\label{eq:exp-intro-T1}\\
T^{(2)}_{D''}(x,y)&=&\frac{-{x}^{2}{y}^{2}+2\,{x}^{2}y-x{y}^{2}+2\,{x}^{2}+2\,xy+2\,{y}^{2}+2\,x+2\,y}{12}.\label{eq:exp-intro-T2}
\end{eqnarray}
\end{example}

Relations~\eqref{eq:PottsOne} and~\eqref{eq:PottsTwo} can be rewritten as follows via~\eqref{eq:Potts=Tutte}: 
\begin{cor}
For any unoriented graph $G$, $T_{\orient{G}}^{(1)}(x,y) = T_{\orient{G}}^{(2)}(x,y) = T_G(x,y)$.
\end{cor}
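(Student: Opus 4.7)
The plan is to recognize that both identities reduce to formula~\eqref{eq:Potts=Tutte} after a direct substitution, using Propositions~\ref{PottsOne} and~\ref{PottsTwo} respectively. Throughout, observe that since $\orient{G}$ (viewed as a mixed graph) has all its edges unoriented, we have $|E(\orient{G})| = |E(G)|$ and $\comp(\orient{G})=\comp(G)$, and the set of complete orientations $\ori(\orient{G})$ coincides with the set of orientations $\ori(G)$ of the graph $G$.

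For $T^{(1)}_{\orient{G}}$, I would plug the formula of Proposition~\ref{PottsOne} into definition~\eqref{eq:defT1} with $q=(x-1)(y-1)$, $y_B=1/y$, $z_B=1$ (using $y_B, z_B$ to denote the second and third variables of $B$). This yields
\[
B_{\orient{G}}\!\left((x-1)(y-1),\tfrac{1}{y},1\right) = P_G\!\left((x-1)(y-1),\tfrac{1}{y}\right),
\]
and the prefactor in~\eqref{eq:defT1} is exactly the prefactor appearing in the Fortuin--Kasteleyn relation~\eqref{eq:Potts=Tutte}, so one immediately gets $T^{(1)}_{\orient{G}}(x,y)=T_G(x,y)$.

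For $T^{(2)}_{\orient{G}}$, the key computation is the substitution in Proposition~\ref{PottsTwo}. Setting $y_B = \tfrac{2-y}{y}$ and $z_B=1$, one checks
\[
\frac{y_B+z_B}{2} = \frac{1}{2}\!\left(\frac{2-y}{y}+1\right) = \frac{1}{2}\cdot \frac{2}{y} = \frac{1}{y}.
\]
Applying Proposition~\ref{PottsTwo} (with $q=(x-1)(y-1)$) then gives
\[
\frac{1}{2^{|E(G)|}}\sum_{\vec{G}\in\ori(G)} B_{\vec{G}}\!\left((x-1)(y-1),\tfrac{2-y}{y},1\right) = P_G\!\left((x-1)(y-1),\tfrac{1}{y}\right).
\]
Substituting into~\eqref{eq:defT2} and invoking~\eqref{eq:Potts=Tutte} once more yields $T^{(2)}_{\orient{G}}(x,y)=T_G(x,y)$.

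There is no real obstacle: the proof is pure substitution, and the only nontrivial piece is the algebraic identity $\tfrac{1}{2}(\tfrac{2-y}{y}+1)=\tfrac{1}{y}$, which explains why the seemingly awkward substitution $y_B=(2-y)/y$ is exactly the one needed in~\eqref{eq:defT2}. The two identities can be stated as a single short corollary since both chains of substitutions end at the same evaluation $P_G((x-1)(y-1),1/y)$ of the Potts polynomial.
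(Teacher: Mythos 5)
Your proof is correct and is essentially the paper's own argument: the paper derives this corollary by rewriting Propositions~\ref{PottsOne} and~\ref{PottsTwo} via the Fortuin--Kasteleyn relation~\eqref{eq:Potts=Tutte}, which is exactly the chain of substitutions you carry out (including the key computation $\tfrac12(\tfrac{2-y}{y}+1)=\tfrac1y$). Nothing is missing.
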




It is clear from the definition that the invariants $T_D^{(1)}$ and $T_D^{(2)}$ are rational functions, and we will prove later that they are actually \emph{polynomials} in $x$ and $y$ (see Proposition~\ref{prop:polynomiality}). 
We now show that these polynomials coincide for $y=0$ and count \emph{strictly compatible colorings}. Given a mixed graph $D=(V,A)$, we call a $q$-coloring $f$ of $D$ \emph{strictly compatible} if for every unoriented edge the endpoints have different colors, and for every oriented edge the color of the initial vertex is less than the color of the terminal vertex. We denote by $\partialchrom_D(q)$ the number of strictly compatible $q$-colorings of the mixed graph $D$. This invariant was already studied in~\cite{Sotskov:ChromaticMixedGraphs,Sotskov:ChromaticMixedGraphsrecent,Beck:strict-chromatic-poly-digraph}.
Note that for an unoriented graph $G$, the strictly compatible $q$-colorings are the \emph{proper} $q$-colorings, so that $\partialchrom_G(q)$ is the chromatic polynomial, which is known to be related to the Tutte polynomial by $$\ds \partialchrom_G(q)=(-1)^{|V|-\comp(D)}q^{\comp(D)}T_G(1-q,0).$$ 
The following is a generalization of this identity to mixed graphs. 
\begin{prop}\label{prop:chrom-partial}
For any mixed graph $D$,
$$\partialchrom_D(q)~=~(-1)^{|V|-\comp(D)}q^{\comp(D)}T^{(1)}_D(1-q,0)~=~(-1)^{|V|-\comp(D)}q^{\comp(D)}T^{(2)}_D(1-q,0).$$ 
In particular, the polynomials $T_D^{(1)}(x,0)$ and $T_D^{(2)}(x,0)$ are equal, and $\partialchrom_D(q)$ is a polynomial.
\end{prop}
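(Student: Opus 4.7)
The plan is to identify $\partialchrom_D(q)$ with the coefficient of $y^{|E(D)|}$ in $B_D(q,y,1)$, which will simultaneously prove that $\partialchrom_D$ is a polynomial and yield both evaluations by substitution.

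First I would show that for a mixed graph $D$ with arc set $A$, the polynomial $B_D(q,y,1) = \sum_{f : V \to [q]} y^{|f_A^{\As}|}$ has degree at most $|E(D)|$ in $y$: each oriented edge $(u,v) \in E(D)$ contributes $0$ or $1$ to $|f_A^{\As}|$, and each unoriented edge $\{(u,v),(v,u)\} \in E(D)$ contributes $1$ if $f(u) \neq f(v)$ (exactly one of the two opposite arcs ascends) and $0$ otherwise. Writing $B_D(q,y,1) = \sum_{k=0}^{|E(D)|} c_k(q)\,y^k$, the condition $|f_A^{\As}| = |E(D)|$ holds iff every oriented edge has $f(v) > f(u)$ and every unoriented edge is nonmonochromatic, which is precisely the strictly-compatible condition. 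Hence $c_{|E(D)|}(q) = \partialchrom_D(q)$ agrees with a polynomial on every positive integer~$q$, so $\partialchrom_D$ is itself a polynomial.

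For $T_D^{(1)}$, the expression $y^{|E(D)|} B_D(q,1/y,1) = \sum_k c_k(q) y^{|E(D)|-k}$ is a genuine polynomial in $y$ and evaluates at $y=0$ to $c_{|E(D)|}(q) = \partialchrom_D(q)$. Clearing the denominators in the definition~\eqref{eq:defT1} (to avoid a $0/0$) and specializing $y=0$, $x=1-q$ (so that $(x-1)(y-1) = q$), I would deduce
$T_D^{(1)}(1-q,0) \cdot (-1)^{|V|}(-q)^{\comp(D)} = \partialchrom_D(q)$,
which rearranges to the claim since $(-1)^{|V|+\comp(D)} = (-1)^{|V|-\comp(D)}$.

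For $T_D^{(2)}$, the identical degree bound applied to each $\vec{D} \in \ori(D)$ gives $B_{\vec{D}}(q,y,1) = \sum_{k=0}^{|E(D)|} c_k^{\vec{D}}(q)\, y^k$, where $c_k^{\vec{D}}(q)$ counts $q$-colorings $f$ with exactly $k$ of the arcs of $\vec{D}$ ascending. Then $y^{|E(D)|} B_{\vec{D}}(q,(2-y)/y,1) = \sum_k c_k^{\vec{D}}(q)(2-y)^k y^{|E(D)|-k}$ is a polynomial in $y$, whose value at $y=0$ is $2^{|E(D)|} c_{|E(D)|}^{\vec{D}}(q)$. The key combinatorial identity is $\sum_{\vec{D} \in \ori(D)} c_{|E(D)|}^{\vec{D}}(q) = \partialchrom_D(q)$: for any coloring $f$, there exists exactly one $\vec{D}$ under which every arc is an ascent precisely when $f$ is strictly-compatible (direct each unoriented edge from lower to higher color), and none otherwise. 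Canceling the factor $2^{|E(D)|}$ in~\eqref{eq:defT2} then leads to the same identity as for $T_D^{(1)}$. The main obstacle is purely bookkeeping: both $T_D^{(1)}$ and $T_D^{(2)}$ are a priori rational functions with vanishing numerator and denominator at $y=0$, so one must clear $(y-1)^{|V|}(x-1)^{\comp(D)}$ before specializing; the degree bound $|f_A^{\As}|\le |E(D)|$ is exactly what guarantees that this yields an honest polynomial identity whose $y=0$ value extracts $\partialchrom_D(q)$.
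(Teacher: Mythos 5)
Your proposal is correct and follows essentially the same route as the paper: both identify $\partialchrom_D(q)$ with $[y^{|E(D)|}]B_D(q,y,1)$ via the observation that $|f_A^{\As}|\le|E(D)|$ with equality exactly for strictly-compatible colorings, then extract this coefficient from the definitions of $T^{(1)}_D$ and $T^{(2)}_D$, using for the latter that each strictly-compatible coloring of $D$ is strictly compatible with a unique complete orientation. Your extra care about clearing the denominator $(y-1)^{|V|}(x-1)^{\comp(D)}$ before setting $y=0$ is the same step the paper performs implicitly with its $[y^0]$ coefficient-extraction notation.
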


\begin{proof}
It suffices to prove this relation for every  positive integer $q$.
Let $E(D)$ be the arc-partition of $D$ and let $e=|E(D)|$. 
Observe that for any $q$-coloring $f$, one has $|f_A^{\As}|\leq e$, with equality if and only if $f$ is strictly compatible. This is because in any pair of opposite arcs $\{a,-a\}$ in $E(D)$, at most one arc can be in $f_A^{\As}$. 
Thus, 
\begin{eqnarray*}
\partialchrom_D(q)&=&[y^{e}]B_D(q,y,1)=[y^{0}]y^{e}B_D(q,1/y,1)=[y^{0}](y-1)^{|V|-\comp(D)}q^{\comp(D)}T_D^{(1)}\l(\frac{q}{y-1}+1,y\r),\\ 
&=& (-1)^{|V|-\comp(D)}q^{\comp(D)}T_D^{(1)}(1-q,0).
\end{eqnarray*}
Next, we observe that any strictly compatible coloring of $D$ is a strictly compatible coloring of a unique complete orientation $\vec{D}$ of $D$.
Thus, 
$$\partialchrom_D(q)=\sum_{\vec{D}\in\ori(D)}\!\!\! [y^{e}]B_{\vec D}(q,y,1)=[y^{0}]\frac{y^{e}}{2^e}\sum_{\vec{D}\in\ori(D)}\!\!\! B_{\vec D}\l(q,\frac{2-y}{y},1\r)=(-1)^{|V|-\comp(D)}q^{\comp(D)}T_D^{(2)}(1-q,0).$$
\end{proof}

\begin{remark}
We will see that the invariants $T^{(1)}_D$ and $T^{(2)}_D$ share many features with the Tutte polynomial of graphs. 
 Alas, unlike the Tutte polynomial of graphs, the invariants $T^{(1)}_D$ and $T^{(2)}_D$ do not have integer nor positive coefficients in general as can be seen from the above examples.
\end{remark}

\smallskip



\section{Recurrence and the Tutte activities of mixed graphs}\label{sec:recurrence}
In this section, we show that the $B$-polynomial admits a Tutte-like recurrence with respect to \emph{unoriented edges} (that is, pairs of opposite arcs). 
This leads to some notions of partial \emph{Tutte-activities}. However, no proper recurrence relation holds with respect to single arcs, so the recurrence perspective plays a much lesser role for the $B$-polynomial than for the Tutte polynomial.

We first extend the known recurrence relation for the Potts function from the setting of graphs to the general setting of digraphs.
\begin{lemma} \label{lem:rec-edge}
Let $D=(V,A)$ be a digraph. If two opposite arcs $a=(u,v)$ and $-a=(v,u)$ both belong to $A$, then 
\begin{equation}\label{eq:rec-edge} 
\B_D(q,y,z) = (yz)\B_{D_{\setminus e}}(q,y,z) + (1-yz)\B_{D_{/e}}(q,y,z),
\end{equation}
where $e=\{a,-a\}$. Note that in the special case where $u=v$, this gives $\B_D(q,y,z) = \B_{D/e}(q,y,z)$.
\end{lemma}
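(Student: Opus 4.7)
The plan is to prove the identity by splitting the defining sum of $B_D(q,y,z)$ according to whether the two endpoints $u,v$ of $e$ receive the same color. The key observation is that the pair of opposite arcs $\{a,-a\}$ contributes a factor $yz$ whenever $f(u)\neq f(v)$ (one of the two arcs is an ascent, the other a descent), and contributes $1$ whenever $f(u)=f(v)$.

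Concretely, for a positive integer $q$, I would set $A'=A\setminus\{a,-a\}$ and write
\[
B_D(q,y,z)\;=\;\sum_{\substack{f:V\to[q]\\ f(u)=f(v)}} y^{|f_{A'}^{\As}|}z^{|f_{A'}^{\Ds}|}\;+\;yz\sum_{\substack{f:V\to[q]\\ f(u)\neq f(v)}} y^{|f_{A'}^{\As}|}z^{|f_{A'}^{\Ds}|}.
\]
Then I would identify each of the two sums with a $B$-polynomial. For $D_{\setminus e}$ (deletion of both opposite arcs), the $q$-colorings are exactly those of $D$, with the ascent/descent statistics restricted to $A'$; this yields $B_{D_{\setminus e}}(q,y,z)=\sum_{f:V\to[q]} y^{|f_{A'}^{\As}|}z^{|f_{A'}^{\Ds}|}$. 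For $D_{/e}$, contracting identifies $u$ and $v$, so $q$-colorings of $D_{/e}$ are in bijection (preserving the $A'$-statistics) with $q$-colorings $f$ of $D$ satisfying $f(u)=f(v)$, giving
\[
B_{D_{/e}}(q,y,z)\;=\;\sum_{\substack{f:V\to[q]\\ f(u)=f(v)}} y^{|f_{A'}^{\As}|}z^{|f_{A'}^{\Ds}|}.
\]
Combining, the first sum is $B_{D_{/e}}$ and the second sum is $B_{D_{\setminus e}}-B_{D_{/e}}$, which assembles to
\[
B_D(q,y,z)\;=\;B_{D_{/e}}(q,y,z)+yz\bigl(B_{D_{\setminus e}}(q,y,z)-B_{D_{/e}}(q,y,z)\bigr)\;=\;yz\,B_{D_{\setminus e}}+(1-yz)\,B_{D_{/e}}.
\]
Since this equality of polynomials holds at all positive integers $q$, it is an identity of polynomials.

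The only genuinely delicate point is the contraction convention when $u=v$, i.e.\ when $a$ and $-a$ are loops. In this case the ``identification'' of $u$ and $v$ is vacuous, and removing the loops has no effect on the polynomial by Proposition~\ref{prop:easy}\eqref{it6}, so $B_{D_{/e}}=B_{D_{\setminus e}}=B_D$ and the recurrence degenerates to $B_D=B_{D_{/e}}$, consistent with the statement. I would handle this as a brief remark at the end rather than as a separate case, since no combinatorial step fails. The only obstacle worth flagging is clerical rather than conceptual: making sure the bijection between $q$-colorings of $D$ with $f(u)=f(v)$ and $q$-colorings of $D_{/e}$ respects the ascent and descent statistics on the remaining arcs, which it clearly does because $A'$ is untouched by the contraction.
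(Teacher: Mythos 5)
Your proposal is correct and follows essentially the same route as the paper's proof: both rest on the observation that the pair $\{a,-a\}$ contributes a factor $yz$ when $f(u)\neq f(v)$ and $1$ when $f(u)=f(v)$, and both identify the sum over colorings with $f(u)=f(v)$ with $B_{D_{/e}}$ and the unrestricted sum with $B_{D_{\setminus e}}$. The paper merely packages the case split into the single identity $y^{|f_A^{\As}|}z^{|f_A^{\Ds}|}=\bigl(yz+(1-yz)\ONE_{f(u)=f(v)}\bigr)y^{|f_{A\setminus e}^{\As}|}z^{|f_{A\setminus e}^{\Ds}|}$, which is the same computation.
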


\begin{proof}
Let $q$ be a positive integer. For any $q$-coloring $f$ of $D$, 
$$y^{|f_A^{\As}|}z^{|f_A^{\Ds}|}=\l(yz+(1-yz)\ONE_{f(u)=f(v)}\r) \l(y^{|f_{A\setminus e}^{\As}|}z^{|f_{A\setminus e}^{\Ds}|}\r).$$
Summing over all q-colorings gives~\eqref{eq:rec-edge}. 
\end{proof}


Next, we establish a relation between $B_D$, $B_{D_{\setminus a}}$, $B_{D_{/a}}$, and $B_{D^{-a}}$.
\begin{lemma} \label{lem:rec-arc}
Let $D=(V,A)$ be a digraph. For any arc $a=(u,v)$ in $A$, 
\begin{equation}\label{eq:rec-arc} 
\B_D(q,y,z)+\B_{D^{-a}}(q,y,z) = (y+z)\B_{D\setminus a}(q,y,z) + (2-y+z)\B_{D/a}(q,y,z).
\end{equation}
Note that in the special case where $u=v$, this gives $2\B_D(q,y,z) = 2\B_{D/a}(q,y,z)$.
\end{lemma}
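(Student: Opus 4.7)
The plan is to verify the identity at each positive integer $q$, since both sides are polynomials in $q$ and agreement on infinitely many values forces equality. Fix $q$, set $A' = A \setminus \{a\}$, and for each $q$-coloring $f : V \to [q]$ write $W'(f) := y^{|f_{A'}^{\As}|} z^{|f_{A'}^{\Ds}|}$. The weight of $f$ in $B_D + B_{D^{-a}}$ factors as $W'(f) \cdot (w_a(f) + w_{-a}(f))$, where $w_a(f)$ and $w_{-a}(f)$ are the individual contributions of the arc $a = (u,v)$ inside $D$ and $D^{-a}$ respectively.

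The key case analysis, mirroring the proof of Lemma~\ref{lem:rec-edge}, is the following. If $f(u) = f(v)$, both $w_a(f)$ and $w_{-a}(f)$ equal $1$, contributing $2$. If $f(u) \neq f(v)$, then reversing $a$ swaps ascent and descent: one of $w_a(f), w_{-a}(f)$ equals $y$ and the other equals $z$, contributing $y+z$. Summing over all $q$-colorings yields
\begin{equation*}
B_D(q,y,z) + B_{D^{-a}}(q,y,z) \;=\; 2 \!\!\sum_{f:\, f(u) = f(v)}\!\! W'(f) \;+\; (y+z) \!\!\sum_{f:\, f(u) \neq f(v)}\!\! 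W'(f).
\end{equation*}

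The final step is to identify the two restricted sums with the $B$-polynomials of the deletion and contraction. The full sum $\sum_f W'(f)$ equals $B_{D\setminus a}(q,y,z)$ directly from the definition. The sum over colorings with $f(u) = f(v)$ equals $B_{D/a}(q,y,z)$ via the bijection between $q$-colorings of $D/a$ and $q$-colorings of $D$ that assign a common color to $u$ and $v$; this bijection preserves the ascent/descent statistics on $A'$ because identifying $u$ with $v$ does not alter the origin/target comparisons on any other arc. Substituting $\sum_{f:\, f(u)\neq f(v)} W'(f) = B_{D\setminus a} - B_{D/a}$ and rearranging gives the stated identity.

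I expect essentially no serious obstacle; the proof is a direct parallel of Lemma~\ref{lem:rec-edge}, with the single arc $a$ playing the role previously played by the pair of opposite arcs $\{a,-a\}$. The one minor point requiring care is the degenerate case $u = v$: then $D^{-a} = D$ and $D/a = D \setminus a$ (by Proposition~\ref{prop:easy}\eqref{it6}, the loop contributes nothing to $B_D$), so the identity collapses to the trivial $2B_D = 2B_{D/a}$ noted after the statement.
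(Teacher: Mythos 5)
Your proof is correct and follows essentially the same route as the paper's: a pointwise case analysis on whether $f(u)=f(v)$ (contributing $2$ or $y+z$ respectively), followed by identifying the restricted sums with $B_{D_{/a}}$ and $B_{D_{\setminus a}}$. One remark: your computation, like the paper's own proof, actually yields the coefficient $2-y-z$ on $B_{D_{/a}}$ rather than the $2-y+z$ printed in the statement of the lemma (a typo in the statement), and your own check of the loop case $u=v$ confirms this, since only $2-y-z$ makes the identity collapse to $2B_D=2B_{D_{/a}}$.
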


\begin{proof}
Let $q$ be a positive integer. For any $q$-coloring $f$ of $D$, 
$$y^{\l|f_A^{\As}\r|}z^{\l|f_A^{\Ds}\r|}+y^{\l|f_{A\cup \{-a\} \setminus a}^{\As}\r|}z^{\l|f_{A\cup \{-a\} \setminus a}^{\Ds}\r|}=(y+z+(2-y-z)\ONE_{f(u)=f(v)}) \l(y^{\l|f_{A\setminus a}^{\As}\r|}z^{\l|f_{A\setminus a}^{\Ds}\r|}\r).$$
Summing over all q-colorings gives~\eqref{eq:rec-arc}. 
\end{proof}

Note that, unlike Equation~\eqref{eq:rec-edge}, Equation~\eqref{eq:rec-arc} does not express the $B$-polynomial of $D$ in terms of the $B$-polynomial of digraphs with fewer edges. In fact, the $B$-polynomial is not the only polynomial to satisfy relation~\eqref{eq:rec-arc} (see for instance Section~\ref{sec:B-family}), and there is no proper notion of ``universality'' with respect to this type of recurrence relation. However, as we now explain, it is still possible to use~\eqref{eq:rec-edge} and~\eqref{eq:rec-arc} to define partial notions of \emph{Tutte activities} by focusing on unoriented edges.

Let us first rewrite Lemmas~\ref{lem:rec-edge} and~\ref{lem:rec-arc} for the invariant $T^{(1)}_D$ and $T^{(2)}_D$ of mixed graphs. We call \emph{bridge} of a mixed graph, an edge whose deletion increases the number of connected components.
\begin{cor}\label{cor:rec-Tutte-i}
Let $D$ be a mixed graph, and let $e=\{a,-a\}\in E(D)$ be an unoriented edge. For $i\in\{1,2\}$,
\begin{compactitem}
\item if $e$ is neither a bridge nor a loop, then 
$\ds T_D^{(i)}(x,y)=T_{D_{\setminus e}}^{(i)}(x,y)+T_{D_{/e}}^{(i)}(x,y).$
\item if $e$ is a bridge, then 
$\ds T_D^{(i)}(x,y)=(x-1)T_{D_{\setminus e}}^{(i)}(x,y)+T_{D_{/e}}^{(i)}(x,y).$
\item if $e$ is a loop, then 
$\ds T_D^{(i)}(x,y)=yT_{D_{\setminus e}}^{(i)}(x,y).$
\end{compactitem}
\end{cor}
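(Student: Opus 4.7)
The plan is to deduce both recurrences by translating Lemmas~\ref{lem:rec-edge} and~\ref{lem:rec-arc} through the definitions~\eqref{eq:defT1} and~\eqref{eq:defT2}, and then doing a case analysis on $e$. Recall that for an unoriented edge $e = \{a, -a\}$ that is not a loop one has $|E(D_{\setminus e})| = |E(D_{/e})| = |E(D)| - 1$, $|V(D_{\setminus e})| = |V(D)|$, $|V(D_{/e})| = |V(D)| - 1$, and $\comp(D_{/e}) = \comp(D)$; moreover $\comp(D_{\setminus e}) = \comp(D)$ when $e$ is not a bridge, while $\comp(D_{\setminus e}) = \comp(D) + 1$ when $e$ is a bridge.

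For $T^{(1)}_D$, I would substitute $q \mapsto (x-1)(y-1)$ and specialize the last two arguments of $B$ to $1/y$ and $1$ in Lemma~\ref{lem:rec-edge}, giving
\[
B_D\bigl((x-1)(y-1), \tfrac{1}{y}, 1\bigr) \;=\; \tfrac{1}{y}\, B_{D_{\setminus e}}(\ldots) \;+\; \tfrac{y-1}{y}\, B_{D_{/e}}(\ldots).
\]
Multiplying through by the prefactor $y^{|E(D)|}/\bigl[(y-1)^{|V|}(x-1)^{\comp(D)}\bigr]$ from~\eqref{eq:defT1}, the first right-hand term becomes $y^{|E(D)|-1}/\bigl[(y-1)^{|V|}(x-1)^{\comp(D)}\bigr]\, B_{D_{\setminus e}}(\ldots)$, which equals $(x-1)^{\comp(D_{\setminus e}) - \comp(D)}\, T^{(1)}_{D_{\setminus e}}(x,y)$; the second becomes $y^{|E(D)|-1}/\bigl[(y-1)^{|V|-1}(x-1)^{\comp(D)}\bigr]\, B_{D_{/e}}(\ldots) = T^{(1)}_{D_{/e}}(x,y)$. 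The three cases of the corollary then drop out according to whether $\comp(D_{\setminus e}) - \comp(D)$ is $0$ (non-bridge) or $1$ (bridge). The loop case is handled separately: the degenerate form of Lemma~\ref{lem:rec-edge} gives $B_D = B_{D_{\setminus e}}$ while $|V(D_{\setminus e})| = |V(D)|$ and $\comp(D_{\setminus e}) = \comp(D)$, so the identity $T^{(1)}_D = y\, T^{(1)}_{D_{\setminus e}}$ is immediate from~\eqref{eq:defT1}.

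For $T^{(2)}_D$, Lemma~\ref{lem:rec-edge} does not apply directly to a given $\vec D \in \ori(D)$, since such $\vec D$ contains only one of the two opposite arcs of $e$. I would instead pair up orientations that differ only in the choice at $e$: for each $\vec{D'} \in \ori(D_{\setminus e})$, adding the arc $a$ or $-a$ produces the two orientations of $D$ extending $\vec{D'}$. Applying Lemma~\ref{lem:rec-arc} to $\vec{D'} \cup \{a\}$ with respect to the arc $a$ (using that the correct coefficient of the contraction term is $(2-y-z)$, as the computation $y+z+(2-y-z)\ONE_{f(u)=f(v)}$ in its proof yields) gives
\[
B_{\vec{D'} \cup \{a\}} + B_{\vec{D'} \cup \{-a\}} \;=\; (y+z)\, B_{\vec{D'}} \;+\; (2-y-z)\, B_{\vec{D'}/e},
\]
and the map $\vec{D'} \mapsto \vec{D'}/e$ is a bijection $\ori(D_{\setminus e}) \to \ori(D_{/e})$. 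Specializing $y \mapsto (2-y)/y$ and $z \mapsto 1$ turns the coefficients into $2/y$ and $2(y-1)/y$, so summing over $\vec{D'}$ and multiplying by the prefactor $y^{|E(D)|}/\bigl[2^{|E(D)|}(y-1)^{|V|}(x-1)^{\comp(D)}\bigr]$ from~\eqref{eq:defT2} produces exactly the same structure as for $T^{(1)}$. The three cases follow from identical bookkeeping of $|E|$, $|V|$, and $\comp$; the loop subcase again reduces to $T^{(2)}_D = y\, T^{(2)}_{D_{\setminus e}}$ via Proposition~\ref{prop:easy}\eqref{it6}.

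The main obstacle is pure bookkeeping: one must verify that the factors of $y$, $(y-1)$, and (for $T^{(2)}$) $2^{|E(D)|}$ rearrange correctly so that the right-hand sides become honest evaluations of $T^{(i)}_{D_{\setminus e}}$ and $T^{(i)}_{D_{/e}}$, and that the case analysis on $e$ produces exactly the advertised coefficients. The loop subcase is slightly idiosyncratic because $D_{/e} = D_{\setminus e}$ and $|V|$ does not decrease, but both $T^{(1)}$ and $T^{(2)}$ collapse directly to the expected form via Proposition~\ref{prop:easy}\eqref{it6}.
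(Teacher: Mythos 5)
Your proof is correct and follows exactly the route the paper intends (the paper simply declares the corollary ``immediate'' from Lemmas~\ref{lem:rec-edge} and~\ref{lem:rec-arc}): specialize the variables as in~\eqref{eq:defT1} and~\eqref{eq:defT2}, pair up the orientations at $e$ for the $T^{(2)}$ case, and track how $|E|$, $|V|$, and $\comp$ change under deletion and contraction. Your observation that the coefficient of the contraction term in Lemma~\ref{lem:rec-arc} must be $(2-y-z)$ (as its proof shows, rather than the $(2-y+z)$ appearing in the lemma's statement) is a correct reading of a typo, and all the bookkeeping, including the loop subcase, checks out.
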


The proof of Corollary~\ref{cor:rec-Tutte-i} for $i=1$ (resp. $i=2$) is immediate from Lemmas~\ref{lem:rec-edge} (resp. Lemma~\ref{lem:rec-arc}).
We now give expressions for $T^{(1)}_D$ and $T^{(2)}_D$ obtained by iterating Corollary~\ref{cor:rec-Tutte-i} on the set of unoriented edges.

\begin{prop}\label{prop:Tutte-subgraphs}
Let $D=(V,A)$ be a mixed graph, and let $H$ be the set of unoriented edges. Then for $i\in\{1,2\}$,
$$T^{(i)}_D(x,y) = \sum_{R\uplus S=H}(x-1)^{\comp(D_{\del R})-\comp(D)}(y-1)^{|S|+\comp(V,S)-|V|}T_{D_{\del R /S}}^{(i)}(x,y),$$
where the sum is over all partitions of $H$ into two disjoint sets $R,S$ (there are $2^{|H|}$ summands), and $\comp(V,S)$ is the number of connected components of the graph $(V,S)$.
\end{prop}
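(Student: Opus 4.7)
The plan is to induct on the number $|H|$ of unoriented edges, with Corollary~\ref{cor:rec-Tutte-i} serving as the three-case recurrence. The base case $|H|=0$ is immediate: only the partition $R=S=\emptyset$ contributes, both exponents of $(x-1)$ and $(y-1)$ vanish, and the identity reduces to $T_D^{(i)}=T_D^{(i)}$.

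For the inductive step, I would fix any unoriented edge $e\in H$ and split the right-hand side according to whether $e\in R$ or $e\in S$, writing $R=R'\cup\{e\},\,S=S'$ in the first case and $R=R',\,S=S'\cup\{e\}$ in the second, with $R'\uplus S'=H\setminus\{e\}$ in both. The digraph factors line up via the commutation identities $(D_{\setminus e})_{\setminus R'/S'}=D_{\setminus R/S}=(D_{/e})_{\setminus R'/S'}$, so the two partial sums are candidates for $T_{D_{\setminus e}}^{(i)}$ and $T_{D_{/e}}^{(i)}$ as expanded by the induction hypothesis, each applied to a mixed graph with $|H|-1$ unoriented edges.

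The exponent matching is short, relying on three easy facts: (i) when $e$ is not a loop, $\comp(D_{/e})=\comp(D)$; (ii) $\comp(D_{\setminus e})=\comp(D)+\ONE_{e\text{ bridge}}$; and (iii) $\comp(D_{\setminus R'/e})=\comp(D_{\setminus R'})$, which holds because $e\notin R'$ keeps $e$ present in $D_{\setminus R'}$, so it joins its endpoints within a single component. Taken together these align the $(x-1)$ exponents and absorb an extra factor $(x-1)$ into the bridge clause of Corollary~\ref{cor:rec-Tutte-i}. On the $(y-1)$ side, contracting the non-loop edge $e$ in $(V,S'\cup\{e\})$ preserves components while decreasing the vertex count by one, giving $|S'|+1+\comp(V,S'\cup\{e\})-|V|=|S'|+\comp(V',S')-|V'|$ with $V'$ the vertex set of $D_{/e}$. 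Gathering terms, the $e\in R$ and $e\in S$ halves respectively sum to $T_{D_{\setminus e}}^{(i)}$ (with an additional $(x-1)$ if $e$ is a bridge) and $T_{D_{/e}}^{(i)}$, recovering the first two clauses of Corollary~\ref{cor:rec-Tutte-i}.

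The loop case is the only place with real bookkeeping, and is where I expect the (mild) main obstacle. When $e$ is a loop we have $D_{/e}=D_{\setminus e}$, and the loop contributes nothing to $\comp(V,S)$ or to $\comp$ under deletion while incrementing $|S|$ by one. Consequently the $e\in R$ contribution equals $T_{D_{\setminus e}}^{(i)}$ and the $e\in S$ contribution equals $(y-1)T_{D_{\setminus e}}^{(i)}$, and the decomposition $y=1+(y-1)$ is exactly what reproduces the loop clause $T_D^{(i)}=y\,T_{D_{\setminus e}}^{(i)}$ of Corollary~\ref{cor:rec-Tutte-i}. Once this telescoping is verified, the induction closes and the proposition follows.
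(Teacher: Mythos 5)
Your induction is correct, and it is essentially the paper's own argument in a slightly different dress: the paper iterates Corollary~\ref{cor:rec-Tutte-i} over all of $H$ at once (organized as a computation tree) and identifies the accumulated exponents by the same telescoping facts about $\comp$ that you verify one edge at a time, and it handles loops with exactly your decomposition $y=1+(y-1)$, i.e.\ the identity $T_D^{(i)}=T_{D_{\setminus e}}^{(i)}+(y-1)T_{D_{/e}}^{(i)}$. No gap; the exponent bookkeeping in your non-loop and loop cases checks out.
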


\begin{proof}
We apply the recurrence relation of of Corollary~\ref{cor:rec-Tutte-i} successively on every edge of $H$ in an arbitrary order, with the following twist: when applying the recurrence on a loop $e$ we use the identity 
\begin{equation}\label{eq:rec-loop-splited}
T_D^{(i)}(x,y)=T_{D_{\setminus e}}^{(i)}(x,y)+(y-1)T_{D_{/e}}^{(i)}(x,y).
\end{equation}
In Figure~\ref{fig:comput-tree}, we represent the process of applying this recurrence successively on every edge of $H$ by a \emph{computation tree}. The root of the computation tree is $D$, and the leaves of the computation tree correspond to all the digraphs $T_{D_{\del R /S}}^{(i)}(x,y)$ with $R\uplus S=H$. The recurrence gives 
$$T^{(i)}_D(x,y)=\sum_{R\uplus S=H}(x-1)^{\al(R,S)}(y-1)^{\be(R,S)}T_{D_{\del R /S}}^{(i)}(x,y),$$
where $\al(R,S)$ is the number of bridges deleted during the deletion-contraction process leading from $D$ to $D_{\del R /S}$, and $\be(R,S)$ is the number of loops contracted during this process. Moreover $\ds \al(R,S)=\comp(D_{\del R})-\comp(D)$ since this number starts at 0 and increases by one exactly when deleting a bridge during the deletion-contraction process (where $R$ represent the set of edges which have been deleted). Similarly $\be(R,S)=|S|+\comp(V,S)-|V|$ because this quantity starts at 0 and increases by one exactly when contracting a loop during the deletion-contraction process (where $S$ represent the set of edges which have been contracted).
\end{proof}

\fig{width=.7\linewidth}{comput-tree}{The computation tree illustrating the computation of $T^{(i)}_D$ using the recurrence relation~\eqref{eq:rec-loop-splited} for loops. In this figure, the complete edges $e,f,g$ are represented by one segment rather than by two opposite arcs.
The factor $(x-1)^{\comp(D_{\del R})-\comp(D)}(y-1)^{|S|+\comp(V,S)-|V|}$ is indicated under each of the digraphs $D_{\del R /S}$ represented at the bottom of the computation tree.}

\begin{prop}\label{prop:Tutte-forests}
Let $D=(V,A)$ be a mixed graph, let $H$ be the set of unoriented edges, and let $\prec$ be a total order on the set $H$. 
Then for $i$ in $\{1,2\}$,
$$T^{(i)}_D(x,y) = \sum_{F\subseteq H \textrm{ forest}} (x-1)^{\comp(S)-\comp(D)}y^{\ext_\prec(F)}T^{(i)}_{D_{\setminus \overline{F}/F}}(x,y),$$
where the sum is over all \emph{forests} (that is, subset $F\subseteq H$ such that the graph $(V,F)$ has no cycles), and for a forest $F$ we write $\overline{F}:=H\setminus F$, and we denote by $\ext_\prec(F)$ the number of edges $e\in\overline{F}$ such that there is a path $P$ in $F$ between the endpoints of $e$ and $e$ is smaller than any edge in $P$ for the order $\prec$. 
\end{prop}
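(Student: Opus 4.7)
The approach is to group the subsets $S\subseteq H$ in the expansion of Proposition~\ref{prop:Tutte-subgraphs} by an associated canonical spanning forest, parallel to the classical derivation of the activity-based Tutte expansion from the rank-nullity expansion.

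For each subset $S\subseteq H$, I would define a canonical forest $F(S)\subseteq S$ by running the greedy algorithm in reverse $\prec$-order: process the edges of $S$ from largest to smallest, and include each one in $F(S)$ unless doing so creates a cycle with the already-included edges. Then $F(S)$ is a spanning forest of $(V,S)$, i.e.\ $(V,F(S))$ and $(V,S)$ have the same connected components.

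The key combinatorial lemma is the following characterization of fibers of $S\mapsto F(S)$. Given a forest $F\subseteq H$, let $\mathcal{E}(F)\subseteq\overline F$ be the set of externally active edges of $F$ (so $|\mathcal{E}(F)|=\ext_\prec(F)$). Then the fiber over $F$ is exactly $\{F\cup X : X\subseteq\mathcal{E}(F)\}$. Indeed, an edge $e\in\overline F$ is rejected by the reverse-greedy algorithm run on $F\cup\{e\}$ precisely when, at the moment $e$ is processed, its endpoints are already connected by the already-included edges of $F\cap\{f : f\succ e\}$; by uniqueness of paths in a forest, this is equivalent to the external-activity condition from the proposition.

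Next I would evaluate the contribution of a single fiber to Proposition~\ref{prop:Tutte-subgraphs}. For $S=F\cup X$ with $F$ a forest and $X\subseteq\mathcal{E}(F)$:
\begin{compactitem}
\item $\comp(D_{\setminus\overline S})=\comp(D_{\setminus\overline F})$, since each edge of $X$ joins two vertices that are already in a common component of $(V,F)$ (by external activity) and therefore adds no new connectivity;
\item $|S|+\comp(V,S)-|V|=|X|$, since a forest satisfies $|F|+\comp(V,F)-|V|=0$ and $(V,S)$ has the same components as $(V,F)$;
\item the mixed graphs $D_{\setminus\overline S/S}$ and $D_{\setminus\overline F/F}$ are in fact \emph{equal}: once $F$ is contracted, each edge of $X$ becomes a loop (its endpoints belong to the same contracted component), and by definition contracting a loop equals deleting it; hence $T^{(i)}_{D_{\setminus\overline S/S}}=T^{(i)}_{D_{\setminus\overline F/F}}$.
\end{compactitem}
Summing over $X\subseteq\mathcal{E}(F)$ then produces a factor $\sum_X (y-1)^{|X|}=y^{|\mathcal{E}(F)|}=y^{\ext_\prec(F)}$, so the total contribution of the fiber over $F$ in Proposition~\ref{prop:Tutte-subgraphs} equals the $F$-summand in Proposition~\ref{prop:Tutte-forests}. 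Summing over all forests $F\subseteq H$ completes the proof.

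The step requiring the most care is the fiber characterization, which is where the total order $\prec$ plays an essential role; verifying both directions reduces to an elementary check using uniqueness of paths in a forest, but must be done carefully, since this is what is ultimately responsible for converting the binomial expansion of $(y-1)^{|X|}$ into the compact factor $y^{\ext_\prec(F)}$.
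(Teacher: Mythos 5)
Your proof is correct, but it takes a genuinely different route from the paper's. The paper proves Proposition~\ref{prop:Tutte-forests} directly from the recurrence of Corollary~\ref{cor:rec-Tutte-i}: it processes the edges of $H$ in decreasing $\prec$-order through a computation tree, using the loop rule $T_D^{(i)}=yT_{D_{\setminus e}}^{(i)}$ so that loops are always deleted (never contracted); the contracted set is then automatically a forest, and the exponent of $y$ counts the edges that have become loops by the time they are processed, which is exactly $\ext_\prec(F)$. You instead take Proposition~\ref{prop:Tutte-subgraphs} as the starting point and group the subsets $S\subseteq H$ into fibers over the canonical forest $F(S)$ produced by the reverse-$\prec$ greedy algorithm; your fiber characterization $\{F\cup X : X\subseteq\mathcal{E}(F)\}$ is the correct one (the induction showing that the greedy run on $F\cup X$ keeps exactly $F$, and the converse via uniqueness of paths in a forest, both go through, including the degenerate cases of loops and parallel edges in $H$), and the three bookkeeping identities $\comp(D_{\setminus\overline S})=\comp(D_{\setminus\overline F})$, $|S|+\comp(V,S)-|V|=|X|$, and $D_{\setminus\overline S/S}=D_{\setminus\overline F/F}$ are all valid, so the binomial sum $\sum_{X}(y-1)^{|X|}=y^{\ext_\prec(F)}$ collapses each fiber to the desired summand. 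What your approach buys is an explicit structural link between the subgraph expansion and the forest expansion (the classical passage from the rank--nullity expansion to the activity expansion), at the cost of depending on Proposition~\ref{prop:Tutte-subgraphs} and of a more delicate combinatorial lemma; the paper's computation-tree argument is more self-contained but leaves that link implicit. One small remark: the exponent of $(x-1)$ in the statement is misprinted as $\comp(S)-\comp(D)$; your computation correctly identifies it as $\comp(D_{\setminus\overline F})-\comp(D)$, which is what the paper's figure caption and proof also use.
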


\fig{width=.55\linewidth}{comput-tree2}{The computation tree illustrating the computation of $T^{(i)}$ using the recurrence relation of Corollary~\ref{cor:rec-Tutte-i} for loops. The order on $H=\{e,f,g\}$ is $e\succ f \succ g$. The digraphs at the bottom of the computation tree are of the form $D_{D_{\setminus \overline{F}/F} }$ with $F\subseteq H$ such that the graph $(V,F)$ has no cycles, and the corresponding factor $(x-1)^{\comp(D_{\del \overline{F}})-\comp(D)}y^{\ext_\prec(F)}$ is indicated under each digraph.
}

\begin{proof}
We apply the recurrence relation of Corollary~\ref{cor:rec-Tutte-i} successively on every edge of $H$ in the decreasing order given by $\prec$ (that is, we start with the largest element). In Figure~\ref{fig:comput-tree2}, we represent the process of applying this recurrence successively on every edge of $H$ by a \emph{computation tree}. 
Since we use the relation $T_D^{(i)}(x,y)=yT_{D_{\setminus e}}^{(i)}(x,y)$ for loops, we are never contracting loops during this process. Hence the set $S\subseteq H$ of edges contracted during this process must be a forest (and this is the only condition on $S$).
Therefore, 
$$T^{(i)}_D(x,y)=\sum_{R\uplus S=H,~S \textrm{ forest}}(x-1)^{\al(R,S)}y^{\ga(R,S)}T_{D_{\del R /S}}^{(i)}(x,y),$$
where $\al(R,S)$ is the number of bridges deleted during the deletion-contraction process leading from $D$ to $D_{\del R /S}$, and $\ga(R,S)$ is the number of loops deleted during this process. 
Moreover, $\ds \al(R,S)=\comp(D_{\del R})-\comp(D)$ since this number starts at 0 and increases by one exactly when deleting a bridge during the deletion-contraction. Similarly, $\ga(S)=\ext_\prec(S)$ because an edge $e$ is deleted as a loop if and only if there is a path of edges in $H$ joining the endpoints of $e$ which is contracted in the deletion-contraction process before the deletion of $e$.
\end{proof}

\begin{remark}
In the case where the mixed graph $D$ has no oriented edges (i.e. $D=\orient{G}$ for a graph $G$), Proposition~\ref{prop:Tutte-subgraphs} becomes the \emph{subgraph expansion}~\eqref{eq:def-Tutte} of the Tutte polynomial:
$$T^{(i)}_D(x,y)=\sum_{S\subseteq E}(x-1)^{\comp(S)-\comp(D)}(y-1)^{|S|+\comp(S)-|V|},$$ 
while Proposition~\ref{prop:Tutte-forests} gives the \emph{forest expansion} of the Tutte polynomial:
$$T^{(i)}_D(x,y)=\sum_{F\subseteq E,\textrm{ forest}}(x-1)^{\comp(S)-\comp(D)}y^{\ext_\prec(F)}.$$
The forest expansion is given with external activities corresponding to a total order of the arcs of $D$. As for the Tutte polynomial of graphs, it would be possible to give a more general definition of ``external activity with respect to a computation tree'' in the sense of Gordon and McMahon~\cite{Gordon:intervals-Tutte-poly}.
It would also be possible to interpolate between Proposition~\ref{prop:Tutte-subgraphs} and Proposition~\ref{prop:Tutte-forests} by mimicking the \emph{generalized activity} construction of Gordon and Traldi~\cite{Gordon:generalized-activities-Tutte-poly}.
\end{remark}

\smallskip


\section{Oriented chromatic polynomials, and their generating functions}\label{sec:expansions}
In this section, we define \emph{strict} and \emph{weak} \emph{chromatic polynomials} for digraphs. We then express the $B$-polynomial in terms of these chromatic polynomials. These expressions will be used repeatedly in the subsequent sections. 

Recall that the \emph{chromatic polynomial} of a graph $G=(V,E)$, denoted by $\chi_G(q)$, is the unique polynomial whose evaluation at any positive integer~$q$ gives the number of proper $q$-colorings of $G$:
$$\chi_G(q) = \l|\{f:V\to [q]~|~ \forall \{u,v\}\in E, f(u)\neq f(v) \}\r|.$$ 
Note that $\chi_G(q) =[y^{|E|}]P_G(q,y)$. We now define some analogous polynomials for digraphs.

\begin{defn}
Let $D=(V,A)$ be a digraph. The \emph{strict-chromatic polynomial} of $D$, denoted by $\chi^{\As}_D(q)$, is the unique polynomial whose evaluation at any positive integer~$q$ gives the number of $q$-colorings of $D$ with only strict ascents:
\begin{equation*}
\chrom_D^{\As}(q) =[y^{|A|}]B_D(q,y,1)= \l|\{f:V\to [q]~|~ \forall (u,v)\in A,~ f(u)<f(v)\}\r|.
\end{equation*}
The \emph{weak-chromatic polynomial} of $D$, denoted by $\chi^{\wAs}_D(q)$, is the unique polynomial whose evaluation at any positive integer~$q$ gives the number of $q$-colorings of $D$ with only weak ascents:
\begin{equation*}
\chrom_D^{\wAs}(q) =B_D(q,0,1)= \l|\{f:V\to [q]~|~ \forall (u,v)\in A,~ f(u)\leq f(v)\}\r|.
\end{equation*}
\end{defn}

\begin{remark}
The weak and strict-chromatic polynomials are very closely related to the \emph{order polynomials} of posets, as defined by Stanley in~\cite[chap. IV]{Stanley:thesis} (see also~\cite{Stanley:chromatic-ordered}). 
Indeed, if $D$ is acyclic, then we can consider the partial ordering of vertices $\preceq_D$, where $u\preceq_D v$ means that there exists a directed path from $u$ to $v$. In this case, our strict and weak-chromatic polynomials $\chi^{\As}_G(q)$, $\chi^{\wAs}_G(q)$ coincide with the weak and strict-order polynomials associated to the poset $(V,\preceq_D)$.
\end{remark}

\begin{remark} \label{rk:inside-out}
Observe that for any graph $G$, 
\begin{equation}\label{eq:inside-out-chromatic}
\chrom_G(q)=\sum_{\vec{G}\in \ori(G)} \chrom^{\As}_{\vec{G}}(q).
\end{equation}
Indeed, for any proper $q$-coloring $f$ of $G$ there is a unique orientation of $G$ such that $f$ has only strict ascents. 
Equation~\eqref{eq:inside-out-chromatic} is one of the keystones in the \emph{inside-out} approach to the chromatic polynomial developed by Beck and Zaslavsky in~\cite{Beck:inside-out-polytopes}.
\end{remark}


\begin{remark}
Because the strict and weak-chromatic polynomials are specializations of the $B$-polynomial, they satisfy the same type of recurrence relations. 
For example, extracting the coefficient of $y^{|A|}$ in~\eqref{eq:rec-arc} gives 
$$\chrom_D^{\As}(q)+\chrom_{D^{-a}}^{\As}(q)=\chrom_{D_{\setminus a}}^{\As}(q) - \chrom_{D_{/a}}^{\As}(q).$$
\end{remark}

We now state the main result of this section.
\begin{thm}\label{thm:expansions}
Let $D=(V,A)$ be a digraph. Then,
\begin{eqnarray}
\sum_{R\uplus S\uplus T= A} y^{|S|}z^{|T|}\chrom_{D^{-T}_{\setminus R}}^{\As}(q)&=& \B_D(q,1+y,1+z),\label{>Delete}\\
\sum_{R\uplus S\uplus T= A} y^{|S|}z^{|T|}\chrom_{D^{-T}_{\setminus R}}^{\wAs}(q)&=&(1+y+z)^{|A|}\B_D\left(q,\frac{1+y}{1+y+z},\frac{1+z}{1+y+z}\right),\label{geqDelete}\\
\sum_{R\uplus S\uplus T = A} y^{|S|}z^{|T|}\chrom_{D^{-T}_{/R}}^{\As}(q) &=&\B_D(q,y,z),\label{>Contract}\\
\sum_{R\uplus S\uplus T = A} y^{|S|}z^{|T|}\chrom_{D^{-T}_{/R}}^{\wAs}(q)&=&(1+y+z)^{|A|}\B_D\left(q,\frac{y}{1+y+z},\frac{z}{1+y+z}\right),\label{geqContract}
\end{eqnarray}
where the sum is over all ways of partitioning the arc set $A$ in three subsets.
\end{thm}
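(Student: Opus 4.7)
The plan is to prove all four identities by the same Fubini-type manipulation: unfold each chromatic polynomial on the left into a sum over $q$-colorings, exchange the two summations so that the coloring is summed last, and then collapse the inner sum over partitions of $A$ into a product over arcs.

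More precisely, fix a positive integer $q$. Expanding $\chi^{\As}_{D^{-T}_{\setminus R}}(q)$, $\chi^{\wAs}_{D^{-T}_{\setminus R}}(q)$, $\chi^{\As}_{D^{-T}_{/R}}(q)$, $\chi^{\wAs}_{D^{-T}_{/R}}(q)$ by their definitions, each left-hand side becomes a sum over pairs $\bigl((R,S,T),f\bigr)$ where $(R,S,T)$ is an ordered partition of $A$ and $f:V\to[q]$ is a coloring satisfying certain local constraints on each arc. I swap the order of summation and, for each fixed $f$, notice that the choice of $(R,S,T)$ factors into an independent choice for every arc $a=(u,v)\in A$, the allowed choices depending only on the sign of $f(v)-f(u)$ (equality, ascent, or descent) and on the case being considered.

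The key bookkeeping is the following per-arc table of allowed destinations and the corresponding weight $y^{\ONE_{a\in S}}z^{\ONE_{a\in T}}$:
\begin{itemize}
\item Case \eqref{>Delete} (strict, deletion): an equality arc must go to $R$ (weight $1$); an ascent arc may go to $R$ or $S$ (weight $1+y$); a descent arc may go to $R$ or $T$ (weight $1+z$).
\item Case \eqref{geqDelete} (weak, deletion): an equality arc may go anywhere (weight $1+y+z$); an ascent or descent arc behaves as in \eqref{>Delete}.
\item Case \eqref{>Contract} (strict, contraction): the arcs of $R$ are forced to be equalities of $f$, those of $S$ to be strict ascents, those of $T$ to be strict descents, so each arc has exactly one allowed destination, with weights $1$, $y$, $z$ respectively.
\item Case \eqref{geqContract} (weak, contraction): as in \eqref{>Contract} except that an equality arc may now go to $R$, $S$, or $T$ (weight $1+y+z$).
\end{itemize}

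Taking the product of these per-arc factors and summing over $f$ yields, in each case, a sum of the form $\sum_{f:V\to[q]} \alpha^{|f_A^=|}\beta^{|f_A^{\As}|}\gamma^{|f_A^{\Ds}|}$ for appropriate $\alpha,\beta,\gamma$. Using $|f_A^=|=|A|-|f_A^{\As}|-|f_A^{\Ds}|$ to factor out $\alpha^{|A|}$ and applying the defining formula \eqref{eq:def-B} of $B_D$ produces the right-hand sides exactly: \eqref{>Delete} with $(\alpha,\beta,\gamma)=(1,1+y,1+z)$, \eqref{geqDelete} with $(1+y+z,1+y,1+z)$, \eqref{>Contract} with $(1,y,z)$, and \eqref{geqContract} with $(1+y+z,y,z)$. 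Since two polynomials in $q$ agreeing on all positive integers are equal, the identities hold as polynomial identities.

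There is no real obstacle beyond careful bookkeeping; the only subtlety worth emphasizing is the correct translation in the contraction case \eqref{>Contract}/\eqref{geqContract}, where a $q$-coloring of $D^{-T}_{/R}$ pulls back to a $q$-coloring of $D$ that is \emph{constant} on every arc of $R$, which is what forces the equality/ascent/descent of $f$ on each arc to match exactly its assigned block of the partition.
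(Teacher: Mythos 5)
Your proposal is correct and is essentially the paper's own argument run in reverse: the paper expands $\B_D(q,y,z)=\sum_f\prod_{(u,v)\in A}\theta(f(u)-f(v))$ using four decompositions of the per-arc weight $\theta$ into three terms (which are exactly your four per-arc tables), expands the product into a sum over ordered partitions $R\uplus S\uplus T=A$, and recognizes the chromatic polynomials, whereas you start from the left-hand side, swap the summations, and collapse the partition sum arc by arc. All of your per-arc bookkeeping, including the pullback of colorings under contraction, matches the paper's computation, so there is nothing to correct.
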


\begin{proof}
For any positive integer $q$, we have 
\begin{equation}
\B_D(q,y,z) = \sum_{f:V\to [q]} \prod_{(u,v)\in A} \theta(f(u)-f(v)).
\end{equation}
where 
$$\ds
\theta(n)=
\begin{cases}
y&\textrm{if } n<0,\\
z& \textrm{if } n>0,\\
1&\textrm{if } n=0.
\end{cases}
$$
We now consider several ways of expressing $\theta$ as a sum of three terms:
\begin{eqnarray}
\theta(n) &=& x_1+x_2 \ONE_{n<0}+x_3\ONE_{n>0}, \textrm{ for } (x_1,x_2,x_3) = (1,y-1,z-1),\label{eq:<delete}\\
\theta(n) &=& x_1+x_2 \ONE_{n\leq 0}+x_3\ONE_{n\geq 0}, \textrm{ for } (x_1,x_2,x_3) = (y+z-1,1-z,1-y),\label{eq:leq-delete}\\
\theta(n) &=& x_1\ONE_{n=0}+x_2 \ONE_{n<0}+x_3\ONE_{n>0}, \textrm{ for } (x_1,x_2,x_3) = (1,y,z),\label{eq:<contract}\\
\theta(n) &=& x_1\ONE_{n=0}+x_2 \ONE_{n\leq 0}+x_3\ONE_{n\geq 0}, \textrm{ for } (x_1,x_2,x_3) = (1-y-z,y,z).\label{eq:leq-contract}
\end{eqnarray}
Each of the expressions (\ref{eq:<delete}-\ref{eq:leq-contract}) gives an equation for $B_D(q,y,z)$ which, after a suitable change of variables, give the equations (\ref{>Delete}-\ref{geqContract}) respectively. For instance, using~\eqref{eq:<delete} gives
\begin{eqnarray*}
\B(q,y,z)&=&\sum_{f:V\to [q]}\prod_{(u,v)\in A}\left(1+(y-1)\ONE_{f(u)<f(u)}+(z-1)\ONE_{f(u)>f(v)}\right),\\
&=&\sum_{f:V\to [q]}\sum_{R\uplus S\uplus T=A}(y-1)^{|S|}(z-1)^{|T|}\ONE_{S\subseteq f_A^{\As},\textrm{ and }T\subseteq f_A^{\Ds}},\\
&=&\sum_{R\uplus S\uplus T=A} (y-1)^{|S|}(z-1)^{|T|}\sum_{f:V\to [q]}\ONE_{S\subseteq f_A^{\As},\textrm{ and }T\subseteq f_A^{\Ds}},\\
&=&\sum_{R\uplus S\uplus T=A} (y-1)^{|S|}(z-1)^{|T|}\chrom_{D^{-T}_{\setminus R}}^{\As}(q).
\end{eqnarray*}
This gives~\eqref{>Delete} by a change of variables. Similarly, using~\eqref{eq:<contract} gives
\begin{eqnarray*}
\B(q,y,z)
&=&\sum_{R\uplus S\uplus T=A} y^{|S|}z^{|T|}\sum_{f:V\to [q]}\ONE_{R\subseteq f_A^=,~S\subseteq f_A^{\As},\textrm{ and }T\subseteq f_A^{\Ds}},\\
&=&\sum_{R\uplus S\uplus T=A} y^{|S|}z^{|T|}\chrom_{D^{-T}_{/R}}^{\As}(q),
\end{eqnarray*}
where in the second line we identify the $q$-colorings of $D$ such that $R\subseteq f_A^=$ with the $q$-colorings of $D_{/R}$.
This gives~\eqref{>Contract} by a change of variables. Equations~\eqref{geqDelete} and~\eqref{geqContract} are obtained similarly.
\end{proof}

Each generating function in Theorem~\ref{thm:expansions} involves two operations among deletion, contraction, and reorientation. As specializations, we can get generating functions involving a single operation. For deletion we get,
\begin{eqnarray}
\sum_{S\subseteq A} y^{|A\setminus S|} \chrom_{D_{\del S}}^{\As}(q)&=&\B_D(q,y+1,1),\label{eq:deletion-expand}\\
\sum_{S\subseteq A} y^{|A\setminus S|} \chrom_{D_{\del S}}^{\wAs} (q)&=&(y+1)^{|A|} \B_D\left(q,\frac{1}{y+1},1\right).\label{eq:deletion-expand2}
\end{eqnarray}
For contraction we get,
\begin{eqnarray}
\sum_{S\subseteq A} y^{|A\setminus S|} \chrom_{D_{/ S}}^{\As}(q)&=&\B_D(q,y,0), \label{eq:expand-contraction}\\
\sum_{S\subseteq A} y^{|A\setminus S|} \chrom_{D_{/S}}^{\wAs} (q)&=&(y+1)^{|A|}\B_D\l(q,\frac{y}{y+1},0\r).\nonumber
\end{eqnarray}
For reorientation we get
\begin{eqnarray*}
\sum_{S\subseteq A} x^{|S|} \chrom_{D^{-S}}^{\As}(q)&=&[y^{|A|}]\B_D(q,y,x y),\\
\sum_{S\subseteq A} x^{|S|} \chrom_{D^{-S}}^{\wAs}(q) &=&(1+x)^{|A|} \B_D\left(q,\frac{1}{1+x},\frac{x}{1+x}\right).
\end{eqnarray*}

Next, we state an easy lemma about the strict-chromatic polynomial, and explore some immediate consequences.
\begin{lemma}\label{lem:chi-non-zero}
Let $D$ be a digraph and let $\ell(D)$ be the maximum number of arcs of a directed path of $D$ (so that $\ell=\infty$ if $D$ contains a directed cycle). Then $\chi_D^{\As}(q)=0$ for any positive integer $q\leq \ell$, and $\chi_D^{\As}(q)>0$ for any positive integer $q> \ell$. In particular, $\chi_D^{\As}\neq 0$ if and only if $D$ is acyclic. 
\end{lemma}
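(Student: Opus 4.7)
The plan is to prove the two halves of the statement directly from the definition of $\chrom_D^{\As}$, then extract the ``in particular'' corollary.

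For the vanishing half, I would argue that if $D$ contains a directed path $v_0,v_1,\dots,v_k$ with $k$ arcs, then any $q$-coloring $f$ with only strict ascents must satisfy $f(v_0)<f(v_1)<\cdots<f(v_k)$, forcing $q\geq k+1$. Applying this to a directed path achieving the maximum length shows $\chrom_D^{\As}(q)=0$ whenever $q\leq \ell(D)$. This argument in particular covers the case where $D$ has a directed cycle: one can traverse the cycle to produce arbitrarily long directed walks, hence there can be no strictly ascending $q$-coloring for any $q$, consistent with $\ell(D)=\infty$.

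For the positivity half, assume $D$ is acyclic and $q>\ell(D)$. The key construction is the ``longest path ending at $v$'' function: for each vertex $v\in V$, let $\lambda(v)$ denote the maximum number of arcs in a directed path of $D$ ending at $v$. Since $D$ is acyclic, $\lambda(v)$ is well-defined and at most $\ell(D)$. For any arc $(u,v)\in A$, any directed path ending at $u$ extends to a directed path ending at $v$ by appending the arc $(u,v)$, giving $\lambda(v)\geq \lambda(u)+1>\lambda(u)$. Hence $f(v):=\lambda(v)+1$ is a $q$-coloring taking values in $[\ell(D)+1]\subseteq [q]$ with only strict ascents, which shows $\chrom_D^{\As}(q)\geq 1>0$.

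For the last assertion, note that $\chrom_D^{\As}$ is a polynomial in $q$, so it is identically zero if and only if it vanishes at every positive integer. If $D$ has a directed cycle, $\ell(D)=\infty$ and the first half gives $\chrom_D^{\As}(q)=0$ for all $q\in\PP$, so $\chrom_D^{\As}\equiv 0$. Conversely, if $D$ is acyclic, the second half gives $\chrom_D^{\As}(\ell(D)+1)>0$, so $\chrom_D^{\As}\not\equiv 0$.

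There is no substantive obstacle here; the only point that requires a moment of care is the definition of $\lambda(v)$ and the observation that acyclicity is exactly what guarantees it is finite (so the recursion terminates). Everything else is bookkeeping directly from the definition $\chrom_D^{\As}(q)=|\{f:V\to[q]\mid f(u)<f(v)\text{ for all }(u,v)\in A\}|$.
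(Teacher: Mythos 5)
Your proof is correct and follows essentially the same route as the paper: the vanishing half is the same observation that colors strictly increase along directed paths, and the positivity half uses the same witness coloring $v\mapsto(\text{length of the longest directed path ending at }v)$, which you merely spell out in more detail (shifting by $1$ to land in $[q]$ and checking $\lambda(v)\geq\lambda(u)+1$ across each arc).
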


\begin{proof}
Let $q$ be a positive integer. If $f$ is a $q$-coloring with only strict ascents, then the colors must be strictly increasing along any directed path. Hence, $\chi_D^{\As}(q)=0$ for all $q\leq \ell$. Conversely, $\chi_D^{\As}(q)>0$ for all $q>\ell$, because the $q$-coloring $f$ which associates to each vertex $v$ the length of the longest directed path of $D$ ending at $v$, has only strict ascents.
\end{proof}

We now show that several quantities about $D$ can be obtained from $B_D(q,y,z)$. For a digraph $D$, we denote by $\acyc(D)$ the (acyclic) digraph obtained from $D$ by contracting all the cyclic arcs. Note that the vertices of $\acyc(D)$ correspond to the \emph{strongly connected components} of~$D$. 
\begin{cor}\label{cor:additional-prop}
Let $D$ be a digraph. 
\begin{compactenum}
\item[(i)] The number $\al$ of acyclic arcs of $D$ is $\deg_y(B_D(q,y,0))$. 
\item[(ii)] The number of strongly connected components of $D$ is $\deg_q([y^{\al}]B_D(q,y,0))$.
\item[(iii)] The maximal length of directed paths in $\acyc(D)$ is $\min\{q\in \PP~|~[y^{\al}]B_D(q,y,0)\neq 0\}$.
\end{compactenum}
\end{cor}

\begin{proof}
Let $\al$ be the number of acyclic arcs of $D$, and let $\be=|A|-\al$ be the number of cyclic arcs. Observe that contracting a cyclic arc of a digraph decreases the number of cyclic arcs by one, whereas contracting an acyclic arc does not decrease the number of cyclic arcs. 
Hence, the minimal number of arcs of $D$ to be contracted in order to obtain an acyclic digraph is $\be$.
Thus,~\eqref{eq:expand-contraction} and Lemma~\ref{lem:chi-non-zero} show that $\al=\deg_y(B_D(q,y,0))$, and 
$$\chi^{\As}_{\acyc(D)}(q)=[y^{\al}]B_D(q,y,0).$$
Moreover, by Lemma~\ref{lem:chi-non-zero}, the maximal length of directed paths in $\acyc(D)$ is $\min\{q\in \PP~|~\chi^{\As}_{\acyc(D)}(q)\neq 0\}$.
This proves (i) and (iii). Lastly, we claim that the number of strongly connected components $\vv(\acyc(D))$ is the degree of $\chi^{\As}_{\acyc(D)}(q)$. Indeed, the same reasoning as in the proof of Proposition~\ref{thm:existence-B} shows that for any digraph $D'=(V',A')$,
$$\chi^{\As}_{D'}(q)=\sum_{p=1}^{|V'|}c_p(D') {q \choose p},$$
where $c_p(D')$ is the number of \emph{surjective} $p$-colorings of $D'$ with only strict ascents. Moreover, when $D'$ is acyclic, $c_q(D')\neq 0$, so that the degree of $\chi^{\As}_{D'}$ is $|V'|$. Since $\acyc(D)$ is acyclic, the degree of $\chi^{\As}_{\acyc(D)}(q)$ is indeed $\vv(\acyc(D))$.
\end{proof}


\smallskip

\section{The $B$-polynomial at negative $q$, via Ehrhart theory}\label{sec:Ehrhart}
In this section, we use Ehrhart theory in order to show that the evaluations of the $B$-polynomial at negative values of $q$ have a combinatorial interpretation in terms of orientations, and to obtain a relation about planar duality.
\subsection{Ehrhart theory and general results} 
We first interpret the polynomials $\chi_D^{\As}(q)$ and $\chi^{\wAs}(q)$ as counting lattice points in a polytope.

\begin{definition}
Let $D=(V,A)$ be a digraph. The \emph{ascent polytope} of $D$ is the polytope $\De_D\subset\RR^{V}$ made of the points $(x_v)_{v\in V}$ such that 
\begin{equation*}
\forall v\in V,~ 0\leq x_v\leq 1, ~\textrm{ and } ~\forall (u,v)\in A,~x_u\leq x_v.
\end{equation*}
\end{definition}

The $q$-\emph{dilation} of a region $\De$ of $\RR^{n}$, is 
$$q\De:=\{(q\,x_1,\ldots,q\,x_n)~|~(x_1,\ldots,x_n)\in \De\}.$$
The \emph{interior} of $\De$ is denoted by $\De^\circ$.
The following lemma relates $\chi_D^{\As}(q)$ and $\chi^{\wAs}(q)$ to the $q$-dilation of the ascent polytope. 

\begin{lemma}\label{lem:chrom-as-latticepoints}
Let $D=(V,A)$ be a digraph with $n$ vertices, and let $\De_D\subset\RR^V$ be its ascent polytope.
Then, for all positive integers $q$, 
\begin{equation}
\chi_D^{\As}(q)=|(q+1)\De_D^\circ\cap\ZZ^V|, ~\textrm{ and } ~\chi_D^{\wAs}(q)=|(q-1)\De_D\cap \ZZ^V|.
\end{equation}
\end{lemma}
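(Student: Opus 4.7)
The plan is to establish each identity by an explicit lattice-point bijection, handling the two chromatic polynomials separately.

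First I would prove the weak version. For any positive integer $q$, a point $y\in\ZZ^V$ lies in $(q-1)\De_D$ exactly when $0\leq y_v\leq q-1$ for all $v$ and $y_u\leq y_v$ for every arc $(u,v)\in A$. The map $\Phi\colon y\mapsto f$ with $f(v)=y_v+1$ then sends such $y$ bijectively to the functions $f\colon V\to[q]$ satisfying $f(u)\leq f(v)$ on arcs, i.e.\ the $q$-colorings counted by $\chi_D^{\wAs}(q)$. The inverse map is $f\mapsto f-1$, and no additional work is needed.

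For the strict version, the same idea works, but the description of $\De_D^\circ$ requires care. I would split into two cases. If $D$ is acyclic, I claim
\[
\De_D^\circ=\{(x_v)_{v\in V}\mid 0<x_v<1,\ x_u<x_v\text{ for all }(u,v)\in A\}.
\]
The inclusion $\supseteq$ is clear by continuity. For $\subseteq$, it suffices to show $\De_D$ is full-dimensional: pick a linear extension $\rho\colon V\to[|V|]$ of the partial order generated by $D$ (available because $D$ is acyclic), and set $x_v=\rho(v)/(|V|+1)$; all defining inequalities are then strict, exhibiting an interior point. Having identified $\De_D^\circ$, the map $\Psi\colon y\mapsto f$ with $f(v)=y_v$ sends $(q+1)\De_D^\circ\cap\ZZ^V$ bijectively to the strict $q$-colorings: the condition $0<y_v<q+1$ with $y_v\in\ZZ$ is equivalent to $y_v\in[q]$, and $y_u<y_v$ on arcs is the strict-coloring condition.

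If $D$ contains a directed cycle $v_1\to v_2\to\cdots\to v_k\to v_1$, both sides vanish. The left side is zero by Lemma \ref{lem:chi-non-zero}. For the right side, the arc constraints force $x_{v_1}\leq x_{v_2}\leq\cdots\leq x_{v_k}\leq x_{v_1}$, so $\De_D$ lies in a proper affine subspace of $\RR^V$ and hence has empty topological interior, giving $(q+1)\De_D^\circ\cap\ZZ^V=\emptyset$.

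The only nontrivial step is verifying the topological description of $\De_D^\circ$ in the acyclic case, and specifically the full-dimensionality of $\De_D$; everything else is bookkeeping on the bijections $y\leftrightarrow y+1$ and $y\leftrightarrow y$. The cyclic case is handled separately but is essentially a degeneracy check that both sides are zero.
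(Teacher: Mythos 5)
Your proof is correct and follows essentially the same route as the paper's: the same two lattice-point bijections $f(v)=x_v+1$ (weak case) and $f(v)=x_v$ (strict case), the only substantive addition being your careful justification, via an interior point built from a linear extension, of the description of $\De_D^\circ$, which the paper's proof simply asserts. One shared blemish worth noting: since the paper permits loops, a loop $(u,u)$ imposes no constraint on $\De_D$, so your cyclic-case degeneracy argument (and indeed the strict half of the lemma itself) fails for a directed cycle of length one --- e.g.\ for a single vertex with a loop one has $\chi_D^{\As}(q)=0$ while $(q+1)\De_D^\circ\cap\ZZ^V$ has $q$ points --- but this is harmless here because the strict identity is only ever invoked for acyclic, hence loopless, digraphs.
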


\begin{proof}
The points $(x_v)_{v\in V}$ in $ (q+1)\De_D^\circ\cap \ZZ^V$ are characterized by 
$$\forall v\in V, x_v\in [q], ~\textrm{ and } ~ \forall (u,v)\in A,~x_u< x_v.$$
Hence they identify with the $q$-colorings $f$ of $D$ with only strict ascents upon setting $f(v)=x_v$ for all $v\in V$. 
Similarly, the points $(x_v)_{v\in V}$ in $(q-1)\De_D \cap \ZZ^V$ identify with the $q$-colorings $f$ of $D$ with only weak ascents upon setting $f(v)=x_v+1$. 
\end{proof}

Let us now recall the main results of Ehrhart theory.

\begin{lemma}[Ehrhart's Theorem and Ehrhart-Macdonald reciprocity]
Let $\Pi\subset \RR^n$ be a polytope with integer vertices. Then, there exists a polynomial $E_\Pi(q)$, called \emph{Ehrhart polynomial} of $\Pi$, such that for any non-negative integer~$q$,
$$E_\Pi(q) = \left| q\Pi\cap \ZZ^n\right|.$$
Moreover, if the interior $\Pi^\circ$ is non-empty, then for any positive integer~$q$, 
\begin{equation}\label{eq:Ehrhart-reciprocity}
E_\Pi(-q) = (-1)^{n}\left|q\Pi^\circ \cap \ZZ^n\right|.
\end{equation}
\end{lemma}

\begin{exmp} Consider the 2-dimensional polytope $\Pi=\{(x,y)\in \RR^2 ~|~x,y\in[0,1]\}\subset \RR^2$. We have $E_\Pi(q) = (q+1)^2$, so that $E_\Pi(-q) =(q-1)^2=\left|q\Pi^\circ \cap \ZZ^2\right|$.
\end{exmp}


We now prove a key lemma, which is a straightforward extension of a result of Stanley about order polynomials~\cite[Theorem 3]{Stanley:chromatic-ordered} (the result~\cite[Theorem 3]{Stanley:chromatic-ordered} is stated in terms of posets and corresponds to \eqref{eq:acyclic-negq}). 

\begin{lemma}\label{lem:reciprocity-chrom}
Let $D=(V,A)$ be a digraph and let $q$ be a positive integer.
Then,
\begin{eqnarray}
\chi_D^{\wAs}(-q)=(-1)^{\vv\l(\acyc(D)\r)}\chi_{\acyc(D)}^{\As}(q), \label{eq:general-negq}
\end{eqnarray}
where $\acyc(D)$ is the (acyclic) digraph obtained from $D$ by contracting all the cyclic arcs.
\end{lemma}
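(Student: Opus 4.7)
The plan is to reduce to the acyclic case and then apply Ehrhart--Macdonald reciprocity to the ascent polytope via Lemma~\ref{lem:chrom-as-latticepoints}.

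First, I would establish the reduction $\chi_D^{\wAs}(q) = \chi_{\acyc(D)}^{\wAs}(q)$ as polynomials. The point is that in any weak $q$-coloring of $D$, if $(u,v)$ lies on a directed cycle $u=u_0\to u_1\to\cdots\to u_k = u_0$, then the weak inequalities $f(u_0)\leq f(u_1)\leq \cdots \leq f(u_k)=f(u_0)$ force equality throughout. Hence every cyclic arc has $f(u)=f(v)$, so weak $q$-colorings of $D$ correspond bijectively (via identification of the endpoints of cyclic arcs) to weak $q$-colorings of $\acyc(D)$. The equality of polynomials follows by agreement on all positive integers $q$.

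Next, set $D' := \acyc(D)$, which is acyclic with $n := \vv(D')$ vertices. I would check that $\Delta_{D'}$ is full-dimensional, hence $\Delta_{D'}^\circ$ is non-empty: any linear extension $v_1,\ldots,v_n$ of the partial order $\preceq_{D'}$ yields the interior point $x_{v_i} = i/(n+1)$, which satisfies $0<x_v<1$ and $x_u<x_v$ strictly for every arc $(u,v)$. Lemma~\ref{lem:chrom-as-latticepoints} then gives, as identities of polynomials in $q$,
\begin{equation*}
\chi_{D'}^{\wAs}(q) = E_{\Delta_{D'}}(q-1), \qquad \chi_{D'}^{\As}(q) = \bigl|(q+1)\Delta_{D'}^\circ\cap\ZZ^{V(D')}\bigr|.
\end{equation*}

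Now I would apply Ehrhart--Macdonald reciprocity~\eqref{eq:Ehrhart-reciprocity} to $\Delta_{D'}$ at the integer $q+1$:
\begin{equation*}
E_{\Delta_{D'}}(-q-1) = (-1)^n \bigl|(q+1)\Delta_{D'}^\circ\cap\ZZ^{V(D')}\bigr| = (-1)^n \chi_{D'}^{\As}(q).
\end{equation*}
On the other hand, substituting $-q$ into the polynomial identity $\chi_{D'}^{\wAs}(q) = E_{\Delta_{D'}}(q-1)$ yields $\chi_{D'}^{\wAs}(-q) = E_{\Delta_{D'}}(-q-1)$. Combining and using the first step,
\begin{equation*}
\chi_D^{\wAs}(-q) = \chi_{D'}^{\wAs}(-q) = (-1)^n \chi_{D'}^{\As}(q) = (-1)^{\vv(\acyc(D))}\chi_{\acyc(D)}^{\As}(q),
\end{equation*}
which is the claimed identity. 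The only subtle point is tracking the shift by one in the Ehrhart reciprocity (explaining why the evaluation at $-q$ on the weak side matches $q$ on the strict side), so the main conceptual obstacle is really just correctly aligning these arguments; everything else is a straightforward assembly of Lemma~\ref{lem:chrom-as-latticepoints} with the Ehrhart--Macdonald theorem.
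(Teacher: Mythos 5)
Your proof is correct and follows essentially the same route as the paper's: reduce to $\acyc(D)$ by observing that weak ascents force constancy along directed cycles, then apply Ehrhart--Macdonald reciprocity to the ascent polytope via Lemma~\ref{lem:chrom-as-latticepoints}, keeping track of the shift by one. The only (harmless) difference is that you verify non-emptiness of $\De_{D'}^\circ$ by exhibiting an explicit interior point from a linear extension, whereas the paper deduces it indirectly from Lemma~\ref{lem:chi-non-zero}.
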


\begin{proof}
First note that for any cyclic arc $a$ of $D$, $\chi_D^{\wAs}=\chi_{D_{/a}}^{\wAs}$. This is because for any positive integer~$q$ and any $q$-coloring $f$ with only weak ascents, all the vertices along a directed cycle of $D$ must have the same color. Therefore, $\chi_D^{\wAs}=\chi_{\acyc(D)}^{\wAs}$. Since $\acyc(D)$ is acyclic, it only remains to prove that for an acyclic digraph $D=(V,A)$,
\begin{equation}\label{eq:acyclic-negq}
\chi_D^{\As}(q)=(-1)^{|V|}\chi_D^{\wAs}(-q).
\end{equation}
Let $D$ be acyclic and let $E_D(q)$ be the Ehrhart polynomial of the ascent polytope $\De_D$. 
By Lemma~\ref{lem:chrom-as-latticepoints}, we have $\chi_D^{\wAs}(q)=|(q-1)\De_D\cap \ZZ^V|=E_D(q-1)$.
Since $D$ acyclic, we know from Lemma~\ref{lem:chi-non-zero} that $\chi_D^{\As}\neq 0$. Hence, by Lemma~\ref{lem:chrom-as-latticepoints}, the ascent polytope $\De_D$ has non-empty interior. Thus, by Ehrhart-Macdonald reciprocity~\eqref{eq:Ehrhart-reciprocity}, 
$$\chi_D^{\As}(q)=|(q+1)\De_D^\circ\cap\ZZ^V|=(-1)^{|V|}E_D(-q-1)=(-1)^{|V|}\chi_D^{\wAs}(-q).$$
\end{proof}

We now state the key result of this section, which expresses various generating functions of weak and strict-chromatic polynomials in terms of the $B$-polynomial.

\begin{thm}\label{thm:expansions-q}
Let $D=(V,A)$ be a digraph. Then, 
\begin{eqnarray}
\sum_{\substack{R\uplus S\uplus T= A \\ D^{-T}_{\setminus R}\textrm{ acyclic}}} y^{|S|}z^{|T|}\chrom_{D^{-T}_{\setminus R}}^{\wAs}(q)&\!\!=\!\!&(-1)^{|V|} \B_D(-q,1+y,1+z),\label{eq:q-1}\\
\sum_{R\uplus S\uplus T= A} y^{|S|}z^{|T|}(-1)^{\vv\l(\acyc\l(D^{-T}_{\setminus R}\r)\r)}\chrom_{\acyc\l(D^{-T}_{\setminus R}\r)}^{\As}(q)&\!\!=\!\!&(1+y+z)^{|A|}\B_D\left(-q,\frac{1+y}{1+y+z},\frac{1+z}{1+y+z}\right),\quad\quad\label{eq:q-2}\\
\sum_{\substack{R\uplus S\uplus T = A \\ D^{-T}_{/R}\textrm{ acyclic}}} y^{|S|}z^{|T|}(-1)^{\vv\l(D^{-T}_{/R}\r)}\chrom_{D^{-T}_{/R}}^{\wAs}(q) &\!\!=\!\!&\B_D(-q,y,z),\label{eq:q-3}\\
\sum_{R\uplus S\uplus T = A} y^{|S|}z^{|T|}(-1)^{\vv\l(\acyc\l(D^{-T}_{/R}\r)\r)}\chrom_{\acyc\l(D^{-T}_{/R}\r)}^{\As}(q)&\!\!=\!\!&(1+y+z)^{|A|}\B_D\left(-q,\frac{y}{1+y+z},\frac{z}{1+y+z}\right).\quad\quad\label{eq:q-4}
\end{eqnarray}
\end{thm}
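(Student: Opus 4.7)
The plan is to derive each of the four identities in Theorem~\ref{thm:expansions-q} directly from the corresponding identity in Theorem~\ref{thm:expansions} by substituting $q \mapsto -q$ and then applying the chromatic reciprocity encoded in Lemma~\ref{lem:reciprocity-chrom}. Both the left- and right-hand sides of the identities in Theorem~\ref{thm:expansions} are polynomials in $q$, so the substitution $q \mapsto -q$ is legitimate and preserves equality.

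For \eqref{eq:q-1}, I would start from \eqref{>Delete} with $q$ replaced by $-q$. On the left-hand side, $\chi^{\As}_{D^{-T}_{\setminus R}}$ is the zero polynomial whenever $D^{-T}_{\setminus R}$ is not acyclic (by Lemma~\ref{lem:chi-non-zero}), so the sum effectively restricts to acyclic $D^{-T}_{\setminus R}$. For such digraphs, identity \eqref{eq:acyclic-negq} from the proof of Lemma~\ref{lem:reciprocity-chrom} gives $\chi^{\As}_{D^{-T}_{\setminus R}}(-q) = (-1)^{|V|}\chi^{\wAs}_{D^{-T}_{\setminus R}}(q)$ (note that $D^{-T}_{\setminus R}$ has the same vertex set $V$ as $D$). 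Pulling the sign $(-1)^{|V|}$ out of the sum and moving it to the other side yields \eqref{eq:q-1}. The derivation of \eqref{eq:q-3} from \eqref{>Contract} is completely parallel, using instead that $D^{-T}_{/R}$ has vertex set of size $\vv(D^{-T}_{/R})$ and restricting the sum to acyclic $D^{-T}_{/R}$.

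For \eqref{eq:q-2}, I would start from \eqref{geqDelete} with $q$ replaced by $-q$. This time the left-hand side involves $\chi^{\wAs}_{D^{-T}_{\setminus R}}(-q)$, and I can apply Lemma~\ref{lem:reciprocity-chrom} directly (no acyclicity restriction is needed, since the formula handles arbitrary digraphs) to rewrite this as
\[
\chi^{\wAs}_{D^{-T}_{\setminus R}}(-q) = (-1)^{\vv(\acyc(D^{-T}_{\setminus R}))}\,\chi^{\As}_{\acyc(D^{-T}_{\setminus R})}(q).
\]
Substituting this into the sum yields \eqref{eq:q-2}. The derivation of \eqref{eq:q-4} from \eqref{geqContract} is entirely analogous.

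There is no real obstacle here beyond bookkeeping: the proof is a four-fold repetition of the same substitution/reciprocity argument. The only subtlety to watch is the distinction between the two forms of reciprocity used — equation \eqref{eq:acyclic-negq} (which converts $\chi^{\As}(-q)$ into $\chi^{\wAs}(q)$ and requires acyclicity to avoid the trivial $0=0$ case) versus the general form of Lemma~\ref{lem:reciprocity-chrom} (which converts $\chi^{\wAs}(-q)$ into $\chi^{\As}_{\acyc(\cdot)}(q)$ and works for all digraphs). Choosing the right one for each identity, and carefully tracking the exponent of $-1$ (either $|V|$ or $\vv(\acyc(\cdot))$), is all that is required.
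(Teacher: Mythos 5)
Your proof is correct and is essentially the paper's own argument: the paper's proof of Theorem~\ref{thm:expansions-q} is the single sentence that the four identities follow from the four expansions of Theorem~\ref{thm:expansions} by applying Lemma~\ref{lem:reciprocity-chrom}, and your write-up supplies exactly the bookkeeping this entails (polynomiality in $q$ justifying the substitution $q\mapsto -q$, the restriction to acyclic terms via Lemma~\ref{lem:chi-non-zero} in \eqref{eq:q-1} and \eqref{eq:q-3}, and the correct sign exponent $|V|$ versus $\vv(D^{-T}_{/R})$ versus $\vv(\acyc(\cdot))$ in each case).
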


\begin{proof}
Equations (\ref{eq:q-1}-\ref{eq:q-4}) follow respectively from Equations (\ref{eq:<delete}-\ref{eq:leq-contract}) by applying Lemma~\ref{lem:reciprocity-chrom}.
\end{proof}

\begin{remark}\label{rk:Potts-compatible-colorings}
Note that \eqref{eq:q-1} is particularly interesting from a combinatorial point of view because its left-hand side has no  negative signs.
We now point out a relation between this equation and the seminal result given by Stanley in \cite{Stanley:acyclic-orientations} about the chromatic polynomial of a graph. 
The result in \cite{Stanley:acyclic-orientations} can be stated as follows: for a graph $G$ and for a positive integer~$q$, the evaluation $(-1)^{|V|}\chi_G(-q)$ gives the number of pairs $(D,f)$, where $D$ is an acyclic orientation of $G$ and $f$ is a $q$-coloring of $D$ without strict descent. With our notation, this can be written as 
\begin{equation}\label{eq:Stanley}
\sum_{D\in\ori(G)}\chi_D^{\geq}(q)=(-1)^{|V|}\chi_G(-q).
\end{equation}
One of the ways to recover \eqref{eq:Stanley} from  \eqref{eq:q-1} is to set $z=y$ in \eqref{eq:q-1} and use \eqref{eq:PottsThree}. This gives
\begin{equation}\label{eq:PStanley}
\sum_{\substack{R\uplus S\uplus T= A \\ D^{-T}_{\setminus R}\textrm{ acyclic}}} y^{|A\setminus R|}=(-1)^{|V|} P_G(-q,1+y),
\end{equation}
where $G=\uD$. Extracting the coefficient of $y^{|A|}$ gives \eqref{eq:Stanley} (since $\chi_G(-q)=[y^{|A|}]P_G(-q,y)$). Note that \eqref{eq:PStanley} actually gives an interpretation for each coefficient of  $P_G(-q,1+y)$.
\end{remark}


\subsection{Evaluation at $q=-1$ and generating functions for acyclic and totally cyclic deformations of $D$.}
Specializing Theorem~\ref{thm:expansions-q} to $q=1$ shows that $B_D(-1,y,z)$ contains several generating functions for the acyclic and totally cyclic digraphs obtained from $D$ by deletions, contractions, or reorientations.

\begin{thm}\label{thm:q=-1}
Let $D=(V,A)$ be a digraph. Then, 
\begin{eqnarray}
\sum_{\substack{R\uplus S\uplus T= A \\ D^{-T}_{\setminus R}\textrm{ acyclic}}}\!\!\! y^{|S|}z^{|T|}&\!\!=\!\!&(-1)^{|V|} \B_D(-1,1+y,1+z),\label{eq:q1-1}\\
\sum_{\substack{R\uplus S\uplus T= A\\ D^{-T}_{\setminus R}\textrm{ totally cyclic}}}\!\!\! y^{|S|}z^{|T|}(-1)^{\comp\l(D^{-T}_{\setminus R}\r)}&\!\!=\!\!&(1+y+z)^{|A|}\B_D\left(-1,\frac{1+y}{1+y+z},\frac{1+z}{1+y+z}\right),\label{eq:q1-2}\\
\sum_{\substack{R\uplus S\uplus T = A \\ D^{-T}_{/R}\textrm{ acyclic}}}\!\!\! y^{|S|}z^{|T|}(-1)^{\vv\l(D^{-T}_{/R}\r)} &\!\!=\!\!&\B_D(-1,y,z),\label{eq:q1-3}\\
\sum_{\substack{R\uplus S\uplus T = A \\ D^{-T}_{/R}\textrm{ totally cyclic}}}\!\!\! y^{|S|}z^{|T|}&\!\!=\!\!&(-1)^{\comp(D)}(1+y+z)^{|A|}\B_D\left(-1,\frac{y}{1+y+z},\frac{z}{1+y+z}\right).
\quad\label{eq:q1-4}
\end{eqnarray}
\end{thm}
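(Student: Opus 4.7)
The plan is to obtain Theorem~\ref{thm:q=-1} as the specialization $q=1$ of Theorem~\ref{thm:expansions-q}, using two elementary evaluations of the chromatic polynomials at $1$. First, for any digraph $D'$ the unique $1$-coloring assigns every vertex the color $1$, so every arc is a weak ascent and $\chrom_{D'}^{\wAs}(1)=1$. Second, Lemma~\ref{lem:chi-non-zero} yields $\chrom_{D'}^{\As}(1)=1$ if $D'$ has no arcs and $\chrom_{D'}^{\As}(1)=0$ otherwise, since $\chrom_{D'}^{\As}(1)>0$ iff $\ell(D')<1$ iff $D'$ has no arc at all.

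The first evaluation, applied to \eqref{eq:q-1} and \eqref{eq:q-3}, immediately produces \eqref{eq:q1-1} and \eqref{eq:q1-3} after setting $q=1$. For \eqref{eq:q-2} and \eqref{eq:q-4}, the second evaluation restricts the summation to those $(R,S,T)$ for which $\acyc\bigl(D^{-T}_{\setminus R}\bigr)$ (respectively $\acyc\bigl(D^{-T}_{/R}\bigr)$) has no arcs. The key observation is that $\acyc(D')$ is arcless precisely when every arc of $D'$ is cyclic, i.e.\ when $D'$ is totally cyclic. In that case every connected component of $D'$ is itself strongly connected, because any arc $(u,v)$ lies on a directed cycle so $u$ and $v$ are in the same strongly connected component, and connectivity is generated by arc-adjacency. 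Hence $\vv\bigl(\acyc(D')\bigr)=\comp(D')$ whenever $D'$ is totally cyclic.

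Substituting these facts into \eqref{eq:q-2} gives \eqref{eq:q1-2} directly. For \eqref{eq:q1-4}, I would additionally use that neither contraction nor reorientation changes the number of connected components of the underlying graph; since $R\uplus S\uplus T=A$ involves no deletion, $\comp\bigl(D^{-T}_{/R}\bigr)=\comp(D)$ identically, and so the sign $(-1)^{\vv(\acyc(D^{-T}_{/R}))}=(-1)^{\comp(D)}$ is constant across the sum and factors out of the right-hand side of \eqref{eq:q-4}.

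The only mildly delicate point will be the sign bookkeeping: in \eqref{eq:q1-2} we do delete (so $\comp\bigl(D^{-T}_{\setminus R}\bigr)$ depends on $R$ and must remain inside the sum), whereas in \eqref{eq:q1-4} we do not, which is what allows $(-1)^{\comp(D)}$ to come outside. Beyond correctly matching each of $\chrom^{\As}(1)$ and $\chrom^{\wAs}(1)$ with its corresponding topological condition (\emph{no arcs} versus \emph{totally cyclic}), no genuinely new argument is needed, and the theorem falls out by routine substitution.
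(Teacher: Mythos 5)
Your proposal is correct and follows essentially the same route as the paper: specialize Theorem~\ref{thm:expansions-q} at $q=1$ using $\chrom^{\wAs}(1)=1$, $\chrom^{\As}(1)=\ONE_{\textrm{no arcs}}$ (hence $\chrom^{\As}_{\acyc(D')}(1)=\ONE_{D'\textrm{ totally cyclic}}$), and $\vv(\acyc(D'))=\comp(D')$ for totally cyclic $D'$. Your explicit remark that $\comp(D^{-T}_{/R})=\comp(D)$ (no deletions occur) is exactly the bookkeeping the paper leaves implicit when pulling $(-1)^{\comp(D)}$ out of the sum in \eqref{eq:q1-4}.
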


\begin{proof}
Observe that $\chi_D^{\wAs}(1)=1$ for any digraph $D$. Hence, plugging $q=1$ in~\eqref{eq:q-1} and~\eqref{eq:q-3} gives~\eqref{eq:q1-1} and~\eqref{eq:q1-3} respectively. Observe also that $\chi_D^{\As}(1)=\ONE_{D\textrm{ has no arcs}}$. Hence, $\chi_{\acyc(D)}^{\As}(1)=\ONE_{D\textrm{ is totally cyclic}}$.
Moreover, if $D$ is totally cyclic, $\vv(\acyc(D))=\comp(D)$. Thus, plugging $q=1$ in~\eqref{eq:q-2} and~\eqref{eq:q-4} gives~\eqref{eq:q1-2} and~\eqref{eq:q1-4} respectively.
\end{proof}

\begin{remark} Observe that Theorem~\ref{thm:q=-1} could also be derived directly from the Equations (\ref{eq:<delete}-\ref{eq:leq-contract}) using the following identity:
\begin{equation}\label{eq:indicators}
\chi^{\As}_D(-1)=(-1)^{|V|}\ONE_{D \textrm{ acyclic}},~~\textrm{and}~~\chi^{\wAs}_D(-1)=(-1)^{\comp(D)}\ONE_{D \textrm{ totally cyclic}}.
\end{equation}
The identity~\eqref{eq:indicators}, in turn, follows from Lemma~\ref{lem:reciprocity-chrom}, along with the facts that $\chi_D^{\wAs}(1)=1$, and $\chi_D^{\As}(1)=\ONE_{D\textrm{ has no arcs}}$.
\end{remark}

Among the results of Theorem~\ref{thm:q=-1}, Equations~\eqref{eq:q1-1} and~\eqref{eq:q1-4} are especially nice because they do not involve any sign on their left-hand side. Equation~\eqref{eq:q1-1} gives the bivariate generating function of acyclic digraphs obtained from $D$ by deleting and reorienting some arcs. 
Let us state the specialization of~\eqref{eq:q1-1} corresponding to reorientations.
\begin{cor}\label{cor:GFacyc1}
The generating function of acyclic reorientations of $D$, counted according to the number of reoriented arcs is
\begin{equation}\label{eq:GF-acyclic-reorientation}
\sum_{\substack{S \subseteq A \\ D^{-S}\textrm{ acyclic}}} y^{|S|}= (-1)^{|V|}[z^{|A|}]\B_D(-1,yz,z).
\end{equation}
\end{cor}


\begin{example}
Consider the digraph $D$ represented on the left of Figure~\ref{fig:reorientation}. We can compute
$$B_{D}(q,y,z)=q+q(q-1)(y^2+z^2+yz)+\frac{q(q-1)(q-2)}{6}(y^3+z^3+2yz(y+z)),$$
so that $\ds -[z^{3}]\B_{D}(-1,yz,z)=1+2y+2y^2+y^3$. 
Looking at Figure~\ref{fig:reorientation}, we see that this matches the generating function $\sum_{\substack{S \subseteq A,~ D^{-S}\textrm{ acyclic}}} y^{|S|}$.
\end{example}

\fig{width=.5\linewidth}{reorientation}{A digraph $D$ and its reorientations (organized according to the ``graph of arc-flips''). The 6 acyclic reorientations are indicated by a letter $A$, while the 2 totally cyclic reorientations are marked by a letter $T$. We see that $\ds\sum_{S \subseteq A,~ D^{-S}\textrm{ acyclic}} y^{|S|}=1+2y+2y^2+y^3$, and $\ds\sum_{S \subseteq A,~ D^{-S}\textrm{ totally cyclic}} y^{|S|}=y+y^2$.}

Next we state the specialization of~\eqref{eq:q1-1} corresponding to acyclic sub-orientations.
\begin{cor}\label{cor:GFacyc2}
The generating function of acyclic subgraphs of $D$, counted according to the number of arcs is 
\begin{equation}\label{eq:GF-acyclic-deletion}
\sum_{\substack{S \subseteq A \\ D_{\setminus S}\textrm{ acyclic}}} y^{|A\setminus S|}=(-1)^{|V|} \B_D(-1,1+y,1).
\end{equation}
\end{cor}


\begin{example}
For the digraph $D$ represented on the left of Figure~\ref{fig:subdigraph2} we have
$$B_{D}(q,y,z)=q+3q(q-1)yz+\frac{q(q-1)(q-2)}{2}yz(y+z),$$
so that $\ds (-1)^{|V|} \B_D(-1,1+y,1)=1+3y+3y^2$. 
Looking at Figure~\ref{fig:subdigraph2}, we see that this matches the generating function $\ds \sum_{S \subseteq A, ~ D_{\del S}\textrm{ acyclic}} y^{|A\setminus S|}$.
\end{example}

\fig{width=.5\linewidth}{subdigraph2}{A digraph $D$ and its subgraphs. The 7 acyclic subgraphs are indicated by a letter $A$. We see that $\ds\sum_{S \subseteq A,~ D_{\del S}\textrm{ acyclic}} y^{|A\setminus S|}=1+3y+3y^2$.}


Next we state the specialization of~\eqref{eq:q1-4} corresponding to totally cyclic reorientations.
\begin{cor}\label{cor:GFcyc1}
The generating function of totally cyclic reorientations of $D$, counted according to the numbers of reoriented arcs is
\begin{equation}\label{eq:GF-cyclic-reorientation}
\sum_{\substack{S \subseteq A \\ D^{-S}\textrm{ totally cyclic}}} y^{|S|}= (-1)^{\comp(D)}(1+y)^{|A|}\B_D\left(-1,\frac{y}{1+y},\frac{1}{1+y}\right).
\end{equation}
\end{cor} 


\begin{example}
For the digraph $D$ represented on the left of Figure~\ref{fig:reorientation}, we find
$$(-1)^{\comp(D)}(1+y)^{|A|}\B_D\left(-1,\frac{y}{1+y},\frac{1}{1+y}\right)=y+y^2.$$
Looking at Figure~\ref{fig:reorientation}, we see that this matches the generating function $\ds \sum_{S \subseteq A,~ D^{-S}\textrm{ totally cyclic}} y^{|S|}$.
\end{example}

Lastly we state the specialization of~\eqref{eq:q1-4} corresponding to totally cyclic contractions.
\begin{cor} \label{cor:GFcyc2}
The generating function of totally cyclic contractions of $D$, counted according to the number of arcs is 
\begin{equation}\label{eq:GF-cyclic-contraction}
\sum_{\substack{S \subseteq A \\ D_{/ S}\textrm{ totally cyclic}}} y^{|A\setminus S|}=(-1)^{\comp(D)}(1+y)^{|A|}\B_D\left(-1,\frac{y}{1+y},0\right).
\end{equation}
\end{cor}

\begin{example}
For the digraph $D$ represented on the left of Figure~\ref{fig:contraction}, we find 
$$(-1)^{\comp(D)}(1+y)^{|A|}\B_D\left(-1,\frac{y}{1+y},0\right)(-1)^{\comp(D)}=y^2+3y+1.$$
Looking at Figure~\ref{fig:contraction}, we see that this matches the generating function $\ds \sum_{S \subseteq A,~ D_{/S} \textrm{ totally cyclic}} y^{|A\setminus S|}$.
\end{example}

\fig{width=.5\linewidth}{contraction}{A digraph $D$ and its contractions. The 5 totally cyclic contractions are indicated by a letter $T$. We see that $\ds\sum_{S \subseteq A,~ D_{\del S}\textrm{ totally cyclic}} y^{|A\setminus S|}=y^2+3y+1$.}


We end this subsection with a question. Note that setting $y=-1$ in~\eqref{eq:deletion-expand} gives 
$$\ds \sum_{S\subseteq A}(-1)^{|A\setminus S|}\chi_{D_{\del S}}^{\As}(q)=\chi_D^{\wAs}(q).$$
Similarly, from~\eqref{eq:deletion-expand2} we get 
$\ds \sum_{S\subseteq A}(-1)^{|A\setminus S|}\chi_{D_{\del S}}^{\wAs}(q)=\chi_D^{\As}(q).$
These identities can also be obtained by a direct inclusion-exclusion argument. Now, setting $q=-1$ and using~\eqref{eq:indicators} gives the following intriguing result.
\begin{prop}\label{prop:mysterious}
For any digraph $D=(V,A)$,
\begin{eqnarray}
(-1)^{|V|-\comp(D)}\sum_{\substack{S\subseteq A \\ D_{\del S} \textrm{ acyclic}}}(-1)^{|A\setminus S|}&=&\ONE_{D\textrm{ totally cyclic}},\label{eq:myster-chi}\\
\sum_{\substack{S\subseteq A \\ D_{\del S} \textrm{ totally cyclic}}}(-1)^{|A\setminus S|+\comp(D_{\del S})-|V|}&=&\ONE_{D\textrm{ acyclic}}.\label{eq:myster-chi2}
\end{eqnarray}
\end{prop}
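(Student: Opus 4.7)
The plan is straightforward: the paper hands us everything we need right before the statement. I will simply specialize the two inclusion-exclusion identities displayed just above Proposition~\ref{prop:mysterious} at $q=-1$, and then invoke the evaluations~\eqref{eq:indicators}.

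\textbf{Step 1: Record the inputs.} From the displayed identities we have, for every positive integer $q$,
\begin{equation*}
\sum_{S\subseteq A}(-1)^{|A\setminus S|}\chi_{D_{\del S}}^{\As}(q)=\chi_D^{\wAs}(q)
\quad\text{and}\quad
\sum_{S\subseteq A}(-1)^{|A\setminus S|}\chi_{D_{\del S}}^{\wAs}(q)=\chi_D^{\As}(q),
\end{equation*}
and since both sides are polynomials in $q$, these equalities persist at $q=-1$. From~\eqref{eq:indicators}, and using that $D_{\del S}$ has the same vertex set $V$ as $D$,
\begin{equation*}
\chi^{\As}_{D_{\del S}}(-1)=(-1)^{|V|}\ONE_{D_{\del S}\textrm{ acyclic}},
\qquad
\chi^{\wAs}_{D_{\del S}}(-1)=(-1)^{\comp(D_{\del S})}\ONE_{D_{\del S}\textrm{ totally cyclic}}.
\end{equation*}

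\textbf{Step 2: Derive \eqref{eq:myster-chi}.} Setting $q=-1$ in the first inclusion-exclusion identity and substituting the evaluations above yields
\begin{equation*}
(-1)^{|V|}\!\!\sum_{\substack{S\subseteq A\\ D_{\del S}\textrm{ acyclic}}}\!\!(-1)^{|A\setminus S|}=(-1)^{\comp(D)}\ONE_{D\textrm{ totally cyclic}},
\end{equation*}
which, multiplying both sides by $(-1)^{|V|-\comp(D)}$ (a global sign), rearranges into~\eqref{eq:myster-chi}.

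\textbf{Step 3: Derive \eqref{eq:myster-chi2}.} Setting $q=-1$ in the second inclusion-exclusion identity and substituting gives
\begin{equation*}
\sum_{\substack{S\subseteq A\\ D_{\del S}\textrm{ totally cyclic}}}(-1)^{|A\setminus S|+\comp(D_{\del S})}=(-1)^{|V|}\ONE_{D\textrm{ acyclic}},
\end{equation*}
and multiplying through by $(-1)^{|V|}$ yields~\eqref{eq:myster-chi2}.

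Since the inclusion-exclusion identities are already granted in the excerpt (they come either from direct inclusion-exclusion on $q$-colorings or, as noted, from~\eqref{eq:deletion-expand} and~\eqref{eq:deletion-expand2} by setting $y=-1$), there is no real obstacle here; the only subtlety worth flagging is that $\comp(D_{\del S})$ depends on $S$ in the second identity, so the sign under the sum cannot be pulled out, whereas in the first identity the vertex set is fixed and $(-1)^{|V|}$ factors out cleanly. This asymmetry is exactly why the two identities in the proposition have slightly different shapes.
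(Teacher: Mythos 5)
Your proof is correct and follows exactly the paper's route: the paper likewise obtains Proposition~\ref{prop:mysterious} by setting $q=-1$ in the two inclusion-exclusion identities displayed just before it and applying the evaluations~\eqref{eq:indicators}. Your write-up merely makes explicit the sign bookkeeping that the paper leaves to the reader, and your remark about why $\comp(D_{\del S})$ must stay inside the sum in~\eqref{eq:myster-chi2} is accurate.
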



\begin{question}\label{question:mysterious-identities-1}
Is it possible to prove the identities~\eqref{eq:myster-chi} and~\eqref{eq:myster-chi2} by a direct combinatorial argument (for instance, by using a ``sign reversing involution'')?
Note that~\eqref{eq:myster-chi2} is obvious when $D$ is acyclic (only 1 totally cyclic subgraph). It is also easy to prove~\eqref{eq:myster-chi} when $D$ is not totally cyclic. Indeed this can be done by considering the involution which ``flips'' a given acyclic arc of $D$ (adding it if is absent, removing it if it is present). But the other cases seem more challenging. 
\end{question}

\subsection{A planar duality relation}
In this subsection we establish a relation between the $B$-polynomial of a planar digraph and its dual.
Recall that a digraph $D$ is \emph{planar} if it can be drawn in the plane without arc crossings. Let $D$ be a connected planar digraph. A \emph{dual digraph} is a digraph $D^*$ obtained by choosing a planar drawing of $D$, and then placing a vertex of $D^*$ in each face of $D$, and drawing an arc $a^*$ of $D^*$ across each arc $a$ of $D$ with $a^*$ oriented from the left of $a$ to the right of $a$. See Figure~\ref{fig:duality}. If $D$ is a disconnected planar digraph, then a dual is obtained by applying the preceding procedure to each connected component of $D$.

\fig{width=.8\linewidth}{duality}{(a) A planar digraph. (b) Constructing the dual digraph $D^*$, by drawing an arc of $D^*$ across each arc of $D$. (c) The dual digraph $D^*$.}

\begin{thm}\label{thm:duality}
Let $D=(V,A)$ be a planar digraph, and let $D^*=(V^*,A^*)$ be a dual digraph. Then the polynomials $B_{D}(-1,y,z)$ and $B_{D^*}(-1,y,z)$ are related by the following change of variables:
\begin{equation}\label{eq:duality}
B_{D^*}(-1,y,z)=(-1)^{\comp(D)-|V|}(1-y-z)^{|A|}B_D\l(-1,\frac{1-y}{1-y-z},\frac{1-z}{1-y-z}\r).
\end{equation}
\end{thm}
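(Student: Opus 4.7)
The plan is to combine Theorem~\ref{thm:q=-1} applied twice---once to $D$ in the form of equation~\eqref{eq:q1-1}, once to $D^*$ in the form of equation~\eqref{eq:q1-4}---and match the two left-hand sides via the classical planar duality for digraphs. The key observation is that the bijection $a \leftrightarrow a^*$ between arcs of $D$ and $D^*$ satisfies: (i) deletion in $D$ corresponds to contraction in $D^*$ and vice versa, (ii) reorientation of $a$ in $D$ corresponds to reorientation of $a^*$ in $D^*$ (because flipping $a$ swaps its left and right sides), and (iii) directed cycles in $D$ correspond to directed cocycles in $D^*$ (a directed cycle encloses a region, and its dual arcs form a directed cut). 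Property (iii) implies that a planar digraph is acyclic if and only if its dual is totally cyclic. Combining (i)--(iii), for any partition $R \uplus S \uplus T = A$ with corresponding partition $R^* \uplus S^* \uplus T^* = A^*$, we have $(D^{-T}_{\setminus R})^* = (D^*)^{-T^*}_{/R^*}$, so $D^{-T}_{\setminus R}$ is acyclic if and only if $(D^*)^{-T^*}_{/R^*}$ is totally cyclic.

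Using these bijections together with $|A^*| = |A|$ and $\comp(D^*) = \comp(D)$ (the dual of each connected planar component is connected), the sums on the left-hand sides of \eqref{eq:q1-1} applied to $D$ and \eqref{eq:q1-4} applied to $D^*$ are term-by-term equal. Equating their right-hand sides gives
$$(-1)^{|V|} B_D(-1, 1+y, 1+z) = (-1)^{\comp(D)} (1+y+z)^{|A|}\, B_{D^*}\!\left(-1, \frac{y}{1+y+z}, \frac{z}{1+y+z}\right).$$

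It then remains to invert the change of variables. Setting $Y = y/(1+y+z)$ and $Z = z/(1+y+z)$, one computes $1+y+z = 1/(1-Y-Z)$, $1+y = (1-Z)/(1-Y-Z)$, and $1+z = (1-Y)/(1-Y-Z)$. Substituting and rearranging yields
$$B_{D^*}(-1, Y, Z) = (-1)^{\comp(D) - |V|} (1-Y-Z)^{|A|}\, B_D\!\left(-1, \frac{1-Z}{1-Y-Z}, \frac{1-Y}{1-Y-Z}\right),$$
and one finally invokes the symmetry $B_D(q,y,z) = B_D(q,z,y)$ of Proposition~\ref{prop:easy}\eqref{it1} to swap the two arguments of $B_D$ on the right, matching \eqref{eq:duality} after renaming $(Y,Z)$ to $(y,z)$.

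The main obstacle is the careful verification of the planar-duality correspondences, in particular that directed cycles of $D$ biject with directed cocycles of $D^*$ under the left-to-right orientation convention used in Figure~\ref{fig:duality}; this is a classical fact but must be checked in order to conclude that acyclicity of $D^{-T}_{\setminus R}$ matches total cyclicity of $(D^*)^{-T^*}_{/R^*}$. Once this combinatorial matching is in place, the remainder of the argument is just the algebra of the stated change of variables.
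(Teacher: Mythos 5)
Your proposal is correct and follows essentially the same route as the paper's proof: comparing \eqref{eq:q1-1} for $D$ with \eqref{eq:q1-4} for $D^*$ via the deletion--contraction, reorientation, and acyclic--totally-cyclic duality correspondences, then inverting the change of variables. The only difference is that you carry out explicitly the final substitution (including the needed appeal to the symmetry $B_D(q,y,z)=B_D(q,z,y)$), which the paper leaves implicit in the phrase ``which is equivalent to \eqref{eq:duality}.''
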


\begin{proof} 
We will explain how to obtain~\eqref{eq:duality} by comparing~\eqref{eq:q1-1} for $D$ with~\eqref{eq:q1-4} for $D^*$. 
Let $S\subseteq A$ and let $S^*\subseteq A^*$ be the set of arcs dual to $S$. Note that the dual of $D^{-S}$ is ${D^*}^{-S^*}$, and the dual of $D_{\del S}$ is $D^*_{/S^*}$ (because the dual of deleting an arc is contracting the dual arc). 
Moreover, a planar digraph $D'$ is acyclic if and only if its dual ${D'}^*$ is totally cyclic (because the dual of an acyclic arc is a cyclic arc).
Thus using~\eqref{eq:q1-1} for $D$ and~\eqref{eq:q1-4} for $D^*$ gives 
\begin{eqnarray*}
(-1)^{|V|}\B_D(-1,1+y,1+z)&=&\sum_{\substack{R\uplus S\uplus T= A \\ D^{-T}_{\setminus R}\textrm{ acyclic}}}\!\!\! y^{|S|}z^{|T|}\\
&=&\sum_{\substack{R^*\uplus S^*\uplus T^*= A^* \\ {D^*}^{-T^*}_{/ {R^*}}\textrm{ totally cyclic}}}\!\!\! y^{|S|}z^{|T|}\\
&=&(-1)^{\comp(D)}(1+y+z)^{|A|}\B_{D^*}\left(-1,\frac{y}{1+y+z},\frac{z}{1+y+z}\right),\\
\end{eqnarray*}
which is equivalent to~\eqref{eq:duality}.
\end{proof}


\begin{remark} 
Recall the classical duality relation for the Tutte polynomial of a planar graph $G=(V,E)$:
\begin{equation}\label{eq:duality-Tutte}
T_{G^*}(x,y)=T_{G}(y,x). 
\end{equation}
Using~\eqref{eq:Potts=Tutte} (and the Euler relation $|V|+|V^*|=|A|+2\comp(D)$), this translates into the following relation for the Potts polynomial:
\begin{equation}\label{eq:duality-Pclassical}
P_{G^*}(q,y)=\frac{\l((q-1)y+1\r)^{|A|}}{q^{|V|-\comp(G)}}P_G\l(q,\frac{y-1}{(1-q)y-1}\r).
\end{equation}
Hence, using~\eqref{eq:PottsThree} we obtain another bivariate duality relation for the $B$-polynomial of a planar digraph $D=(V,A)$:
\begin{equation}\label{eq:duality-Bclassical}
\B_{D^*}(q,y,y)=\frac{\l((q-1)y+1\r)^{|A|}}{q^{|V|-\comp(G)}}\B_D\l(q,\frac{y-1}{(1-q)y-1},\frac{y-1}{(1-q)y-1}\r).
\end{equation}
Although a duality relation holds for the two bivariate specializations $y=z$ and $q=-1$ of the $B$-polynomial, there cannot be a duality relation for the trivariate $B$-polynomial. 
Indeed, the dual of any tree is a digraph with only loops, hence having $B$-polynomial equal to 1. But since the $B$-polynomial detects the length of directed paths, there must be infinitely many $B$-polynomials of trees (even after renormalizing by any prefactor depending on the number of vertices, arcs, and connected components). However, the previous argument does not exclude that a certain refinement of the $B$-polynomial could have a full duality relation.
\end{remark}

\begin{remark}
The invariant $(-1)^{c(D)}B_D(-1,y,z)$ appearing in Theorem \ref{thm:duality} is an \emph{oriented-matroid invariant}: it only depends on the oriented-matroid associated with $D$ (as mentioned earlier, this is not the case for $B_D(q,y,z)$). This fact can be proved by a direct combinatorial argument using the so-called \emph{Whithney flips}. In \cite{Awan-Bernardi:A-poly} we prove that this invariant is actually a specialization of an oriented-matroid invariant, called \emph{$A$-polynomial}, which is analoguous to the $B$-polynomial.
\end{remark}

\smallskip

\section{Tutte polynomials and their evaluations} \label{sec:Tutte-eval} 
In this section, we reinterpret some of the results of the previous sections (those about the specialization $B_D(q,y,1)$) in terms of the invariants $T^{(1)}_D$ and $T^{(2)}_D$ of mixed graphs, and establish some links with the classical theory of the Tutte polynomial.


Let us first express the invariants $T^{(1)}$ and $T^{(2)}$ in terms of the strict and weak chromatic polynomials. Using~\eqref{eq:deletion-expand} and~\eqref{eq:deletion-expand2} we get two expressions for each invariant:
\begin{eqnarray}
T_D^{(1)}(x,y)&\!\!=\!\!&\frac{y^{|E|}}{(x-1)^{\comp(D)}(y-1)^{|V|}}\sum_{S\subseteq A}\l(\frac{1-y}{y}\r)^{|A\setminus S|}\chi_{D_{\del S}}^{\As}((x-1)(y-1)),\label{eq:T1<}\\
T_D^{(1)}(x,y)&\!\!=\!\!&\frac{y^{|E|-|A|}}{(x-1)^{\comp(D)}(y-1)^{|V|}}\sum_{S\subseteq A}(y-1)^{|A\setminus S|}\chi_{D_{\del S}}^{\wAs}((x-1)(y-1)),\label{eq:T1<=}\\
T_D^{(2)}(x,y)&\!\!=\!\!&\frac{(y/2)^{|E|}}{(x-1)^{\comp(D)}(y-1)^{|V|}}\sum_{\vec{D}=(V,\vec{E})\in \ori(D)}\,\sum_{S\subseteq \vec{E}}\l(\frac{2(1-y)}{y}\r)^{|\vec{E}\setminus S|} \chi_{\vec{D}_{\del S}}^{\As}((x-1)(y-1)),\quad \label{eq:T2<}\\
T_D^{(2)}(x,y)&\!\!=\!\!&\frac{(1-y/2)^{|E|}}{(x-1)^{\comp(D)}(y-1)^{|V|}}\sum_{\vec{D}=(V,\vec{E})\in \ori(D)}\,\sum_{S\subseteq \vec{E}}\l(\frac{y-1}{1-y/2}\r)^{|\vec{E}\setminus S|} \chi_{\vec{D}_{\del S}}^{\wAs}((x-1)(y-1)).\quad \label{eq:T2<=}
\end{eqnarray}

We now prove the polynomiality of $T_D^{(1)}$ and $T_D^{(2)}$. 
\begin{prop}\label{prop:polynomiality}
For any digraph $D=(V,A)$, the invariants $T^{(1)}_D(x,y)$ and $T^{(2)}_D(x,y)$ defined by~\eqref{eq:defT1} and~\eqref{eq:defT2} are polynomials in $x$ and $y$.
\end{prop}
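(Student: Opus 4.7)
The plan is to argue by induction on $h \defeq |H|$, the number of unoriented edges of the mixed graph $D$.

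For the base case $h=0$, the equality $|A|=|E(D)|$ allows one to rewrite formula~\eqref{eq:T1<} by absorbing the prefactor $y^{|E|}$ into the sum, yielding
$$T^{(1)}_D(x,y) \;=\; \frac{1}{(x-1)^{\comp(D)}(y-1)^{|V|}} \sum_{S\subseteq A} y^{|S|}\,(1-y)^{|A\setminus S|}\,\chi^{\As}_{D_{\setminus S}}\!\bigl((x-1)(y-1)\bigr).$$
The numerator is visibly a polynomial in $x,y$, so I would only need to show that every summand is divisible by $(x-1)^{\comp(D)}(y-1)^{|V|}$. I plan to combine three elementary ingredients:
\begin{compactenum}[(i)]
\item Proposition~\ref{prop:easy}\eqref{it9} says $q^{\comp(D')}$ divides $B_{D'}(q,y,z)$, hence also divides $\chi^{\As}_{D'}(q) = [y^{|A'|}]B_{D'}(q,y,1)$. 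Substituting $q=(x-1)(y-1)$ with $D'=D_{\setminus S}$ shows that $\chi^{\As}_{D_{\setminus S}}((x-1)(y-1))$ is divisible by $(x-1)^{\comp(D_{\setminus S})}(y-1)^{\comp(D_{\setminus S})}$;
\item $\comp(D_{\setminus S})\geq \comp(D)$, since deletion of arcs cannot decrease the number of connected components;
\item $|A\setminus S|+\comp(D_{\setminus S})\geq |V|$, the classical rank inequality for the graph $(V,A\setminus S)$.
\end{compactenum}
Together with the prefactor $(1-y)^{|A\setminus S|}$, these ensure each summand carries a factor of $(x-1)^{\comp(D)}(y-1)^{|V|}$, as required. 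The identical argument applied to~\eqref{eq:T2<} handles $T^{(2)}_D$: for $h=0$ one has $\ori(D)=\{D\}$, so the sum over orientations collapses to a single term with only a harmless constant prefactor $1/2^{|A|}$.

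For the inductive step $h \geq 1$, I would pick any unoriented edge $e \in H$. By Corollary~\ref{cor:rec-Tutte-i}, for each $i\in\{1,2\}$ the invariant $T^{(i)}_D(x,y)$ equals a polynomial combination of $T^{(i)}_{D_{\setminus e}}(x,y)$ and $T^{(i)}_{D_{/e}}(x,y)$, with coefficients drawn from $\{1, x-1, y\}$ depending on whether $e$ is an ordinary edge, a bridge, or a loop. Both $D_{\setminus e}$ and $D_{/e}$ have exactly $h-1$ unoriented edges, so the inductive hypothesis applies to them and yields polynomiality of $T^{(i)}_D$.

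The main obstacle will be the divisibility bookkeeping in the base case: extracting enough factors of $(x-1)$ and $(y-1)$ term by term to cancel the denominator $(x-1)^{\comp(D)}(y-1)^{|V|}$. All the ingredients needed for this are either elementary graph-theoretic facts or direct consequences of results already established earlier in the paper, so once the decomposition above is in place the verification is routine.
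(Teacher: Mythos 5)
Your proof is correct, but it takes a route that differs from the paper's in a meaningful way. The paper proves polynomiality for an arbitrary mixed graph in one shot from the expansion~\eqref{eq:T1<} (resp.~\eqref{eq:T2<}): each summand is rewritten as $(-1)^{|A\setminus S|}(x-1)^{\comp(D_{\del S})-\comp(D)}y^{|E|-|A\setminus S|}(y-1)^{|A\setminus S|+\comp(D_{\del S})-|V|}\wchrom_{D_{\del S}}^{\As}((x-1)(y-1))$, where $\wchrom^{\As}_{D'}(q)=q^{-\comp(D')}\chi^{\As}_{D'}(q)$, and the three exponents are checked to be non-negative. The one subtle point there is the exponent $|E|-|A\setminus S|$ of $y$, which can be negative when $|A|>|E|$; the paper handles this by invoking Lemma~\ref{lem:chi-non-zero} to discard the summands with $D_{\del S}$ non-acyclic (for which $\chi^{\As}_{D_{\del S}}=0$), since acyclicity forces $|A\setminus S|\leq|E|$. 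Your induction on the number of unoriented edges sidesteps this issue entirely: in your base case $|A|=|E(D)|$, so the $y$-power $y^{|S|}$ is automatically non-negative and Lemma~\ref{lem:chi-non-zero} is never needed, while the inductive step is handled by Corollary~\ref{cor:rec-Tutte-i}, which is legitimately an identity of rational functions independent of polynomiality (it comes from the $B$-polynomial recurrences of Lemmas~\ref{lem:rec-edge} and~\ref{lem:rec-arc}). Your divisibility bookkeeping in the base case --- combining $((x-1)(y-1))^{\comp(D_{\del S})}\mid\chi^{\As}_{D_{\del S}}((x-1)(y-1))$ with $\comp(D_{\del S})\geq\comp(D)$ and the rank inequality $|A\setminus S|+\comp(D_{\del S})\geq|V|$ --- is exactly the same computation the paper performs, just restricted to the oriented case. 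What the paper's version buys is a single uniform argument with no appeal to the recurrence; what yours buys is a cleaner base case and a proof that makes the role of the deletion--contraction structure on unoriented edges explicit. Both are complete.
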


\begin{proof}
For a digraph $D$, we write $\wchrom_{D}^{\As}(q)=q^{-\comp(D)}\chrom_{D}^{\As} (q)=[y^{|A|}]q^{-\comp(D)}B_{D}(q,y,1)$. 
By Proposition~\ref{prop:easy}\eqref{it9}, $q^{\comp(D)}$ divides $B_{D}(q,y,z)$, hence $\wchrom_{D}^{\As}(q)$ is a polynomial in $q$.
Moreover,~\eqref{eq:T1<} gives
$$T^{(1)}_D(x,y)=\sum_{S\subseteq A}(-1)^{|A\setminus S|}(x-1)^{\comp(D_{\del S})-\comp(D)}y^{|E|-|A\setminus S|}(y-1)^{|A\setminus S|+\comp(D_{\del S})-|V|}\wchrom_{D_{\del S}}^{\As}((x-1)(y-1)).$$
The exponent of $(x-1)$ in the above sum is clearly non-negative. 
The exponent of $y$ is non-negative if $D_{\del S}$ is acyclic, which we can assume by Lemma~\ref{lem:chi-non-zero}. 
The exponent of $(y-1)$ is also non-negative since it corresponds to the \emph{nullity} (a.k.a. cyclomatic number) of the graph underlying $D_{\setminus S}$. Hence $T^{(1)}_D(x,y)$ is a polynomial. The same argument starting from~\eqref{eq:T2<} shows that $T_D^{(2)}(x,y)$ is a polynomial.
\end{proof}

 

Equations~\eqref{eq:T1<} and~\eqref{eq:T2<} simplify greatly for $y=0$ and give
\begin{equation*}
T^{(1)}_D(x,0)=T^{(2)}_D(x,0)=\frac{(-1)^{|V|}}{(x-1)^{\comp(D)}}\sum_{\vec{D}\in \ori(D)}\chi_{\vec{D}}^{\As}(1-x).
\end{equation*}
Now, using~\eqref{eq:acyclic-negq} gives
\begin{equation}\label{eq:Tutte-y=0bis}
(-1)^{|V|}\chi_D(1-x)=(x-1)^{\comp(D)}T^{(1)}_D(x,0)=(x-1)^{\comp(D)}T^{(2)}_D(x,0)=\sum_{\substack{\vec{D}\in \ori(D)\\ \vec{D}\textrm{ acyclic}}}\chi_{\vec{D}}^{\wAs}(x-1).
\end{equation}
As a consequence, we obtain the following result.
\begin{cor}[\cite{Beck:strict-chromatic-poly-digraph}]\label{cor:T20}
For any mixed graph $D$, 
\begin{equation}\label{eq:T20}
(-1)^{|V|}\chi_D(-1)=T^{(1)}_D(2,0)=T^{(2)}_D(2,0)=\#\textrm{ acyclic complete orientations of }D.
\end{equation}
More generally, 
$$(-1)^{|V|}\chi_D(-q)=\sum_{f:V\to [q]}\ga_f,$$ 
where the sum is over all the $q$-colorings of $D$, and $\ga_f$ is the number of acyclic complete orientations $\vec{D}$ of $D$ such that there is no strict descent of colors along the arcs of $\vec{D}$.
\end{cor}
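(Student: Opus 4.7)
The plan is to derive both parts of Corollary~\ref{cor:T20} by specializing equation~\eqref{eq:Tutte-y=0bis}, which was just established. The first equality chain $(-1)^{|V|}\chi_D(-1)=T^{(1)}_D(2,0)=T^{(2)}_D(2,0)$ is immediate by setting $x=2$ in~\eqref{eq:Tutte-y=0bis}, since the denominator $(x-1)^{\comp(D)}$ then becomes $1$. It remains to show that the right-hand side of~\eqref{eq:Tutte-y=0bis} at $x=2$ counts acyclic complete orientations of $D$. Substituting $x=2$ there yields
\begin{equation*}
\sum_{\substack{\vec{D}\in\ori(D)\\ \vec{D}\text{ acyclic}}}\chi^{\wAs}_{\vec{D}}(1),
\end{equation*}
and I would verify that $\chi^{\wAs}_{\vec{D}}(1)=1$ for every digraph $\vec{D}$. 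This is immediate from the definition: the unique $1$-coloring assigns color $1$ to every vertex, and on every arc this gives $f(u)=f(v)$, which qualifies as a weak ascent. Hence each acyclic complete orientation is counted exactly once.

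For the more general statement, I would apply~\eqref{eq:Tutte-y=0bis} with $x=1+q$, which gives
\begin{equation*}
(-1)^{|V|}\chi_D(-q)=\sum_{\substack{\vec{D}\in\ori(D)\\ \vec{D}\text{ acyclic}}}\chi^{\wAs}_{\vec{D}}(q).
\end{equation*}
By the combinatorial definition of the weak chromatic polynomial, $\chi^{\wAs}_{\vec{D}}(q)$ counts the functions $f\colon V\to[q]$ satisfying $f(u)\leq f(v)$ along every arc $(u,v)$ of $\vec{D}$; equivalently, no arc of $\vec{D}$ is a strict descent with respect to $f$. Swapping the order of summation then gives
\begin{equation*}
(-1)^{|V|}\chi_D(-q)=\sum_{f\colon V\to[q]}\#\bigl\{\vec{D}\in\ori(D)\ :\ \vec{D}\text{ acyclic and }f\text{ has no strict descent on }\vec{D}\bigr\}=\sum_{f\colon V\to[q]}\ga_f,
\end{equation*}
as desired.

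Note that the $q=1$ case of the general statement recovers the first part of the corollary, because the unique coloring is constant, so ``no strict descent'' is automatic and $\ga_f$ equals the total number of acyclic complete orientations of $D$. The proof is essentially a chain of substitutions into a formula already established; the only non-mechanical ingredient is the combinatorial reinterpretation of $\chi^{\wAs}_{\vec{D}}(q)$ combined with the Fubini-style exchange of the sums over $\vec{D}$ and $f$. I do not anticipate any significant obstacle beyond ensuring the specialization $\chi^{\wAs}_{\vec{D}}(1)=1$ is recorded correctly.
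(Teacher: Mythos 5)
Your proof is correct and takes essentially the same route as the paper: both specialize equation~\eqref{eq:Tutte-y=0bis} at $x=2$ (using $\chi^{\wAs}_{\vec{D}}(1)=1$) for the first part, and at $x=q+1$ followed by reinterpreting $\sum_{\vec{D}}\chi^{\wAs}_{\vec{D}}(q)$ as a count of pairs $(\vec{D},f)$ for the general statement. Your write-up is, if anything, slightly more explicit about the Fubini-style exchange of summation than the paper's.
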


\begin{proof}
Setting $x=2$ in~\eqref{eq:Tutte-y=0bis} immediately gives~\eqref{eq:T20} since $\chi_{\vec{D}}^{\wAs}(1)=1$. Setting $x=q+1$ in~\eqref{eq:Tutte-y=0bis} gives
$$(-1)^{|V|}\chi_D(-q)=\sum_{\substack{\vec{D}\in \ori(D)\\ \vec{D}\textrm{ acyclic}}}\chi_D^{\wAs}(q).$$
Moreover the right-hand side can be interpreted as the number of pairs $(\vec{D},f)$, where $\vec{D}$ an acyclic complete orientations of $D$, and $f$ is a $q$-coloring such that there is no strict descent of colors along the arcs of $\vec{D}$.
\end{proof}

Corollary~\ref{cor:T20} was first proved by Stanley for unoriented graphs~\cite{Stanley:acyclic-orientations}, and by Beck, Bogart and Pham for mixed graphs~\cite{Beck:strict-chromatic-poly-digraph}\footnote{Only the invariant $\chi_D$ is studied in~\cite{Beck:strict-chromatic-poly-digraph}, and not $T^{(1)}_D$, $T^{(2)}_D$.}.\\

Similarly,~\eqref{eq:T2<=} simplifies for $y=2$:
\begin{equation}\label{eq:Tutte2-y=2}
T^{(2)}_D(x,2)=\frac{1}{(x-1)^{\comp(D)}}\sum_{\vec{D}\in \ori(D)}\chi_{\vec{D}}^{\wAs}(x-1).
\end{equation}
In particular, setting $x=0$ in~\eqref{eq:Tutte2-y=2}, and using~\eqref{eq:indicators} gives the following result dual to~\eqref{eq:T20}.
\begin{cor}\label{cor:T02}
For any mixed graph $D$, 
\begin{equation}\label{eq:T02}
T^{(2)}_D(0,2)=\#\textrm{ totally cyclic complete orientations of }D.
\end{equation}
\end{cor}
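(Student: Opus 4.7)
The plan is to simply specialize the identity \eqref{eq:Tutte2-y=2} at $x=0$ and invoke the reciprocity formula \eqref{eq:indicators} for the weak-chromatic polynomial. First I would substitute $x=0$ into
$$T^{(2)}_D(x,2)=\frac{1}{(x-1)^{\comp(D)}}\sum_{\vec{D}\in \ori(D)}\chi_{\vec{D}}^{\wAs}(x-1),$$
so that the polynomials $\chi_{\vec{D}}^{\wAs}$ are evaluated at $-1$. Then I would apply the identity $\chi_{\vec{D}}^{\wAs}(-1)=(-1)^{\comp(\vec{D})}\ONE_{\vec{D}\textrm{ totally cyclic}}$ from \eqref{eq:indicators}.

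The key observation is that every complete orientation $\vec{D}$ of a mixed graph $D$ has the same underlying (unoriented) graph as $D$, and in particular $\comp(\vec{D})=\comp(D)$. Hence the sign $(-1)^{\comp(\vec{D})}$ is constant across the sum and cancels exactly the prefactor $1/(-1)^{\comp(D)}$. After this cancellation the sum reduces to the number of complete orientations $\vec{D}\in\ori(D)$ which are totally cyclic, which is the desired count.

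There is no real obstacle here: the result is an immediate specialization, parallel to the derivation of Corollary~\ref{cor:T20} from \eqref{eq:Tutte-y=0bis}. The only thing to check carefully is the connected-components bookkeeping, but since orientation does not alter the underlying graph this is automatic.
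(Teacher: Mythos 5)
Your proposal is correct and follows exactly the paper's own (very terse) derivation: the paper obtains Corollary~\ref{cor:T02} by setting $x=0$ in \eqref{eq:Tutte2-y=2} and applying \eqref{eq:indicators}, with the sign $(-1)^{\comp(\vec D)}=(-1)^{\comp(D)}$ cancelling the prefactor just as you describe.
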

This generalizes to mixed graphs the result established by Las Vergnas for unoriented graphs~\cite{LasVergnas:Tutte(02)}.

\begin{example}
Recall that the polynomials $T_{D''}^{(1)}(x,y)$ and $T_{D''}^{(1)}(x,y)$ for the mixed graph $D''$ of Figure~\ref{fig:exp-intro} (considered as a mixed graph with one unoriented edge) are given in~\eqref{eq:exp-intro-T1} and~\eqref{eq:exp-intro-T2}. 
We find $T^{(1)}_{D''}(2,0)=T^{(2)}_{D''}(2,0)=T^{(2)}_{D''}(0,2)=1$ reflecting the fact that there is 1 complete orientation of $D$ which is acyclic and 1 which is totally cyclic.
\end{example}


We now generalize~\eqref{eq:T20}.
Translating~\eqref{eq:GF-acyclic-deletion} in terms of $T^{(1)}_D$ gives the following result.
\begin{thm}\label{thm:T1-acyclic}
For any mixed graph, the polynomial $T_D^{(1)}(x,y)$ contains the generating function of the acyclic subgraphs of $D$, counted according to the number of arcs:
\begin{equation}\label{eq:T1-acyclic}
(y+1)^{|E|+\comp(D)-|V|} T^{(1)}_D\l(y+2,\frac{y}{y+1}\r) = \sum_{S\subseteq A,~D_{\setminus S} \textrm{ acyclic}}y^{|E|-|A\setminus S|}.
\end{equation}
\end{thm}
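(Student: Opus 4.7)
The plan is to unwind the definition of $T^{(1)}_D$ on the left-hand side, reduce the evaluation to $B_D(-1,\cdot,\cdot)$, and then recognize the resulting expression via the generating function~\eqref{eq:GF-acyclic-deletion} already established in Section~\ref{sec:Ehrhart}.

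First, I would substitute $(x,y) = (y+2, y/(y+1))$ into the definition~\eqref{eq:defT1} of $T^{(1)}_D$. The key observation is that with these values, $(x-1)(y-1)$ (interpreted in the variables of \eqref{eq:defT1}) becomes $(y+1)\cdot\bigl(y/(y+1)-1\bigr)=-1$, so $B_D$ is evaluated at $q=-1$. A direct computation then gives
$$(y+1)^{|E|+\comp(D)-|V|}\, T^{(1)}_D\!\left(y+2,\tfrac{y}{y+1}\right) \;=\; (-1)^{|V|}\, y^{|E|}\, B_D\!\left(-1,\tfrac{y+1}{y},1\right),$$
after collecting the $(-1)^{|V|}$ from $(y/(y+1)-1)^{|V|}$ and cancelling powers of $(y+1)$.

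Next, I would invoke the already-proved identity~\eqref{eq:GF-acyclic-deletion}:
$$\sum_{\substack{S\subseteq A\\ D_{\setminus S}\text{ acyclic}}} y^{|A\setminus S|} \;=\; (-1)^{|V|}\, B_D(-1,1+y,1),$$
and apply it with $y$ replaced by $1/y$. This yields
$$(-1)^{|V|}\, B_D\!\left(-1,\tfrac{y+1}{y},1\right) \;=\; \sum_{\substack{S\subseteq A\\ D_{\setminus S}\text{ acyclic}}} y^{-|A\setminus S|}.$$
Multiplying both sides by $(-1)^{|V|} \cdot (-1)^{|V|} y^{|E|} = y^{|E|}$ and combining with the formula from the first step gives exactly the claimed right-hand side $\sum y^{|E|-|A\setminus S|}$.

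There is essentially no obstacle here beyond bookkeeping: both the evaluation of $T^{(1)}_D$ and the recognition of $B_D(-1,(y+1)/y,1)$ as a generating function for acyclic subdigraphs are direct substitutions into identities proved earlier. The only place where one must be careful is keeping track of the overloaded variable $y$ (which appears both as a formal variable in the theorem and as the second argument of $T^{(1)}_D$), and of the factor $(y+1)^{|E|+\comp(D)-|V|}$ which exactly compensates the prefactors appearing in~\eqref{eq:defT1} after the substitution $1/y \mapsto (y+1)/y$. Once these are handled, the equality is immediate.
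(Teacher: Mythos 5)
Your proof is correct and is exactly the argument the paper intends: the paper gives no separate proof, merely stating that Theorem~\ref{thm:T1-acyclic} is obtained by ``translating~\eqref{eq:GF-acyclic-deletion} in terms of $T^{(1)}$,'' which is precisely your substitution $(x,y)\mapsto(y+2,y/(y+1))$ into~\eqref{eq:defT1} followed by the replacement $y\mapsto 1/y$ in~\eqref{eq:GF-acyclic-deletion}. The bookkeeping of the prefactors $(-1)^{|V|}$, $y^{|E|}$, and $(y+1)^{|E|+\comp(D)-|V|}$ checks out.
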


\begin{example}
For the mixed graph $D''$ of Figure~\ref{fig:exp-intro}, one finds 
$(y+1) T^{(1)}_{D''}\l(y+2,\frac{y}{y+1}\r)=y^3+4y^2+5y+1$ reflecting the fact that the number of acyclic subgraphs of $D$ with 0 arc (resp. 1 arc, 2 arcs, 3 arcs) is 1 (resp. 4, 5, 1).
\end{example}

Note that the special case $y=0$ of Theorem~\ref{thm:T1-acyclic} is~\eqref{eq:T20}. 
The case of Theorem~\ref{thm:T1-acyclic} corresponding to unoriented graphs was first proved by Gessel and Sagan~\cite{Gessel:Tutte-poly+DFS}.
This result for undirected graphs was also rediscovered as part of the theory of \emph{fourientations} for the Tutte polynomial developed in~\cite{Backman:partial-orientations,Backman:fourientation-evaluations,Backman:fourientation-activities}. Recall that in a \emph{fourientation} of a graph $G$, the edges of $G$ can be oriented in either direction (1-way edges), in both direction (2-way edges), or in no direction (0-way edges). So \emph{fourientations} of a graph $G$ are naturally identified with the subgraphs of the digraph $\orient{G}$.

The paper~\cite{Backman:partial-orientations} also contains the following result for a graph $G$: 
\begin{equation}\label{eq:fromBackman}
(y+1)^{|V|-\comp(G)} T_G(y/(y+1),y+2)=\sum_{\substack{\textrm{totally cyclic fourientations of }G \\ \textrm{without 0-way edge}}}y^{\# \textrm{2-way edges}}.
\end{equation}
This result can be recovered from~\eqref{eq:q1-4} for the digraph $D=\orient{G}$ as follows. Setting $z=-1$ in~\eqref{eq:q1-4} gives 
$$\sum_{\substack{R\uplus S\uplus T = A \\ D^{-T}_{/R}\textrm{ totally cyclic}}}\!\!\! y^{|S|}(-1)^{|T|} = (-1)^{\comp(D)}y^{|A|}\B_D\l(-1,-1/y,1\r),$$
which translates into
\begin{equation}\label{eq:fromq1-4}
y^{|A|-|E|}(y-1)^{|V|-\comp(D)}T_{D}^{(1)}\l(\frac{y-2}{y-1},y\r)=\sum_{\substack{R\uplus S\uplus T = A \\ D^{-T}_{/R}\textrm{ totally cyclic}}}\!\!\! y^{|S|}(-1)^{|R|}.
\end{equation}

Now suppose $D=(V,A)=\orient{G}$ is a graph. To a partition $R\uplus S\uplus T= A$, we associate the fourientation where an edge $e=\{a,-a\}$ of $G$ is oriented 1-way if one of the arcs $a,-a$ is in $S$ and the other is in $T$, and oriented 2-way in all the other cases. Observe that $D^{-T}_{/R}$ is totally cyclic if and only if the associated fourientation of $G$ is totally cyclic. Moreover, one can check that the 7 different configurations where the edge $e$ would be 2-ways in the associated fourientation have a total contribution of $y^2-2y$. Thus,~\eqref{eq:fromq1-4} gives
$$y^{|E|}(y-1)^{|V|-\comp(D)}T^{(1)}_{\orient{G}}\l(\frac{y-2}{y-1},y\r)=\sum_{\substack{\textrm{totally cyclic fourientations} \\ \textrm{without 0-way edge}}}y^{\# \textrm{1-way edges}}(y^2-2y)^{\# \textrm{2-way edges}},$$
which is equivalent to~\eqref{eq:fromBackman}.


We end this section by mentioning two mysterious identities which could deserve further investigation. We know that for any graph $G=(V,E)$, $T^{(1)}_{\orient{G}}(0,2)=T_G(0,2)$ is the number of totally cyclic orientations of $G$.
Hence, specializing~\eqref{eq:T1<=} to $(x,y)=(0,2)$, using~\eqref{eq:indicators} gives
\begin{equation}\label{eq:myster4}
2^{-|E|}\sum_{\substack{S\subseteq \orient{E} \\ \orient{G}_{\setminus S} \textrm{ totally cyclic}}}(-1)^{\comp(D_{\del S})-\comp(D)}=\#\textrm{ totally cyclic orientations of }G,
\end{equation}
where $\orient{E}$ is the arc-set of $\orient{G}$.
Similarly, specializing~\eqref{eq:T1<} to $(x,y)=(0,2)$, using~\eqref{eq:indicators} gives
\begin{equation}\label{eq:myster5}
2^{|E|}(-1)^{|V|-\comp(D)}\sum_{\substack{S\subseteq \orient{E} \\ \orient{G}_{\setminus S} \textrm{ acyclic}}}(-1/2)^{|\orient{E}\setminus S|}=\#\textrm{ totally cyclic orientations of }G.
\end{equation}

\begin{remark}
Equations~\eqref{eq:myster4} and~\eqref{eq:myster5} can be interpreted in terms of the fourientations of $G$. 
Equation~\eqref{eq:myster5} can be written as 
$$2^{|E|}(-1)^{|V|-\comp(D)}\sum_{\textrm{acyclic fourientation of $G$ without 2-way edges}}(-1/2)^{\# \textrm{ 1-way edge}}(1/4)^{\#\textrm{ 0-way edges}}=T_G(0,2).$$
and this follows easily from the results in~\cite{Backman:partial-orientations}.
As for~\eqref{eq:myster4}, Sam Hopkins communicated to us the following alternative proof. By setting $\lambda=1/2$, $\xi=-2$, $x=-2$, and $y=-1/2$ in Kung's convolution-multiplication formula~\cite[Identity 3]{Kung:convolutionTutte}, one gets 
\begin{equation}\label{eq:Kung-special} 
2^{-|E|}\sum_{S \subseteq E(G)} (-1)^{\comp(G_{\del S})-\comp(G)}2^{|V|-\comp(G_{\del S})} T_{G_{\del S}}(1/2,3)=T_G(0, 2),
\end{equation}
by using the fact that for any graph $H=(U,F)$, $T_H(-1,1/2)=(-1)^{|U|-\comp(H)}2^{|U|-\comp(H)-|F|}$. 
Moreover, by~\cite{Backman:fourientation-activities}, $2^{|U|-\comp(H)} T_{H}(1/2,3)$ is the number $\al_H$ of totally cyclic fourorientations of $H$ without 0-way edges.
Plugging this result in~\eqref{eq:Kung-special} gives
$$2^{-|E|} \sum_{S \subseteq E} (-1)^{\comp(G_{\del S})-\comp(G)} \al_{G_{\del S}}=T_G(0,2),$$
which is equivalent to~\eqref{eq:myster4}.
Although this proof bypasses our use of Ehrhart theory, it is rather indirect and one could hope for a more transparent combinatorial explanation.
\end{remark}

\begin{question}\label{question:mysterious-identities}
Is it possible to prove~\eqref{eq:myster4} and~\eqref{eq:myster5} by a direct combinatorial argument?
\end{question}

\smallskip
\section{A quasisymmetric function generalization of the $B$-polynomial}\label{sec:quasisym}
In this section, we study a refinement of the $B$-polynomial with infinitely many variables $\{x_i\}_{i\in\PP}$. This invariant counts colorings by ascents and descents, but also records the number of vertices colored $i$ for all $i\in \PP$. As will be clear from the definition, this invariant is quasisymmetric in the variables $\{x_i\}_{i\in\PP}$, and we study its expansion in the monomial and fundamental bases of quasisymmetric functions.

\subsection{Definition, and basic properties}
Let $\{x_i\}_{i\in\PP}$ be indeterminates, and let $\xx=(x_1,x_2,x_3,\ldots)$.
For a digraph $D=(V,A)$, we define
$$B_D(\xx;y,z)=\sum_{f:V\to\PP}\l(\prod_{v\in V}x_{f(v)}\r)y^{|f_A^{\As}|}z^{|f_A^{\Ds}|}.$$
We call this invariant the \emph{quasisymmetric $B$-polynomial}. 
It is clear that for any positive integer~$q$, 
\begin{equation}\label{eq:specialization-quasi}
B_D(1^q;y,z)~=~\sum_{f:V\to[q]}y^{|f_A^{\As}|}z^{|f_A^{\Ds}|}~=~B_D(q,y,z),
\end{equation}
where $1^q$ denotes the infinite sequence $(1,1,\ldots,1,0,0,\ldots)$, where the first $q$ coordinates are 1 and the remaining ones are 0. Hence $B_D(\xx;y,z)$ determines $B_D(q,y,z)$.

We now recall several specializations of $B_D(\xx;y,z)$ appearing in the literature.
Recall that for a graph $G=(V,E)$, the \emph{chromatic symmetric function} defined by Stanley in~\cite{Stanley:symmetric-chromatic} is
$$X_G(\xx)=\sum_{\substack{f:V\to\PP\\ f_E^{=}=\emptyset}}~\prod_{v\in V}x_{f(v)},$$
and the \emph{Tutte symmetric function} defined by Stanley in~\cite{Stanley:symmetric-chromatic-more} is 
$$S_G(\xx;y)=\sum_{f:V\to\PP}\l(\prod_{v\in V}x_{f(v)}\r)(1+y)^{|f^{=}_E|}.$$
The Tutte symmetric function is also equivalent to the $U$-polynomial defined by Noble and Welsh in~\cite{Noble-Welsh:Upolynomial}.
Recall also that in~\cite{Shareshian-Wachs:chromatic-quasisym-functions} Shareshian and Wachs defined the \emph{chromatic quasisymmetric function} of a digraph\footnote{Shareshian and Wachs actually consider labeled graphs instead of digraphs in~\cite{Shareshian-Wachs:chromatic-quasisym-functions}. Our equivalent definition~\eqref{eq:quasisym-chrom} is given in terms of the (acyclic) digraph $D$ obtained from the labeled graph by orienting every edge from the endpoint of smaller label to the endpoint of larger label.} $D=(V,A)$ as 
\begin{equation}\label{eq:quasisym-chrom}
\mX_D(\xx;y)=\sum_{\substack{f:V\to\PP \\ f_A^{=}=\emptyset}}\l(\prod_{v\in V}x_{f(v)}\r)y^{f_A^{\As}}.
\end{equation}
Clearly $S_G(\xx;-1)=X_G(\xx)$ for any graph $G$, and $\mX_D(\xx;1)=X_{\uD}(\xx)$ for any digraph $D$. 
We now explain the relations between these invariants and the quasisymmetric $B$-polynomial. 
First, it follows directly from the definitions that
\begin{equation}\label{eq:Share-inB} 
\mX_D(\xx;y)=[z^{|A|}]B_D(\xx;yz,z).
\end{equation}
Moreover, the relations between the $B$-polynomial and the Potts polynomial given in Theorem~\ref{thm:Bpoly-Tutte}
can easily be lifted to relations between $B_D(\xx;y,z)$ and $S_G(\xx,y)$:
\begin{thm}\label{thm:Potts123-multi}
For any graph $G=(V,E)$,
\begin{eqnarray}
B_{\orient{G}}(\xx;y,z)&=&(yz)^{|E|}S_G\l(\xx;\frac{1}{yz}-1\r),\label{eq:PottsOne-multi}\\
\frac{1}{2^{|E|}}\sum_{\vec{G}\in \ori(G)}B_{\vec{G}}(\xx;y,z)&=&\l(\frac{y+z}{2}\r)^{|E|}S_G\l(\xx;\frac{2}{y+z}-1\r).\label{eq:PottsTwo-multi}
\end{eqnarray}
Moreover, for any digraph $D$,
\begin{equation}
B_{D}(\xx;y,y)=y^{|E|}S_{\uD}\l(\xx;\frac{1}{y}-1\r).\label{eq:PottsThree-multi}
\end{equation}
\end{thm}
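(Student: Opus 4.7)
The plan is to prove all three identities by the same arguments as in the proofs of Propositions~\ref{PottsOne}, \ref{PottsTwo}, and~\ref{PottsThree}, simply carrying the extra monomial weight $\prod_{v\in V}x_{f(v)}$ along through the computation. Since this weight depends only on the coloring $f$, and not on the orientation or on individual arcs, it passes inertly through every rearrangement of the edge/arc contributions. Thus the key local identities established inside the proofs of Propositions~\ref{PottsOne}--\ref{PottsThree} lift verbatim, and the only new ingredient is the rewriting of monomial-weighted sums over colorings $f:V\to\PP$ in terms of the Tutte symmetric function $S_G(\xx;w)=\sum_{f:V\to\PP}\l(\prod_{v\in V}x_{f(v)}\r)(1+w)^{|f_E^=|}$.

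For \eqref{eq:PottsOne-multi}, I would expand the definition of $B_{\orient{G}}(\xx;y,z)$ and group the two opposite arcs of each edge. As shown in the proof of Proposition~\ref{PottsOne}, each edge $e=\{u,v\}\in E$ contributes a factor $yz$ when $f(u)\neq f(v)$ and a factor $1$ otherwise, so
$$B_{\orient{G}}(\xx;y,z)=\sum_{f:V\to\PP}\l(\prod_{v\in V}x_{f(v)}\r)(yz)^{|f_E^{\neq}|}.$$
Writing $|f_E^{\neq}|=|E|-|f_E^=|$ and using $1+\l(\tfrac{1}{yz}-1\r)=\tfrac{1}{yz}$ identifies this with $(yz)^{|E|}S_G(\xx;\tfrac{1}{yz}-1)$. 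The argument for \eqref{eq:PottsTwo-multi} proceeds analogously: I would swap the sums over orientations and over colorings, invoke the local identity from the proof of Proposition~\ref{PottsTwo} asserting that the two orientations of an edge $\{u,v\}$ contribute a total weight of $y+z$ or $2$ according as $f(u)\neq f(v)$ or $f(u)=f(v)$, and then distribute the factor $2^{-|E|}$ evenly across edges. The same algebraic manipulation with $1+\l(\tfrac{2}{y+z}-1\r)=\tfrac{2}{y+z}$ then matches the right-hand side.

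For \eqref{eq:PottsThree-multi}, I would set $z=y$ directly in the definition. Ascents and descents are merged, yielding the arc-weight $y^{|f_A^{\neq}|}$. Since the arc multiset $A$ of $D$ maps to the edge multiset $E$ of $\uD$ bijectively with multiplicity (so in particular $|A|=|E|$) and preserves the endpoint pair, we have $|f_A^{\neq}|=|f_E^{\neq}|$. Rewriting this as $|E|-|f_E^=|$ and factoring out $y^{|E|}$ produces $y^{|E|}S_{\uD}(\xx;\tfrac{1}{y}-1)$ via the definition of $S_{\uD}$.

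I do not foresee any genuine obstacle, since each step is local in the arcs and the quasisymmetric weight simply rides along unchanged. The only care required is the bookkeeping between $|f_E^=|$ and $|f_E^{\neq}|$ when converting the edge-based weights into the form $(1+w)^{|f_E^=|}$ needed for $S_G$; this is a trivial exponent manipulation using $|f_E^=|+|f_E^{\neq}|=|E|$.
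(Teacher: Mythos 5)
Your proposal is correct and matches the paper's proof, which simply states that the arguments of Propositions~\ref{PottsOne}, \ref{PottsTwo}, and \ref{PottsThree} extend verbatim with the monomial weight $\prod_{v\in V}x_{f(v)}$ carried along. Your additional bookkeeping converting $y^{|f_E^{\neq}|}$ into the $(1+w)^{|f_E^=|}$ form of $S_G$ is exactly the (implicit) change of variables the paper uses when relating the Potts polynomial to the Tutte symmetric function.
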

\begin{proof}
The proofs~\eqref{eq:PottsOne},~\eqref{eq:PottsTwo} and~\eqref{eq:PottsThree} extend verbatim to prove~\eqref{eq:PottsOne-multi},~\eqref{eq:PottsTwo-multi} and~\eqref{eq:PottsThree-multi}.
\end{proof}
Equations~\eqref{eq:Share-inB} and~\eqref{eq:PottsOne-multi} show that the quasisymmetric $B$-polynomial is a generalization of both the Tutte symmetric function (defined for graphs) and the chromatic quasisymmetric function (defined for acyclic digraphs).


In order to state other properties of $B_{D}(\xx;y,z)$, we need to recall some basic results about quasisymmetric functions.
Let $R$ be a ring. A \emph{quasisymmetric function in $\xx$} with coefficients in $R$ is a formal power series $f$ in the variables $\{x_i\}_{i\in \PP}$ with coefficients in $R$, such that the degrees of the monomials occurring in $f$ are bounded, and for all positive integers $k$, $\de_1,\ldots,\de_k$, $i_1<i_2<\ldots<i_k$ and $j_1<j_2<\ldots<j_k$,
\begin{equation}\label{eq:defQSym}
[x_{i_1}^{\de_1}x_{i_2}^{\de_2}\cdots x_{i_k}^{\de_k}]f=[x_{j_1}^{\de_1}x_{j_2}^{\de_2}\cdots x_{j_k}^{\de_k}]f.
\end{equation}
A \emph{symmetric function} is a quasisymmetric function such that~\eqref{eq:defQSym} holds for any tuples $(j_1,j_2,\ldots,j_k)$ of distinct integers (not necessarily increasing). 
We denote by $\QSym_R(\xx)$ (resp. $\Sym_R(\xx)$) the set of quasisymmetric functions (resp. symmetric function) in $\xx$ with coefficients in $R$. This set clearly has the structure of an $R$-algebra.
We denote by $\QSym_R^n(\xx)$ (resp. $\Sym_R^n(\xx)$) the submodule of $\QSym_R(\xx)$ (resp. $\Sym_R(\xx)$) made of the series $f$ which are homogeneous of degree $n$. Recall that a \emph{composition of $n$} is a tuple of positive integers summing to $n$, and that the notation $(\de_1,\ldots,\de_k)\vDash n$ means that $(\de_1,\ldots,\de_k)$ is a composition of $n$.
For $\de=(\de_1,\ldots,\de_k)\vDash n$, we define the \emph{quasisymmetric monomial function} $M_{\de}\in \QSym_R^n(\xx)$ by
$$M_\de=\sum_{i_1<i_2<\cdots<i_k}x_{i_1}^{\de_1}x_{i_2}^{\de_2}\cdots x_{i_k}^{\de_k},$$
where the sum is over increasing $k$-tuples of positive integers. 
It is clear that $\{M_\de\}_{\de\vDash n}$ is a basis of $\QSym_R^n(\xx)$. For $f\in\QSym_R(\xx)$, we denote by $[M_\de]f$ the coefficient of $M_\de$ in the expansion of $f$ in the basis $\{M_\de\}_{\de}$. 

It is intuitively clear that $B_D(\xx;y,z)$ is in $\QSym_{\ZZ[x,y]}^{|V|}(\xx)$ (since an order-preserving modification of the colors of a coloring $f$ of $D$ does not change the number of ascents and descents). We now give the expansion of $B_D(\xx;y,z)$ in the basis $\{M_\de\}_{\de\vDash |V|}$.
\begin{prop}\label{prop:monomial-basis}
For any digraph $D=(V,A)$, 
\begin{equation}\label{eq:monomial-basis}
B_D(\xx;y,z)=\sum_{p=1}^{|V|}\sum_{g\in \Surj(V,p)}M_{\l(|g^{-1}(1)|,|g^{-1}(2)|,\ldots,|g^{-1}(p)|\r)}y^{|g_A^{\As}|}z^{|g_A^{\Ds}|},
\end{equation}
where $\Surj(V,p)$ is the set of surjective maps from $V$ to $[p]$.
\end{prop}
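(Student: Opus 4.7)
The plan is to mimic the reduction to surjective colorings used in the proof of Theorem~\ref{thm:existence-B}, but now keep track of the weight $\prod_{v\in V}x_{f(v)}$ in addition to the ascent/descent statistics. The key observation, already exploited in that earlier proof, is that if $f:V\to\PP$ is a coloring, $p=|f(V)|$, and $\varphi:f(V)\to[p]$ is the unique order-preserving bijection, then $\tf=\varphi\circ f$ is a surjective $p$-coloring satisfying $|f_A^{\As}|=|\tf_A^{\As}|$ and $|f_A^{\Ds}|=|\tf_A^{\Ds}|$.

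First, I would partition the sum defining $B_D(\xx;y,z)$ according to the value of $p=|f(V)|$ and the associated surjective $p$-coloring $g=\tf$, writing
$$B_D(\xx;y,z)=\sum_{p=1}^{|V|}\sum_{g\in\Surj(V,p)}y^{|g_A^{\As}|}z^{|g_A^{\Ds}|}\!\!\sum_{\substack{f:V\to\PP\\ \tf=g}}\prod_{v\in V}x_{f(v)}.$$
Next, I would observe that the colorings $f$ with $\tf=g$ are in bijection with strictly increasing $p$-tuples $i_1<i_2<\cdots<i_p$ of positive integers, via $f(v)=i_{g(v)}$. For such $f$ one has
$$\prod_{v\in V}x_{f(v)}=\prod_{j=1}^{p}x_{i_j}^{|g^{-1}(j)|},$$
and summing over all increasing tuples $i_1<\cdots<i_p$ produces precisely the monomial quasisymmetric function $M_{(|g^{-1}(1)|,\ldots,|g^{-1}(p)|)}$. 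Substituting this back yields~\eqref{eq:monomial-basis}.

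There is no real obstacle here: the statement is essentially a bookkeeping refinement of the identity~\eqref{eq:q-falling-factorial}, replacing the factor $\binom{q}{p}$ (which counts the choices of $p$-element subsets of $[q]$) by the quasisymmetric monomial $M_{(|g^{-1}(1)|,\ldots,|g^{-1}(p)|)}$ (which records those same choices with multiplicities tracked by the variables $x_i$). As a sanity check, specializing $x_i=1$ for $i\in[q]$ and $x_i=0$ otherwise collapses $M_\de$ to $\binom{q}{\ell(\de)}$ and recovers~\eqref{eq:q-falling-factorial} via~\eqref{eq:specialization-quasi}. As a byproduct, the proof also makes manifest that $B_D(\xx;y,z)\in\QSym_{\ZZ[y,z]}^{|V|}(\xx)$, since the right-hand side is visibly a $\ZZ[y,z]$-linear combination of monomial quasisymmetric functions indexed by compositions of $|V|$.
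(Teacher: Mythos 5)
Your proof is correct and follows essentially the same route as the paper's: reduce to surjective colorings exactly as in the proof of Theorem~\ref{thm:existence-B}, then identify the inner sum over colorings $f$ with $\tf=g$ as the monomial quasisymmetric function $M_{(|g^{-1}(1)|,\ldots,|g^{-1}(p)|)}$. Your explicit bijection with increasing tuples and the sanity check at $\xx=1^q$ are nice touches, but the argument is the same.
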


\begin{example}\label{exp:Monomial}
For the digraphs $D$, $D'$, $D''$ represented in Figure~\ref{fig:exp-quasi}, one gets 
\begin{eqnarray*}
B_D(\xx;y,z)&=&(y+z)M_{(1,1)}+M_{(2)},\\
B_{D'}(\xx;y,z)&=&(y^2+z^2+4yz)M_{(1,1,1)}+(yz+y+z)(M_{(1,2)}+M_{(2,1)})+M_{(3)},\\
B_{D''}(\xx;y,z)&=&2(y^2+z^2+yz)M_{(1,1,1)}+(z^2+2y)M_{(1,2)}+(y^2+2z)M_{(2,1)}+M_{(3)}.
\end{eqnarray*}
\end{example}

\fig{width=.9\linewidth}{exp-quasi}{Three (compatibly labeled) acyclic digraphs.}

\begin{proof}[Proof of Proposition~\ref{prop:monomial-basis}]
The proof is almost the same as that of Theorem~\ref{thm:existence-B}. Using the notation introduced in that proof, one gets
\begin{eqnarray*}
\sum_{f: V\to \PP}\l(\prod_{v\in V}x_{f(v)}\r)y^{|f_A^{\As}|}z^{|f_A^{\Ds}|}&=&\sum_{p=1}^{|V|}~\sum_{g\in \Surj(V,p)}y^{|g_A^{\As}|}z^{|g_A^{\Ds}|}\sum_{f:[V]\to [q]\textrm{ such that } \tf=g}\l(\prod_{v\in V}x_{f(v)}\r).
\end{eqnarray*}
Moreover, it is easy to see that for any $g\in \Surj(V,p)$, 
$$\ds\sum_{f:[V]\to [q]\textrm{ such that } \tf=g}\l(\prod_{v\in V}x_{f(v)}\r)=M_{\l(|g^{-1}(1)|,|g^{-1}(2)|,\ldots,|g^{-1}(p)|\r)},$$
which gives~\eqref{prop:monomial-basis}.
\end{proof}

\begin{remark}\label{rk:varphi}
Consider the linear map $\varphi$ from $\QSym_R(\xx)$ to $R[q]$ (the ring of polynomials in $q$ with coefficients in $R$) which for any composition $(\de_1,\ldots,\de_k)\vDash n$ sends the basis element $M_\de$ to the polynomial $\frac{q(q-1)\ldots(q-k+1)}{k!}$. It is clear that for any positive integer~$q$ the map $\varphi$ coincides with the evaluation map at $\xx=1^q$. Hence $\varphi\l(B_D(\xx;y,z)\r)=B_D(q,y,z)$, and Proposition~\ref{prop:monomial-basis} is a refinement of the expansion~\eqref{eq:q-falling-factorial} of $B_D(q,y,z)$ in the ``falling factorial'' basis of polynomials.
\end{remark}

We now state the analogue of Proposition~\ref{prop:easy} for $B_D(\xx;y,z)$. Let $\rho$ be the linear map on $\QSym_R(\xx)$ sending each basis element $M_{\de_1,\de_2,\ldots,\de_k}$ to $M_{\de_k,\ldots,\de_2,\de_1}$. It is clear that $\rho$ is an involution (intuitively, $\rho$ ``reverses the order of the variables'').

\begin{prop}\label{prop:easy-quasisym} 
For any digraph $D=(V,A)$, the quasisymmetric $B$-polynomial has the following properties.
\begin{compactenum}[(a')]
\item\label{it1quasi} $\rho(B_D(\xx;y,z))=B_D(\xx;z,y)$. 
\item\label{it2quasi} $B_{D^{-A}}(\xx;y,z)=B_D(\xx;z,y)$.
\item\label{it3quasi} $\ds \sum_{f: V\to \PP}\l(\prod_{v\in V}x_{f(v)}\r)u^{\l|f_A^=\r|}y^{\l|f_A^{\As}\r|}z^{\l|f_A^{\Ds}\r|}=u^{|A|}B_D\l(\xx;\frac{y}{u},\frac{z}{u}\r)$.
\item\label{it4quasi} $\ds \sum_{f: V\to \PP}\l(\prod_{v\in V}x_{f(v)}\r)y^{\l|f_A^{\wAs}\r|}z^{\l|f_A^{\wDs}\r|}=(yz)^{|A|}B_D\l(\xx;\frac{1}{z},\frac{1}{y}\r)$.
\item\label{it5quasi} The quasisymmetric $B$-polynomial of the digraph with a single vertex and no arcs is $M_{(1)}=\sum_{i\in \PP}x_i$.
\item\label{it6quasi} If $a=(u,u)\in A$ is a loop, then $B_{D_{\setminus a}}(\xx;y,z)=B_D(\xx;y,z)$.
\item\label{it7quasi} If $D$ is the disjoint union of two digraphs $D_1$ and $D_2$, then $B_{D}(\xx;y,z)=B_{D_1}(\xx;y,z)\,B_{D_2}(\xx;y,z)$.
\item\label{it8quasi} 
If $D$ has no loops, $\deg_y(B_D(\xx;y,y))=|A|$.
\addtocounter{enumi}{1}
\item\label{it10quasi} The expansion of the polynomial $B_D(\xx;y,z)$ in the basis $\{M_{\de}y^iz^j\}_{\de\vDash |V|,i,j\geq 0}$ has positive integer coefficients. 
\item\label{it11quasi} $[M_{1^{|V|}}]B_D(\xx;1,0)=\sum_{\textrm{bijection }f:V\to[|V|]}y^{\l|f_A^{\As}\r|}z^{\l|f_A^{\Ds}\r|}$.
\item\label{it12quasi} For any $k$ in $[|V|]$, $[M_{(k,|V|-k)}]B_D(\xx;1,0)$ is the number of subsets of vertices $U$ of size $k$ such that every arc joining $U$ to $V\setminus U$ is oriented away from $U$.
\end{compactenum}
\end{prop}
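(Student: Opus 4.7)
The plan is to derive Proposition~\ref{prop:easy-quasisym} as the quasisymmetric upgrade of Proposition~\ref{prop:easy}: for each item, the same bookkeeping that worked in the scalar case carries over once one inserts the weight $\prod_{v\in V}x_{f(v)}$ into the sum. The one genuine subtlety is (a'), because the color-reversing involution $i\mapsto q+1-i$ on $[q]$ used for the scalar property~(a) does not preserve this weight. That is the main obstacle, and I would address it first, using Proposition~\ref{prop:monomial-basis} to localize the argument to each surjective coloring.

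For (a'), I work from the monomial-basis expansion~\eqref{eq:monomial-basis}. Fix $p$ and define $g\mapsto g'$ on $\Surj(V,p)$ by $g'(v):=p+1-g(v)$; this is an involution satisfying $|(g')_A^{\As}|=|g_A^{\Ds}|$ and $|(g')^{-1}(i)|=|g^{-1}(p+1-i)|$. Hence the composition $(|(g')^{-1}(1)|,\ldots,|(g')^{-1}(p)|)$ is the reversal of $(|g^{-1}(1)|,\ldots,|g^{-1}(p)|)$, and since $\rho$ acts on $M_\delta$ by reversal of $\delta$, summing over $g$ in~\eqref{eq:monomial-basis} yields $\rho(B_D(\xx;y,z))=B_D(\xx;z,y)$. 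Then (b') is immediate: reversing every arc exchanges $f_A^{\As}$ and $f_A^{\Ds}$, so $B_{D^{-A}}(\xx;y,z)=B_D(\xx;z,y)$ directly from the definition. Items (c') and (d') are algebraic, using $|f_A^=|+|f_A^{\As}|+|f_A^{\Ds}|=|A|$ and $|f_A^{\wAs}|=|A|-|f_A^{\Ds}|$ respectively; items (e')-(g') are immediate from the definition of $B_D(\xx;y,z)$; and (h') follows because a loopless $D$ admits injective colorings, which are automatically proper and contribute a term of total degree $|A|$ in $y$ to $B_D(\xx;y,y)$.

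The remaining items all reduce to Proposition~\ref{prop:monomial-basis}. Item (j') is the observation that every coefficient in~\eqref{eq:monomial-basis} is manifestly a nonnegative integer polynomial in $y$ and $z$. For (k'), only surjections $g:V\to[|V|]$ contribute to the coefficient of $M_{(1,\ldots,1)}$, and these are exactly the bijections $V\to[|V|]$. For (l'), the coefficient of $M_{(k,|V|-k)}$ is supported on surjective $2$-colorings $g$ with $|g^{-1}(1)|=k$, and specializing to $z=0$ further restricts to those $g$ with $g_A^{\Ds}=\emptyset$; such $g$ correspond bijectively to subsets $U=g^{-1}(1)\subset V$ of size $k$ for which every arc joining $U$ to $V\setminus U$ is oriented away from $U$, which is the stated count. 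In short, once (a') is secured through Proposition~\ref{prop:monomial-basis}, everything else is a routine transcription of the scalar proof.
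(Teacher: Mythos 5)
Your proof is correct and follows essentially the same route as the paper: the paper likewise establishes (a') via the involution $g\mapsto p+1-g$ on $\Surj(V,p)$ applied to the monomial expansion~\eqref{eq:monomial-basis}, and then notes that the remaining items are proved with the same arguments as Proposition~\ref{prop:easy}. Your write-up simply makes those transcriptions explicit.
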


\begin{proof} 
Property~\eqref{it1quasi} follows from~\eqref{eq:monomial-basis} by considering the involution on $\Surj(V,p)$ which associates to any function $f\in\Surj(V,p)$ the function $\bar{f}:v\mapsto p+1-f(v)$. The other assertions are proved with the same arguments as for Proposition~\ref{prop:easy}.
\end{proof}

\begin{remark}\label{rk:detect-quasi}
Observe that 
$$[M_{1,|V|-1}]B_D(\xx;y,z)=\sum_{v\in V}y^{\out(v)}z^{\ind(v)},$$
where $\out(v)$ and $\ind(v)$ are the outdegree and indegree of $v$ respectively. Hence, the indegree and outdegree distribution can be read off $B_D(\xx;x,y)$. In particular, knowing $B_D(\xx;x,y)$ (and $|A|$) is sufficient to detect whether $D$ is a \emph{directed tree} (an oriented tree such that every vertex except one has indegree 1). This is in contrast with $B_D(q;y,z)$ as explained in Remark~\ref{rk:not-detected}.

We now show that the \emph{profile} of an acyclic digraph can be obtained from its quasisymmetric $B$-polynomial. For a vertex $v$ of an acyclic digraph $D$, we call \emph{height} the maximal number of vertices on a directed path of $D$ ending at $v$. We call \emph{profile} of $D$, the composition $\de=(\de_1,\ldots,\de_k)\vDash |V|$, where $\de_i$ is the number of vertices having height $i$. It is not hard to see that the profile of a digraph $D=(V,A)$ is the largest composition $\de$ for the lexicographic order, such that $[M_{\de}][y^{|A|}]B_D(\xx;y,z)\neq 0$. 
\end{remark}

Next, we define the quasisymmetric version of the strict and weak chromatic polynomials.
We define
$$\chi^{\As}_D(\xx)=[y^A]B_D(\xx;y,1)=\sum_{f:V\to\PP,~ f^{\As}_A=A}\prod_{v\in V}x_{f(v)},$$
and 
$$\chi^{\wAs}_D(\xx)=B_D(\xx;1,0)=\sum_{f:V\to\PP,~ f^{\wAs}_A=A}\prod_{v\in V}x_{f(v)}.$$
 
The first two identities of Theorem~\ref{thm:expansions} can be lifted to the quasisymmetric setting.
\begin{thm}
For any digraph $D=(V,A)$,
\begin{eqnarray}
\sum_{R\uplus S\uplus T= A} y^{|S|}z^{|T|}\chrom_{D^{-T}_{\setminus R}}^{\As}(\xx)&=& \Q_D(\xx;1+y,1+z),\label{>Delete-quasi}\\
\sum_{R\uplus S\uplus T= A} y^{|S|}z^{|T|}\chrom_{D^{-T}_{\setminus R}}^{\wAs}(\xx)&=&(1+y+z)^{|A|}\B_D\left(\xx;\frac{1+y}{1+y+z},\frac{1+z}{1+y+z}\right).\label{geqDelete-quasi}
\end{eqnarray}
\end{thm}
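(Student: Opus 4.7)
The plan is to mimic the proof of Theorem~\ref{thm:expansions} verbatim, inserting the monomial weight $\prod_{v\in V}x_{f(v)}$ throughout. First, I would write
$$B_D(\xx;y,z)=\sum_{f:V\to\PP}\l(\prod_{v\in V}x_{f(v)}\r)\prod_{(u,v)\in A}\theta(f(u)-f(v)),$$
where $\theta(n)=y$ if $n<0$, $\theta(n)=z$ if $n>0$, and $\theta(0)=1$, exactly as in that earlier proof. Then apply the two identities~\eqref{eq:<delete} and~\eqref{eq:leq-delete} for $\theta$, expand the resulting product over arcs, and swap the sum over $f$ with the induced sum over partitions $R\uplus S\uplus T=A$.

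The key identification in the quasisymmetric setting is
$$\sum_{f:V\to\PP}\l(\prod_{v\in V}x_{f(v)}\r)\ONE_{S\subseteq f_A^{\As},\,T\subseteq f_A^{\Ds}}=\chrom_{D^{-T}_{\setminus R}}^{\As}(\xx),$$
together with the analogue for weak ascents/descents giving $\chrom_{D^{-T}_{\setminus R}}^{\wAs}(\xx)$. This is the same observation used in the proof of Theorem~\ref{thm:expansions}: the arcs of $R$ are absent from $D^{-T}_{\setminus R}$ and impose no condition on $f$, while for $(u,v)\in T$ the requirement $(u,v)\in f_A^{\Ds}$ is precisely the strict ascent condition on the reoriented arc $(v,u)$. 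Crucially, the weight $\prod_v x_{f(v)}$ depends only on $f$ and not on the partition $(R,S,T)$, so it passes cleanly through the manipulations.

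Putting these together, \eqref{eq:<delete} gives
$$B_D(\xx;y,z)=\sum_{R\uplus S\uplus T=A}(y-1)^{|S|}(z-1)^{|T|}\chrom_{D^{-T}_{\setminus R}}^{\As}(\xx),$$
from which~\eqref{>Delete-quasi} follows by the substitution $y\mapsto 1+y$, $z\mapsto 1+z$. Similarly, \eqref{eq:leq-delete} yields
$$B_D(\xx;y,z)=\sum_{R\uplus S\uplus T=A}(y+z-1)^{|R|}(1-z)^{|S|}(1-y)^{|T|}\chrom_{D^{-T}_{\setminus R}}^{\wAs}(\xx),$$
and the substitutions $y\mapsto (1+y)/(1+y+z)$, $z\mapsto (1+z)/(1+y+z)$ convert the coefficient of each term into $(1+y+z)^{-|A|}y^{|S|}z^{|T|}$ (since $y+z-1$, $1-z$, $1-y$ become $1/(1+y+z)$, $y/(1+y+z)$, $z/(1+y+z)$ respectively, and $|R|+|S|+|T|=|A|$); multiplying through by $(1+y+z)^{|A|}$ then gives~\eqref{geqDelete-quasi}.

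There is no genuine obstacle here: the argument lifts mechanically because deletion and reorientation preserve the vertex set. The only mildly interesting remark is that this is presumably why the theorem omits contraction analogues of~\eqref{>Contract} and~\eqref{geqContract}: contracting $R$ alters the vertex set, so the natural inner sum would carry a weight $\prod_c x_{f(c)}^{|c|}$ running over the connected components $c$ of $(V,R)$, and this does not match the plain $\chrom^{\As}$ or $\chrom^{\wAs}$ of $D_{/R}$.
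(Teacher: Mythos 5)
Your proposal is correct and follows exactly the route the paper takes: its proof of this theorem is literally ``the proof is the same as for \eqref{>Delete} and \eqref{geqDelete},'' and you have filled in precisely those details, correctly noting that the monomial weight $\prod_{v}x_{f(v)}$ is unaffected by deletion and reorientation. Your closing remark about why the contraction identities \eqref{>Contract} and \eqref{geqContract} have no quasisymmetric analogue is also accurate.
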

\begin{proof} The proof is the same as for~\eqref{>Delete} and~\eqref{geqDelete}. 
\end{proof}

\subsection{Expansion in the fundamental basis, via the theory of $P$-partitions}
We will now use the theory of $P$-partitions in order to express $\chi^{\As}_D(\xx)$, $\chi^{\wAs}_D(\xx)$ and $B_D(\xx,y,1)$ in terms of fundamental quasisymmetric functions.
For a subset $S$ of $[n-1]$, we define the \emph{fundamental quasisymmetric function} $F_{n,S}\in \QSym_R^n(\xx)$ by 
$$F_{n,S}=\sum_{\substack{i_1\leq i_2\leq \cdots\leq i_n\\ i_s<i_{s+1} \textrm{ if }s\in S}}x_{i_1}x_{i_2}\cdots x_{i_n}.$$
It is well known that $\{F_{n,S}\}_{S\subseteq [n-1]}$ is a basis of $\QSym_R^n(\xx)$ (see e.g.~\cite{Stanley:volume2}). For $f\in\QSym_R^n(\xx)$, we denote by $[F_{n,S}]f$ the coefficient of $F_{n,S}$ in the expansion of $f$ in the basis $\{F_{n,S}\}_{S\subseteq [n-1]}$. 

\begin{remark}\label{rk:varphi2}
Consider the linear map $\varphi$ from $\QSym_R(\xx)$ to $R[q]$ sending each basis element $F_{n,S}$ to the polynomial $\frac{(q-|S|)(q-|S|+1)\ldots(q-|S|+n-1)}{n!}$. It is clear that for any positive integer~$q$, the map $\varphi$ coincides with the evaluation map at $\xx=1^q$. Hence, $\varphi$ is the same as the linear map considered in Remark~\ref{rk:varphi}, and $\varphi\l(B_D(\xx;y,z)\r)=B_D(q,y,z)$.
\end{remark}

We now recall some basic results about $P$-partitions. The reader can refer to~\cite{Stanley:volume1-ed2} for some background. For convenience, we will state definitions and results in terms of digraphs instead of posets.


A digraph with vertex set $[n]$ is called a \emph{labeled digraph}. We say that a labeled digraph $D$ has a \emph{compatible labeling} (resp. \emph{anticompatible labeling}) if for all $(u,v)\in A$, $u<v$ (resp. $u>v$).
\begin{definition}
Let $D=([n],A)$ be a labeled directed graph.
A \emph{$D$-partition} is a function $f:[n]\to \PP$ such that 
\begin{compactitem}
\item for all $(u,v)\in A$, $f(u)\leq f(v)$, 
\item for all $(u,v)\in A$ with $u<v$, $f(u)<f(v)$.
\end{compactitem}
We denote by $\mP_D$ the set of $D$-partitions\footnote{The reader familiar with the theory of $P$-partitions will recognize that $\mP_D$ is the set of $(P,\Id)$-partitions for the poset $P=([n],\preceq)$, where $u\preceq v$ means that there exists a directed path from $v$ to $u$.}, and by $\mP_D(q)$ the subset of $D$-partitions $f$ such that $f(V)\subseteq [q]$.
\end{definition}

\begin{example}
For the labeled digraph $D$ on the left of Figure~\ref{fig:P-partition}, we get $\mP_{D}=\{f:[3]\to \PP~|~f(1)<f(2) \textrm{ and } f(1)<f(3)\}$.
For the labeled digraph $D'$ on the right of Figure~\ref{fig:P-partition}, we get $\mP_{D'}=\{f:[3]\to \PP~|~f(2)\leq f(1) \textrm{ and } f(2)<f(3)\}$.
\end{example}

\fig{width=.4\linewidth}{P-partition}{Two labeled digraphs.}

Clearly, $\mP_D$ is empty unless $D$ is acyclic. 
Moreover, if $D$ is an acyclic digraph with a compatible labeling, then $\mP_D(q)$ is the set of $q$-colorings with only strict ascents, so that $|\mP_D(q)|=\chi^{\As}_D(q)$. 
Similarly, if $D$ is an acyclic digraph with an anticompatible labeling, $\mP_{D}(q)$ is the set of $q$-colorings with only weak ascents, so that $|\mP_D(q)|=\chi^{\wAs}_D(q)$.


We call \emph{linear extension} of a digraph $D=([n],A)$ a permutation $\si\in\fS_n$ such that for all $(u,v)\in A$, $\si^{-1}(u)< \si^{-1}(v)$ (equivalently, $u$ appears before $v$ in the \emph{one-line notation} $\si(1)\si(2)\cdots\si(n)$ of $\si$).
Let $\mL(D)$ be the set of linear extensions of $D$. For instance, for the digraph $D$ in Figure~\ref{fig:P-partition}, $\mL(D)=\{123,132\}$. 
For a permutation $\si$ of $[n]$, we denote by $\mP_{\si}$ the set of functions $f:[n]\to \PP$ such that 
\begin{compactitem}
\item for all $i\in[n-1]$, $f(\si(i))\leq f(\si(i+1))$,
\item for all $i\in[n-1]$ with $\si(i)<\si(i+1)$, $f(\si(i))<f(\si(i+1))$.
\end{compactitem}

We now state, in terms of labeled digraphs, the fundamental lemma of $P$-partitions (see~\cite[Lemma 3.15.3]{Stanley:volume1-ed2}).
\begin{lemma}\label{lem:Ppartition-lemma}
For any labeled digraph $D=([n],A)$,
$$\mP_{D}=\biguplus_{\si \in \mL(D)}\mP_\si.$$
\end{lemma}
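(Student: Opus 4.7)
The plan is to define, for each function $f:[n]\to\PP$, a canonical permutation $\si_f\in\fS_n$, and to show two things: (i) $f\in\mP_\si$ if and only if $\si=\si_f$, and (ii) $f\in\mP_D$ if and only if $\si_f\in\mL(D)$. Together these statements prove the claim: property (i) implies that the $\mP_\si$ are pairwise disjoint and that every $f:[n]\to\PP$ lies in exactly one $\mP_\si$ (namely for $\si=\si_f$), while (ii) then tells us that the $f$'s collected over $\si\in\mL(D)$ are exactly the $D$-partitions.

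To construct $\si_f$, I would sort the vertices $[n]$ so that $f$-values are weakly increasing, breaking ties in \emph{strictly decreasing} order of vertex label. Equivalently, $\si_f$ is the unique permutation whose one-line notation $\si_f(1)\si_f(2)\cdots\si_f(n)$ satisfies $f(\si_f(i))\leq f(\si_f(i+1))$ for all $i\in[n-1]$, and $\si_f(i)>\si_f(i+1)$ whenever $f(\si_f(i))=f(\si_f(i+1))$. For (i), observe that the two defining conditions of $\mP_\si$ translate contrapositively into exactly these two properties of the one-line notation of $\si$; so any $\si$ with $f\in\mP_\si$ must equal $\si_f$, and conversely $f\in\mP_{\si_f}$ by construction.

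For (ii), I would check arc-by-arc that, for each $(u,v)\in A$, the condition ``$u$ precedes $v$ in the one-line notation of $\si_f$'' is equivalent to the defining inequality of $\mP_D$ on the pair $(u,v)$. If $u<v$, then $u$ precedes $v$ in $\si_f$ iff $f(u)<f(v)$ (equality is excluded because the tie-breaking rule would then place $v$ before $u$), which matches the strict condition required of a $D$-partition. If $u>v$, then $u$ precedes $v$ in $\si_f$ iff either $f(u)<f(v)$ or $f(u)=f(v)$ (in which case the tie-breaking rule places the larger label $u$ first), i.e.\ iff $f(u)\leq f(v)$, which again matches the (weak) condition required of a $D$-partition. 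Ranging over all arcs gives (ii).

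The main subtlety is aligning the tie-breaking convention in the construction of $\si_f$ with the asymmetric roles of strict and weak inequalities in the definitions of $\mP_\si$ and $\mP_D$. The decreasing-label rule is forced by $\mP_\si$ (strict ascent only when $\si(i)<\si(i+1)$) and it must be verified to dovetail with the asymmetry in $\mP_D$ (strict inequality only along arcs $(u,v)$ with $u<v$); the two asymmetries are precisely compatible, which is the content of (ii). Everything else is a routine unraveling of the definitions.
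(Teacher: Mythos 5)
Your proof is correct and is essentially the standard argument for the fundamental lemma of $P$-partitions, which the paper does not reprove but cites from Stanley \cite[Lemma 3.15.3]{Stanley:volume1-ed2}: one sorts the vertices by $f$-value with the decreasing-label tie-break to obtain the unique $\si$ with $f\in\mP_\si$, and then checks arc by arc that $f\in\mP_D$ is equivalent to $\si_f\in\mL(D)$. The only (harmless) caveat is that your case analysis in (ii) tacitly assumes $D$ has no loops --- as does the lemma itself, since a loop $(u,u)$ makes $\mL(D)$ empty without forcing $\mP_D$ to be empty --- and the lemma is only applied to compatibly or anticompatibly labeled digraphs, which are loopless.
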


\begin{example}
For the labeled digraph $D$ on the left of Figure~\ref{fig:P-partition}, we have $\mL(D)=\{\si,\pi\}$ with $\si=123$ and $\pi=132$.
Accordingly, 
$$\mP_{D}=\mP_\si\uplus\mP_\pi=\{f:[3]\to \PP~|~f(1)<f(2)<f(3)\}\uplus\{f:[3]\to \PP~|~f(1)<f(3)\leq f(2)\}.$$
For the labeled digraph $D'$ on the right of Figure~\ref{fig:P-partition}, we have $\mL(D')=\{\si',\pi'\}$ with $\si'=213$ and $\pi'=231$.
Accordingly, 
$$\mP_{D'}=\mP_{\si'}\uplus\mP_{\pi'}=\{f:[3]\to \PP~|~f(2)\leq f(1)<f(3)\}\uplus\{f:[3]\to \PP~|~f(2)<f(3)\leq f(1)\}.$$
\end{example}


The fundamental lemma~\ref{lem:Ppartition-lemma} implies the following result about the invariants $\chi^{\As}_D(\xx)$ and $\chi^{\wAs}_D(\xx)$, which is simply a reformulation of the classical result~\cite[Corollary 7.19.5]{Stanley:volume2} in our digraph setting.

\begin{lemma}\label{lem:chi-in-Fn}
For any compatibly labeled acyclic digraph $D=([n],A)$,
$$\chi^{\As}_D(\xx)=\sum_{\si\in \mL(D)}F_{n,\Asc(\si)},$$
where $\Asc(\si)=\{i\in [n-1]~|~\si(i)<\si(i+1)\}$ is the ascent set of $\si$.
For any anticompatibly labeled acyclic digraph $D=([n],A)$,
$$\chi^{\wAs}_D(\xx)=\sum_{\si\in \mL(D)}F_{n,\Asc(\si)}.$$
\end{lemma}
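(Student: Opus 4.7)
The strategy is to interpret both identities as instances of the standard generating-function argument for $P$-partitions, once we match our strict/weak-chromatic quasisymmetric functions with $\mP_D$-partitions.

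The first step is to recognize, directly from the definitions, that when $D=([n],A)$ is compatibly labeled (so $u<v$ for every arc $(u,v)\in A$), the two conditions defining a $D$-partition collapse to the single condition $f(u)<f(v)$ for all $(u,v)\in A$. In other words, $\mP_D$ is exactly the set of maps $f:[n]\to\PP$ such that $f_A^{\As}=A$, and consequently
$$\chi^{\As}_D(\xx)=\sum_{f\in\mP_D}\prod_{v\in [n]} x_{f(v)}.$$
The analogous observation for an anticompatibly labeled acyclic $D$ (all arcs $(u,v)$ have $u>v$) reduces the $D$-partition conditions to $f(u)\leq f(v)$ for all $(u,v)\in A$, yielding $\mP_D=\{f : f_A^{\wAs}=A\}$ and therefore $\chi^{\wAs}_D(\xx)=\sum_{f\in\mP_D}\prod_v x_{f(v)}$.

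Next I would apply Lemma~\ref{lem:Ppartition-lemma} to decompose $\mP_D=\biguplus_{\si\in\mL(D)}\mP_{\si}$. This reduces both identities to the single computation
$$\sum_{f\in\mP_{\si}}\prod_{v\in [n]} x_{f(v)}=F_{n,\Asc(\si)}\qquad\text{for all }\si\in\fS_n.$$
To verify this, I would make the change of variables $i_k\defeq f(\si(k))$. By the definition of $\mP_{\si}$, the tuple $(i_1,\ldots,i_n)$ ranges over sequences of positive integers satisfying $i_1\leq i_2\leq\cdots\leq i_n$ with the strict inequality $i_k<i_{k+1}$ imposed exactly when $\si(k)<\si(k+1)$, i.e.\ when $k\in\Asc(\si)$. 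Since $\prod_{v\in[n]} x_{f(v)}=\prod_{k=1}^n x_{i_k}$, the sum is precisely $F_{n,\Asc(\si)}$ by the definition of the fundamental quasisymmetric function.

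Combining these two steps gives both formulas simultaneously. There is no genuine obstacle here: the lemma is essentially a repackaging of the classical $P$-partition generating-function identity, and the only mild point to handle carefully is the correct translation between strict/weak ascents in $\chi^{\As}_D,\chi^{\wAs}_D$ and the two compatibility conventions (compatible versus anticompatible labelings) that determine whether the order relations defining $\mP_D$ are strict or weak on each arc.
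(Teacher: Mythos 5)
Your proposal is correct and follows essentially the same route as the paper's proof: identify $\chi^{\As}_D(\xx)$ (resp.\ $\chi^{\wAs}_D(\xx)$) with the generating function of $\mP_D$ under the compatible (resp.\ anticompatible) labeling convention, decompose $\mP_D$ via Lemma~\ref{lem:Ppartition-lemma}, and compute $\sum_{f\in\mP_\si}\prod_v x_{f(v)}=F_{n,\Asc(\si)}$ directly from the definitions. No gaps.
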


\begin{proof}
By definition, for any permutation $\si \in \fS_n$, 
$$\sum_{f\in\mP_\si}\prod_{v\in V}x_{f(v)}~=~\sum_{\substack{f(\si(1))\leq f(\si(2))\leq\cdots\leq f(\si(n))\\ \forall i\in \Asc(\si),~f(\si(i))<f(\si(i+1))}}x_{f(\si(1))}x_{f(\si(2))}\cdots x_{f(\si(n))}~=~F_{n,\Asc(\si)}.$$
Hence, by Lemma~\ref{lem:Ppartition-lemma}, for any labeled digraph $D$,
$$\sum_{f\in\mP_D}\prod_{v\in V}x_{f(v)}=\sum_{\si\in\mL(D)}\sum_{f\in\mP_\si}\prod_{v\in V}x_{f(v)}=\sum_{\si\in\mL(D)}F_{n,\Asc(\si)}.$$
Moreover, it is clear from the definitions, that for any compatibly labeled digraph $D$, 
$$\chi^{\As}_D(\xx)=\sum_{f\in\mP_D}\prod_{v\in V}x_{f(v)},$$
because $\mP_D=\{f:V\to\PP,~ f^{\As}_A=A\}$.
Similarly, for any anticompatibly labeled digraph $D$, 
$$\sum_{f\in\mP_D}\prod_{v\in V}x_{f(v)}=\chi^{\wAs}_D(\xx),$$
because $\mP_D=\{f:V\to\PP,~ f^{\wAs}_A=A\}$.
\end{proof}

From Lemma~\ref{lem:chi-in-Fn}, we obtain an expression for $B_D(\xx;y,1)$ when $D$ is acyclic.

\begin{thm}\label{thm:B-fundamental-basis}
If $D=([n],A)$ is a compatibly labeled acyclic digraph, then 
\begin{equation}\label{eq:B-fundamental-basis}
B_D(\xx;y,1)=\sum_{\si\in\fS_n}F_{n,\Asc(\si^{-1})}y^{|\si_A^{\As}|},
\end{equation}
where $\Asc(\si^{-1})=\{i\in [n-1]~|~\si^{-1}(i)<\si^{-1}(i+1)\}$, and $\si_A^{\As}=\{(u,v)\in A,\si(v)>\si(u)\}$. In particular (by Remark~\ref{rk:varphi2}),
$$B_D(q,y,1)=\frac{1}{n!} \sum_{\si\in\fS_n}\l(\prod_{i=1}^{n}\l(q-\l|\Asc(\si^{-1})\r|+i-1\r)\r)y^{|\si_A^{\As}|}.$$
\end{thm}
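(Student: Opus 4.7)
The plan is to reduce to Lemma~\ref{lem:chi-in-Fn} by a simple binomial expansion of the $y$-weight. First I would write, for each coloring $f:V\to\PP$,
\begin{equation*}
y^{|f_A^{\As}|} = \sum_{T \subseteq f_A^{\As}} (y-1)^{|T|},
\end{equation*}
which just encodes $y = 1 + (y-1)$ on each ascent of $f$. Substituting this into the definition of $B_D(\xx;y,1)$ and exchanging the order of summation gives
\begin{equation*}
B_D(\xx;y,1) \;=\; \sum_{T \subseteq A}(y-1)^{|T|}\sum_{\substack{f:V\to\PP \\ T \subseteq f_A^{\As}}}\prod_{v\in V} x_{f(v)} \;=\; \sum_{T \subseteq A}(y-1)^{|T|}\,\chrom^{\As}_{(V,T)}(\xx),
\end{equation*}
since requiring $T \subseteq f_A^{\As}$ is the same as requiring $f$ to be a strict coloring of the sub-digraph $(V,T)$.

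Every sub-digraph $(V,T)$ with $T \subseteq A$ inherits compatible labeling and acyclicity from $D$, so Lemma~\ref{lem:chi-in-Fn} applies and gives $\chrom^{\As}_{(V,T)}(\xx) = \sum_{\pi \in \mL(V,T)} F_{n,\Asc(\pi)}$. Swapping the two resulting sums, for each fixed $\pi \in \fS_n$ I would collect the contribution of all $T \subseteq A$ such that $\pi \in \mL(V,T)$; unpacking the definition of linear extension, this condition is equivalent to $\pi^{-1}(u) < \pi^{-1}(v)$ for every $(u,v) \in T$, that is, $T \subseteq (\pi^{-1})_A^{\As}$. Summing $(y-1)^{|T|}$ over those $T$ collapses to $y^{|(\pi^{-1})_A^{\As}|}$, and hence
\begin{equation*}
B_D(\xx;y,1) = \sum_{\pi \in \fS_n} F_{n,\Asc(\pi)}\, y^{|(\pi^{-1})_A^{\As}|}.
\end{equation*}
The change of variable $\si = \pi^{-1}$ then yields~\eqref{eq:B-fundamental-basis}, and the polynomial identity for $B_D(q,y,1)$ follows by applying the linear map $\varphi$ of Remark~\ref{rk:varphi2} to both sides.

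No step seems to be a genuine obstacle; the argument is essentially bookkeeping plus a single invocation of Lemma~\ref{lem:chi-in-Fn}. The only subtle point I anticipate is keeping $\pi$ and $\pi^{-1}$ straight: Lemma~\ref{lem:chi-in-Fn} is stated in terms of the one-line notation $\pi$, whereas the condition ``the arcs of $T$ respect the order of $\pi$'' naturally involves $\pi^{-1}$, which is exactly what produces the $\Asc(\si^{-1})$ appearing in the statement.
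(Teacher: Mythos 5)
Your proof is correct and follows essentially the same route as the paper's: the paper sets $z=0$ in \eqref{>Delete-quasi} to obtain the deletion expansion $B_D(\xx;y+1,1)=\sum_{R\subseteq A}y^{|A\setminus R|}\chrom^{\As}_{D_{\setminus R}}(\xx)$, then applies Lemma~\ref{lem:chi-in-Fn}, swaps sums, and collapses the inner sum over arc-subsets contained in $(\si^{-1})_A^{\As}$ exactly as you do. The only difference is that you re-derive that expansion directly via the binomial identity $y^{|f_A^{\As}|}=\sum_{T\subseteq f_A^{\As}}(y-1)^{|T|}$ instead of citing the already-established \eqref{>Delete-quasi}, which is an equivalent computation.
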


\begin{proof}
Setting $z=0$ in~\eqref{>Delete-quasi} gives
$$B_D(\xx;y+1,1)=\sum_{R\subseteq A} y^{A\setminus R}\chrom_{D_{\setminus R}}^{\As}(\xx).$$
Since for any set $R$ the digraph $D_{\setminus R}$ is compatibly labeled, Lemma~\ref{lem:chi-in-Fn} gives
$$B_D(\xx;y+1,1)=\sum_{R\subseteq A} y^{|A\setminus R|}\sum_{\si\in \mL(D_{\del R})}F_{n,\Asc(\si)}=\sum_{\si\in \fS_n} F_{n,\Asc(\si)} \sum_{R\subseteq A,\textrm{ such that } \si\in\mL(D_{\del R})}y^{|A\setminus R|}.$$
Now, observe that $\si$ is in $\mL(D_{\del R})$ if and only if $A\setminus R\subseteq {\si^{-1}}^\As_A$. Thus, 
$$B_D(\xx;y+1,1)=\sum_{\si\in \fS_n} F_{n,\Asc(\si)} \sum_{S\subseteq {\si^{-1}}^\As_A}y^{|S|}=\sum_{\si\in \fS_n} F_{n,\Asc(\si)}(y+1)^{|{\si^{-1}}_A^{\As}|},$$
which is equivalent to~\eqref{eq:B-fundamental-basis}.
\end{proof}

\begin{example}
Consider the digraphs $D'$ and $D''$ of Figure~\ref{fig:exp-quasi}. The expansion of $B_{D'}(\xx;y,z)$ and $B_{D''}(\xx;y,z)$ in the basis $\{M_\de\}_{\de}$ is given in Example~\ref{exp:Monomial}. 
Using the change of basis\footnote{The change of basis is given by $\ds M_\de=\sum_{S(\de)\subseteq R\subseteq [n-1]}(-1)^{|R\setminus S(\de)|}F_{n,R}$, where $\ds S(\de_1,\ldots,\de_k)=\l\{\sum_{i=1}^j\de_i,~j\in[k-1]\r\}$.} between $\{M_\de\}_{\de\vDash n}$ and $\{F_{n,S}\}_{S\subseteq [n-1]}$ gives
\begin{eqnarray}
B_{D'}(\xx;y,z)&\!\!=\!\!&(y^2+z^2+2yz-2y-2z+1)F_{3,\{1,2\}}+(yz+y+z-1)(F_{3,\{1\}}+F_{3,\{2\}})+F_{3,\emptyset},\label{eq:BD'}\\
B_{D''}(\xx;y,z)&\!\!=\!\!&(y^2+z^2+2yz-2y-2z+1)F_{3,\{1,2\}}+(z^2+2y-1)F_{3,\{1\}}+(y^2+2z-1)F_{3,\{2\}}+F_{3,\emptyset}.\quad \quad \label{eq:BD''}
\end{eqnarray}
The expression for $B_{D'}(\xx,y,1)$ and $B_{D''}(\xx,y,1)$ can be checked to match those given by Theorem~\ref{thm:B-fundamental-basis} as computed in Tables~\ref{tab:exp-fundaD'} and~\ref{tab:exp-fundaD''}.
\end{example}

\begin{table}[h!]
\centering 
 \begin{tabular}{c|c c c c c c} 
$\si$ & $123$ & $132$ & $213$ & $231$ & $312$ & $321$ \\
$\si^{-1}$ & $123$ & $132$ & $213$ & $312$ & $231$ & $321$ \\ \hline
$|\si_{A}^{\As}|$& 2 & 1 & 1 & 1 & 1 & 0\\ 
$\Asc(\si^{-1})$& $\{1,2\}$ & $\{1\}$ & $\{2\}$ & $\{2\}$ & $\{1\}$ & $\emptyset$\\
$\Asc_\prec(\si^{-1})$& $\emptyset$ & $\{1\}$ & $\{2\}$ & $\emptyset$ & $\emptyset$ & $\emptyset$\\\hline 
Contribution to $B_{D'}(\xx;y,1)$& $y^2F_{3,\{1,2\}}$ & $yF_{3,\{1\}}$ & $yF_{3,\{2\}}$ & $yF_{3,\{2\}}$ & $yF_{3,\{1\}}$ & $F_{3,\emptyset}$\\ 
Contribution to $\om\l([z^{|A|}]B_{D'}(\xx;yz,z)\r)$& $y^2F_{3,\emptyset}$ &$yF_{3,\{1\}}$ &$yF_{3,\{2\}}$ &$yF_{3,\emptyset}$ &$yF_{3,\emptyset}$& $F_{3,\emptyset}$\\[2mm] 
\end{tabular}
\caption{Illustration of Formulas~\eqref{eq:B-fundamental-basis} and~\eqref{eq:Shareshian-Wachs} for the labeled digraph $D'$ represented in Figure~\ref{fig:exp-quasi} (center). In the computation of $\Des_{\prec}$, one uses the order $\prec$ for which the unique relation is $1\prec 3$.}\label{tab:exp-fundaD'}
\end{table}

\begin{table}[h!]
\centering 
 \begin{tabular}{c|c c c c c c} 
$\si$ & $123$ & $132$ & $213$ & $231$ & $312$ & $321$ \\
$\si^{-1}$ & $123$ & $132$ & $213$ & $312$ & $231$ & $321$ \\ \hline
$|\si_{A}^{\As}|$& 2 & 1 & 2 & 0 & 1 & 0\\ 
$\Asc(\si^{-1})$& $\{1,2\}$ & $\{1\}$ & $\{2\}$ & $\{2\}$ & $\{1\}$ & $\emptyset$\\
$\Asc_\prec(\si^{-1})$& $\{1\}$ & $\emptyset$ & $\emptyset$ & $\{2\}$ & $\emptyset$ & $\emptyset$\\\hline 
Contribution to $B_{D'}(\xx;y,1)$& $y^2F_{3,\{1,2\}}$ & $yF_{3,\{1\}}$ & $y^2F_{3,\{2\}}$ & $F_{3,\{2\}}$ & $yF_{3,\{1\}}$ & $F_{3,\emptyset}$\\ 
Contribution to $\om\l([z^{|A|}]B_{D'}(\xx;yz,z)\r)$& $y^2F_{3,\{1\}}$ &$yF_{3,\emptyset}$ &$y^2F_{3,\emptyset}$ &$F_{3,\{2\}}$ &$yF_{3,\emptyset}$& $F_{3,\emptyset}$\\[2mm] 
\end{tabular}
\caption{Illustration of Formulas~\eqref{eq:B-fundamental-basis} and~\eqref{eq:Shareshian-Wachs} for the labeled digraph $D''$ represented in Figure~\ref{fig:exp-quasi} (right). In the computation of $\Des_{\prec}$, one uses the order $\prec$ for which the unique relation is $1\prec 2$.}\label{tab:exp-fundaD''}
\end{table}

\begin{example}\label{exp:exp-permut-stats-quasi}
Let us illustrate Theorem~\ref{thm:B-fundamental-basis} for the digraphs $D$, $D'$, $D''$ represented in Figure~\ref{fig:exp-permut-stats}. For the directed path on $n$ vertices, $D=([n],\{(i,i+1)~|~i\in[n-1]\})$, Theorem~\ref{thm:B-fundamental-basis} gives
\begin{equation}\label{eq:exp-path-quasi}
B_{D}(\xx;y,1)=\sum_{\si\in \fS_n}F_{n,\Asc(\si^{-1})}y^{|\Asc(\si)|}.
\end{equation}
For $D'=([n],A)$ the digraph with $i$ copies of the arc $(i,i+1)$ for all $i\in[n-1]$, Theorem~\ref{thm:B-fundamental-basis} gives
\begin{equation}\label{eq:exp-thickening-path-quasi}
B_{D}(\xx;y,1)=\sum_{\si\in \fS_n}F_{n,\Asc(\si^{-1})}y^{{n \choose 2}-\maj(\si)},
\end{equation}
where $\ds \maj(\si)=\sum_{\substack{i\in[n-1]\\ \si(i)>\si(i+1)}}\!\!\!\!\!\!\!i~$ is the \emph{major index}. Lastly, for $D''=([n],\{(u,v) ~|~ 1\leq u<v\leq n\})$ Theorem~\ref{thm:B-fundamental-basis} gives
\begin{equation}\label{eq:exp-Kn-quasi}
B_{D}(\xx;y,1)=\sum_{\si\in \fS_n}F_{n,\Asc(\si^{-1})}y^{{n \choose 2}-\inv(\si)},
\end{equation}
where $\inv(\si)$ is the number of pairs $(i,j)\in[n]^2$ with $i<j$ and $\si(i)>\si(j)$. 
\end{example}

\subsection{Quasisymmetric function duality, and symmetries of the $B$-polynomial}
In this subsection, we discuss certain results related to the \emph{duality map} $\om$ on quasisymmetric functions. Let $\om$ be the linear map on $\QSym_R(\xx)$ sending each basis element $F_{n,S}$ to $F_{n,[n-1]\setminus S}$. It is clear that $\om$ is an involution. It is also well known that $\om$ is a ring homomorphism (in other words, $w(fg)=w(f)w(g)$ for all $f,g\in\QSym_R(\xx)$). Consequently, the restriction of $\om$ to $\Sym_R(\xx)$ is the well-known \emph{duality map}, which is the unique ring homomorphism sending $e_n=F_{n,[n-1]}$ to $h_n=F_{n,\emptyset}$ for all $n$. We now state a key lemma which is an easy consequence of a well-known fact about the duality for quasisymmetric functions associated to labeled posets. 

\begin{lem}\label{lem:duality-quasisym}
For any digraph $D$, $\ds ~\om\l(\chi^{\As}_D(\xx)\r)~=~\ONE_{D \textrm{ acyclic}}\cdot \chi^{\wAs}_D(\xx)$.
\end{lem}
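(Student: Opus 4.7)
The plan is to dispatch the indicator via a case split. If $D$ contains a directed cycle, then any strictly ascending coloring would have to strictly increase around that cycle and hence cannot exist; thus $\chi^{\As}_D(\xx) = 0$, making both sides of the claimed identity vanish. Consequently it remains only to prove the identity when $D$ is acyclic, where it becomes $\om(\chi^{\As}_D(\xx)) = \chi^{\wAs}_D(\xx)$.

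For acyclic $D$, the first main step is to pick a compatible labeling of $D$, identifying $V$ with $[n]$. Lemma~\ref{lem:chi-in-Fn} then writes
$$\chi^{\As}_D(\xx) \;=\; \sum_{\si \in \mL(D)} F_{n,\Asc(\si)}.$$
Applying $\om$ termwise and using $\om(F_{n,S}) = F_{n,[n-1]\setminus S}$ immediately yields
$$\om\l(\chi^{\As}_D(\xx)\r) \;=\; \sum_{\si \in \mL(D)} F_{n,\Des(\si)}.$$

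The second main step is to reach the same sum starting from $\chi^{\wAs}_D(\xx)$ via Lemma~\ref{lem:chi-in-Fn} under a different labeling. Since $\chi^{\wAs}$ depends only on the underlying unlabeled digraph, I can relabel each vertex $i \in [n]$ as $n+1-i$, producing a digraph $D^*$ on $[n]$ carrying an anticompatible labeling but satisfying $\chi^{\wAs}_{D^*}(\xx) = \chi^{\wAs}_D(\xx)$. Lemma~\ref{lem:chi-in-Fn} in the anticompatible case then gives
$$\chi^{\wAs}_D(\xx) \;=\; \chi^{\wAs}_{D^*}(\xx) \;=\; \sum_{\tau \in \mL(D^*)} F_{n,\Asc(\tau)}.$$
I would then verify that the assignment $\si \mapsto w_0\circ\si$, with $w_0(i)=n+1-i$, is a bijection $\mL(D) \to \mL(D^*)$ under which $\Asc(w_0\circ\si)=\Des(\si)$. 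Both facts unwind directly from definitions: an arc $(u,v)$ of $D$ corresponds to an arc $(n+1-u,n+1-v)$ of $D^*$, inverting the linear-extension condition, while the coordinates of $w_0\circ\si$ are pointwise reflections of those of $\si$, flipping ascents and descents.

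The main obstacle is purely bookkeeping around the two labelings and the involution $w_0$; conceptually the lemma is a direct exploitation of the duality $\om(F_{n,S}) = F_{n,[n-1]\setminus S}$ combined with the labeling-dependent $P$-partition expansions provided by Lemma~\ref{lem:chi-in-Fn}.
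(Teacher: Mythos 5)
Your proposal is correct and follows essentially the same route as the paper: reduce to the acyclic case, expand $\chi^{\As}_D$ and $\chi^{\wAs}_D$ in the fundamental basis via Lemma~\ref{lem:chi-in-Fn} under the compatible and reversed (anticompatible) labelings, and match the two sums through the bijection $\si\mapsto w_0\circ\si$ on linear extensions, which complements ascent sets. (One small wording quibble: the relabeling $i\mapsto n+1-i$ flips the compatibility of the labeling, but the linear-extension condition itself is preserved under the bijection, not inverted.)
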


\begin{proof}
If $D$ is not acyclic, then clearly $\chi^{\As}_D(\xx)=0$ (because a coloring cannot be strictly increasing along a directed cycle). 
Suppose now that $D$ is acyclic. Up to renaming the vertices of $D$, we can assume that $D=([n],A)$ is compatibly labeled. Now let $\wD=([n],\wA)$ be the (anticompatibly labeled) digraph obtained from $D$ by relabeling the vertex $i$ by $n+i-1$ for all $i\in [n]$. Let $\theta\in\fS_n$ be the permutation defined by $\theta(i)=n+1-i$ for all $i\in[n]$. It is clear that the mapping $\Phi$ defined on $\mL(D)$ by $\phi(\si)=\theta\circ \si$ is a bijection between $\mL(D)$ and $\mL(\wD)$. Moreover, $\Asc(\si)=[n-1]\setminus \Asc(\phi(\si))$, so that
$$\chi^{\As}_D(\xx)=\sum_{\si\in \mL(D)}F_{n,\Asc(\si)}=\sum_{\pi\in \mL(\wD)}F_{n,[n-1]\setminus \Asc(\pi)}=\om(\chi^{\wAs}_D(\xx)).$$
\end{proof}

We now state the main result of this subsection.

\begin{thm} \label{thm:duality-quasisym}
For any digraph $D$,
\begin{eqnarray}
\sum_{\substack{R\uplus S\uplus T= A \\ D^{-T}_{\setminus R}\textrm{ acyclic}}} y^{|S|}z^{|T|}\chrom_{D^{-T}_{\setminus R}}^{\wAs}(\xx)&=&\om\l( \B_D(\xx,1+y,1+z)\r).\label{eq:q-1-quasi}
\end{eqnarray}
\end{thm}

\begin{proof} Apply the involution $\om$ to both sides of~\eqref{>Delete-quasi} and use Lemma~\ref{lem:duality-quasisym}.
\end{proof}

\begin{remark} \label{rk:varphi-duality}
Recall the mapping $\varphi$ defined in Remark \ref{rk:varphi}, which is such that $\varphi(B_D(\xx,y,z))=B_D(q,y,z)$. It is well known (and easy to see from Remark \ref{rk:varphi2}), that for any $f\in\QSym_R^n(\xx)$ there is simple relation between $\varphi(f)$ and  $\varphi(\om(f))$. Namely, denoting $\varphi(f)=P(q)$, one has  $\varphi(\om(f))=(-1)^nP(-q)$. Thus, \eqref{eq:q-1-quasi} is a refinement of  \eqref{eq:q-1}.
\end{remark}

Theorem \ref{thm:duality-quasisym} implies some symmetries for $B_D(\xx,y,1)$ in the case of acyclic digraphs.

\begin{cor}\label{cor:symmetries-quasi}
If $D=(V,A)$ is an acyclic digraph, then
\begin{equation}\label{eq:symmetry1-quasi}
\om\l(B_D(\xx;y,1)\r)=\rho\l(y^{|A|}B_D(\xx;1/y,1)\r).
\end{equation}
If the graph underlying $D$ is a forest, then
\begin{equation}\label{eq:symmetry2-quasi}
\om\l(B_D(\xx;y,z)\r)=(y+z-1)^{|A|}B_D\l(\xx;\frac{y}{y+z-1},\frac{z}{y+z-1}\r).
\end{equation}
\end{cor}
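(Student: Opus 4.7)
The plan is to derive both identities by matching the two expansion formulas \eqref{eq:q-1-quasi} and \eqref{geqDelete-quasi} under the respective acyclicity hypotheses. This parallels the strategy used for Theorem~\ref{thm:symmetries} in the scalar case, but with the new ingredient that the involution $\rho$ (acting on the $\xx$-variables) must be introduced via Property~\eqref{it1quasi} of Proposition~\ref{prop:easy-quasisym}.

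For \eqref{eq:symmetry2-quasi}, I would first observe that if the underlying graph of $D$ is a forest, then for every partition $R\uplus S\uplus T=A$ the digraph $D^{-T}_{\setminus R}$ has an underlying graph that is a subforest, and any orientation of a forest is acyclic. Hence the acyclicity restriction in \eqref{eq:q-1-quasi} is automatic, and the left-hand sides of \eqref{eq:q-1-quasi} and \eqref{geqDelete-quasi} coincide. This gives
$$\om\l(B_D(\xx;1+y,1+z)\r)=(1+y+z)^{|A|}B_D\l(\xx;\tfrac{1+y}{1+y+z},\tfrac{1+z}{1+y+z}\r),$$
and the substitution $y\mapsto y-1$, $z\mapsto z-1$ yields \eqref{eq:symmetry2-quasi}.

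For \eqref{eq:symmetry1-quasi}, I would specialize both \eqref{eq:q-1-quasi} and \eqref{geqDelete-quasi} at $z=0$. The factor $z^{|T|}$ forces $T=\emptyset$, so only deletions remain. Since $D$ is assumed acyclic, every $D_{\setminus R}$ is acyclic, and the acyclicity restriction on the sum in \eqref{eq:q-1-quasi} is again automatic. Equating the two specializations gives
$$\om\l(B_D(\xx;1+y,1)\r)=(1+y)^{|A|}B_D\l(\xx;1,\tfrac{1}{1+y}\r),$$
and the substitution $y\mapsto y-1$ turns this into $\om(B_D(\xx;y,1))=y^{|A|}B_D(\xx;1,1/y)$. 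Applying Property~\eqref{it1quasi} of Proposition~\ref{prop:easy-quasisym} to rewrite $B_D(\xx;1,1/y)=\rho(B_D(\xx;1/y,1))$, and using that $\rho$ is $\ZZ[y,y^{-1}]$-linear (it acts only on the $\xx$ variables) so commutes with multiplication by $y^{|A|}$, yields \eqref{eq:symmetry1-quasi}.

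I do not anticipate a serious obstacle: both identities reduce, via the expansion formulas already proved, to straightforward algebraic substitutions. The only subtle point is the final reindexing with $\rho$ in the acyclic case, which is needed because the right-hand side that naturally drops out of \eqref{geqDelete-quasi} at $z=0$ has the ``1/y'' in the descent slot rather than the ascent slot, and $\rho$ is precisely what exchanges those two slots at the level of quasisymmetric functions.
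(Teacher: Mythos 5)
Your proof is correct and follows essentially the same route as the paper: both identities are obtained by comparing \eqref{eq:q-1-quasi} with \eqref{geqDelete-quasi} (specialized at $z=0$ for the acyclic case), using that the acyclicity restriction becomes vacuous under the stated hypotheses, and then changing variables. Your explicit final step invoking Property~(a$'$) of Proposition~\ref{prop:easy-quasisym} to introduce $\rho$ is exactly the ``equivalence'' the paper leaves implicit.
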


%

\begin{proof}
If $D$ is acyclic, then all its subgraphs are acyclic. Hence, setting $z=0$ in~\eqref{eq:q-1-quasi} gives
$$\sum_{R\subseteq A} y^{|A\setminus R|} \chrom_{D_{\del R}}^{\wAs}(\xx)=\om\l(\B_D(\xx;y+1,1)\r).$$
Comparing this to the specialization $z=0$ of~\eqref{geqDelete-quasi} gives $\ds \om\l(\B_D(\xx,y+1,1)\r)=(y+1)^{|A|}\B_D(\xx;1,1/(y+1))$ which is equivalent to~\eqref{eq:symmetry1-quasi}.

If the graph underlying $D$ is a forest, then any digraph obtained from $D$ by deleting or reorienting arcs is acyclic. Hence,~\eqref{eq:q-1-quasi} gives
$$\sum_{R\uplus S\uplus T= A} y^{|S|}z^{|T|}\chrom_{D^{-T}_{\setminus R}}^{\wAs}(\xx)=\om\l(\B_D(\xx;1+y,1+z)\r).$$
Comparing this to~\eqref{geqDelete-quasi} gives $\ds \om\l(\B_D(\xx;1+y,1+z)\r)=(1+y+z)^{|A|}\B_D\left(\xx;\frac{1+y}{1+y+z},\frac{1+z}{1+y+z}\right)$, which is equivalent to~\eqref{eq:symmetry2-quasi}.
\end{proof}

\begin{remark}
Corrolary~\ref{cor:symmetries-quasi} gives the following result for $B_D(q,y,1)$ (which could have been proved directly in Section~\ref{sec:Ehrhart}): if $D=(V,A)$ is acyclic then
$\ds B_D(-q,y,1)=(-1)^{|V|}y^{|A|}B_D(q,1/y,1)$, and if $\uD$ is a forest then $\ds B_D(-q,y,z)=(-1)^{|V|}(y+z-1)^{|A|}B_D\l(q,\frac{y}{y+z-1},\frac{z}{y+z-1}\r)$. 
\end{remark}
 

\begin{remark}
In~\cite{Shareshian-Wachs:chromatic-quasisym-functions} a surprising identity is proved for $\mX_D(\xx;y):=[z^{|A|}]B_D(\xx;yz,z)$ for particular digraphs. We state this result with our notation for the reader's convenience. Let $D=([n],A)$ be a compatibly labeled acyclic digraph without double arcs. Suppose that there exists a partial order $\prec$ on $V$ such that there is an arc $(u,v)\in A$ if and only if $u,v$ are incomparable for $\prec$. Then,~\cite[Theorem 3.1]{Shareshian-Wachs:chromatic-quasisym-functions} states that
\begin{equation}\label{eq:Shareshian-Wachs}
\om([z^{|A|}]B_D(\xx;yz,z))=\sum_{\si\in\fS_n}F_{n,\Asc_{\prec}(\si^{-1})}y^{|\si^{\As}_A|},
\end{equation}
where $\Asc_{\prec}(\pi)=\{i\in[n-1]~|~\pi(i)\prec \pi(i+1)\}$.
For instance, for the digraphs $D'$ and $D''$ of Figure~\ref{fig:exp-quasi}, the expansion of $B_{D'}(\xx;y,z)$ and $B_{D''}(\xx;y,z)$ in the fundamental basis is given in~\eqref{eq:BD'} and~\eqref{eq:BD''}. 
The expression for $[z^{|A|}]B_{D'}(\xx,yz,z)$ and $[z^{|A|}]B_{D''}(\xx,yz,z)$ can be checked to match those given by~\eqref{eq:Shareshian-Wachs} as computed in Tables~\ref{tab:exp-fundaD'} and~\ref{tab:exp-fundaD''}.
\end{remark}


\smallskip
\section{A family of invariants generalizing the $B$-polynomial}\label{sec:B-family}
In this section we define a family of digraph invariants generalizing the $B$-polynomial. 

\begin{thm}\label{thm:def-Qm}
Let $m$ be a positive integer. For any digraph $D=(V,A)$ there exists a unique polynomial in $2m+1$ variables, denoted by $\Q_D^{(m)}(q;y_1,\ldots,y_m;z_1,\ldots,z_m)$, such that for every non-negative integer~$p$,
\begin{equation}\label{def:Qm}
\Q_D^{(m)}(mp+1;y_1,\ldots,y_m;z_1,\ldots,z_m)=\sum_{f:V\to [mp+1]}\prod_{k=1}^m y_k^{|f_A^k|}z_k^{|f_A^{-k}|},
\end{equation}
where $f_A^k$ (resp. $f_A^{-k}$) is the set of arcs $(u,v)\in A$ such that $f(v)-f(u)\in [(k-1)p+1\,..\,kp]$ (resp. $f(u)-f(v)\in [(k-1)p+1\,..\,kp]$). 
\end{thm}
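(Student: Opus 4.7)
The plan is to mirror the proof of Theorem~\ref{thm:existence-B}: group the colourings in~\eqref{def:Qm} by their \emph{bucket type}, and show that each group's contribution is polynomial in $p$. A bucket type is a function $\alpha:A \to \{-m,\ldots,-1,0,1,\ldots,m\}$, and each $q$-colouring $f:V\to [mp+1]$ (with $q=mp+1$) determines a unique bucket type via $\alpha(u,v)=k$ if $(u,v)\in f_A^k$, $\alpha(u,v)=-k$ if $(u,v)\in f_A^{-k}$, and $\alpha(u,v)=0$ if $f(u)=f(v)$. Letting $\mathcal{F}_\alpha(p)$ denote the set of colourings with bucket type $\alpha$, the right-hand side of~\eqref{def:Qm} becomes
\begin{equation*}
\sum_\alpha |\mathcal{F}_\alpha(p)|\,\prod_{\alpha(a)>0} y_{\alpha(a)}\,\prod_{\alpha(a)<0} z_{-\alpha(a)},
\end{equation*}
a finite sum of monomials. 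Uniqueness of $\Q_D^{(m)}$ is automatic, since a polynomial in $q$ is determined by its values at infinitely many non-negative integers, so it suffices to prove that $|\mathcal{F}_\alpha(p)|$ is a polynomial in $p$ for each $\alpha$.

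To this end I would decompose each colour: writing $c = p\lambda + \rho + 1$ with $\lambda\in[0,m]$ and $\rho\in[0,p-1]$ (the constraint $c\leq mp+1$ forcing $\rho=0$ whenever $\lambda=m$) identifies a colouring $f$ with a pair $(\lambda,\rho)$. Since $f(v)-f(u) = p(\lambda(v)-\lambda(u)) + (\rho(v)-\rho(u))$ and $|\rho(v)-\rho(u)|\leq p-1$, a short case analysis shows that for each arc $(u,v)$, the bucket $\alpha(u,v)$ is determined by $\lambda(v)-\lambda(u)$ together with the sign of $\rho(v)-\rho(u)$: for instance $\alpha(u,v)=k>0$ forces either $\lambda(v)-\lambda(u)=k$ with $\rho(v)\leq\rho(u)$, or $\lambda(v)-\lambda(u)=k-1$ with $\rho(v)>\rho(u)$, and the descent and equality cases are analogous. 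Hence for each fixed $\lambda:V\to[0,m]$ compatible with $\alpha$ (only finitely many choices), the bucket constraints reduce to a collection of $<,\leq,=,\geq,>$ comparisons between $\rho(u)$ and $\rho(v)$ along the arcs of $D$, together with the pinning conditions $\rho(v)=0$ for vertices with $\lambda(v)=m$.

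The number of such $\rho:V\to[0,p-1]$ is polynomial in $p$ by an immediate extension of the surjective-reduction argument used in the proof of Theorem~\ref{thm:existence-B}: every such $\rho$ is encoded by a surjective map $V\to[n]$ (for some $n\leq|V|$) satisfying the corresponding comparison constraints, together with an order-preserving injection $[n]\to[0,p-1]$ sending the image of any pinned vertex to $0$; the number of such injections is a polynomial in $p$ (a binomial coefficient of the form $\binom{p}{n}$ or $\binom{p-1}{n-1}$). Summing over the finitely many compatible $\lambda$'s shows $|\mathcal{F}_\alpha(p)|$ is polynomial in $p$, then summing over all bucket types and substituting $p=(q-1)/m$ yields the desired polynomial $\Q_D^{(m)}(q;y_1,\ldots,y_m;z_1,\ldots,z_m)$ of degree at most $|V|$ in $q$. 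The main obstacle is the bookkeeping in the case analysis relating $\alpha$ to the compatible pairs $(\lambda,\rho)$, and in particular ruling out inconsistent combinations at the boundary $\lambda=m$ where $\rho$ is pinned and certain $\alpha$-assignments become infeasible.
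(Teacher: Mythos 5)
Your argument is correct, but it takes a genuinely different route from the paper's. Both proofs begin identically: group the colorings according to the function $\alpha:A\to[-m..m]$ recording which bucket each arc falls into (the paper calls this $g$ and the resulting count $\chi^g_{D}(mp+1)$), and reduce the problem to showing that each such count is a polynomial in $p$. From there the paper goes geometric: it realizes $\chi^g_{D}(mp+1)$ as the number of lattice points in the $(mp+1)$-dilation of a half-open rational region $\De_D^g\subset\RR^V$, splits that region according to the set $U$ of coordinates pinned to the boundary value $1$, shows via total unimodularity of the incidence matrix that each closed piece $\bDe_{D,U}^g$ has vertices in $\frac{1}{m}\ZZ^V$, and then applies Ehrhart quasi-polynomiality together with Ehrhart--Macdonald reciprocity (Theorem~\ref{thm:Ehrhart-quasi}) to the relative interiors. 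You instead stay combinatorial: the Euclidean decomposition $c=p\lambda+\rho+1$ splits each color into a coarse digit $\lambda\in[0..m]$ and a fine digit $\rho\in[0..p-1]$, the bucket of an arc is determined by $\lambda(v)-\lambda(u)$ together with the sign of $\rho(v)-\rho(u)$ (your case analysis here is right, since $|\rho(v)-\rho(u)|\leq p-1$), and for each of the finitely many admissible $\lambda$ the count of admissible $\rho$ is an order-polynomial-type count handled by the surjection-plus-binomial argument of Theorem~\ref{thm:existence-B}. Your route is elementary and self-contained (no reciprocity, no unimodularity), at the price of heavier bookkeeping; the paper's is shorter given that Theorem~\ref{thm:Ehrhart-quasi} is already on record and keeps the argument within the Ehrhart framework of Section~\ref{sec:Ehrhart}. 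One point to tidy up: the decomposition $c=p\lambda+\rho+1$ with $\rho\in[0..p-1]$ degenerates at $p=0$ (the range for $\rho$ is empty and $\lambda$ is not determined), so your identity is a priori only established for $p\geq 1$; since the theorem asserts the identity for all non-negative $p$, you should check separately that your polynomial evaluates to $1$ at $q=1$ (it does, but this does not follow formally from the $p\geq1$ computation, as the terms $\binom{p-1}{n-1}$ do not individually vanish at $p=0$).
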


For instance, $\Q_D^{(1)}(q;y_1;z_1)$ is equal to the $B$-polynomial $B_D(q,y_1,z_1)$. The proof of Theorem~\ref{thm:def-Qm} uses the following classical extension of Ehrhart's Theory to rational polytopes.

\begin{lemma}\label{lem:Ehrhart-quasi}
Let $\Pi\subset \RR^n$ be a polytope and let $m$ be an integer. Suppose that all the coordinates of the vertices of $\Pi$ are in $\frac{1}{m}\ZZ$. 
Then, there exists polynomials $E_0,E_1,\ldots,E_{m-1}$ such that for all non-negative integers $p$ and all $r\in[0..m-1]$,
$$\left| (mp+r)\Pi\cap \ZZ^n\right|=E_r(mp+r).$$
Moreover, for positive integer $p$ and all $r\in[0..m-1]$,
\begin{equation}\label{eq:Ehrhart-reciprocity-quasi}
\left|(mp-r)\Pi^\bu \cap \ZZ^n\right|=(-1)^{\dim(\Pi)}E_{r}(-mp+r),
\end{equation}
where $\Pi^\bu$ is the \emph{relative interior} of $\Pi$ (that is, the interior of $\Pi$ in the smallest affine subspace containing $\Pi$), and $\dim(\Pi)$ is the dimension of the smallest affine subspace containing $\Pi$.
\end{lemma}

\begin{example}
The 2-dimensional polytope $\Pi=[0,1/3]^2$ has vertex-coordinates in $\frac{1}{m}\ZZ$ for $m=3$. The polynomials are $E_i(q)=(\frac{q-i}{3}+1)^2$ for all $i\in\{0,1,2\}$. Since $\Pi^\bu=(0,1/3)^2$, we get $\left|\Pi^\bu \cap \ZZ^2\right|=0=E_2(-1)$, which is in accordance with~\eqref{eq:Ehrhart-reciprocity-quasi}.
\end{example}

\begin{proof}[Proof of Theorem~\ref{thm:def-Qm}]
The uniqueness part is obvious, so we focus on proving the existence of $\Q_D^{(m)}$. For a function $g:A\to[-m..m]$, we denote by $\chi^g_{D}(mp+1)$ the number of $(mp+1)$-colorings $f:V\to [mp+1]$ such that for all $k\in [m]$, $f_A^k=g^{-1}(k)$ and $f_A^{-k}=g^{-1}(-k)$.
Clearly, 
$$\sum_{f:V\to [mp+1]}\prod_{k=1}^m y_k^{|f_A^k|}z_k^{|f_A^{-k}|}=\sum_{g:A\to [-m..m]}\chi^g_{D}(mp+1)\prod_{k=1}^my_k^{\l|g^{-1}(k)\r|}z_k^{\l|g^{-1}(-k)\r|}.$$
Hence, it suffices to prove that for any function $g:A\to [-m..m]$, there exists a polynomial $P$ such that $\chi^g_{D}(mp+1)=P(mp+1)$ for every non-negative integer $p$.

To a function $g:A\to [-m..m]$, we associate a region $\De_D^g$ of $\RR^{V}$ defined at follows:
$$\De_D^g=\{(x_v)_{v\in V}\in \RR^{V}~|~\forall v\in V, x_v\in(0,1],\textrm{ and }\forall a=(u,v)\in A, x_v-x_u\in I_{g(a)}\},$$
where $I_0=\{0\}$ and for all $k\in [m]$, $I_{k}=((k-1)/m,k/m)$ and $I_{-k}=-I_k=(-k/m,-(k-1)/m)$.
Clearly, for any non-negative integer $p$, 
$$\chi_{D}^g(mp+1)=\l|(mp+1)\De_D^g \cap\ZZ^V \r|,$$
because for all $k\in[m]$, $(mp+1)I_k\cap \ZZ=[(k-1)p+1\,..\,kp]$.

For a subset of vertices $U\subseteq V$, we write 
$$\De_{D,U}^g=\{(x_v)_{v\in V}\in \RR^{V}~|~\forall v\in U, x_v=1, \forall v\in V\setminus U, x_v\in (0,1),\textrm{ and }\forall a=(u,v)\in A, x_v-x_u\in I_{g(a)}\},$$
and $\ds \chi_{D,U}^g(mp+1)=\l|(mp+1)\De_{D,U}^g\cap \ZZ^V \r|$. Clearly $\ds \De_D^g=\biguplus_{U\subseteq V}\De_{D,U}^g$, hence $\ds \chi_{D}^g(mp+1)=\sum_{U\subseteq V} \chi_{D,U}^g(mp+1)$. It remains to show that there exists a polynomial $P_{g,U}$ such that for any non-negative integer $p$, $\De_{D,U}^g(mp+1)=P_{g,U}(mp+1)$. If $\De_{D,U}^g=\emptyset$, then we can take $P_{g,U}=0$. Hence, we can now assume that $\De_{D,U}^g$ is non-empty.
Since $\De_{D,U}^g$ is non-empty, it is the relative interior of the polytope 
$$\bDe_{D,U}^g=\{(x_v)_{v\in V}\in \RR^{V}~|~\forall v\in U, x_v=1, \forall v\in V\setminus U, x_v\in [0,1],\textrm{ and }\forall a=(u,v)\in A, x_v-x_u\in \bI_{g(a)}\},$$
where $\bI_0=\{0\}$ and for all $k\in [m]$, $\bI_{k}=[(k-1)/m,k/m]$ and $\bI_{-k}=[-k/m,-(k-1)/m]$. 

We now want to conclude using Ehrhart theory, and need to show that the vertices of $\bDe_{D,U}^g$ are in $\frac{1}{m}\ZZ^V$. Any vertex $(x_v)_{v\in V}$ of $\bDe_{D,U}^g$ is the intersection of $|V|$ hyperplanes $h_1,\ldots,h_{|V|}$ of the form $H_{(u,v),c}:=\{x_v-x_u=c\}$, or $H_{v,c}:=\{x_v=c\}$ with $c\in \frac{1}{m}\ZZ$. 
Is is well-known that the incidence matrix of any graph is totally unimodular, so the determinant of the system of linear equations given by $h_1,\ldots,h_{|V|}$ is either $1$ or $-1$, and its solution $(x_v)_{v\in V}$ is in $\frac{1}{m}\ZZ^{V}$. 
Consequently, by Lemma~\ref{lem:Ehrhart-quasi} there is a polynomial $E_{m-1}$ such that for any non-negative integer $p$,
$$\chi_{D,U}^g(mp+1)=\l|(mp+1){\bDe_{D,U}^g}^\bu\cap \ZZ^V \r|=(-1)^{\dim\l(\bDe_{D,U}^g\r)}E_{m-1}(-mp-1).$$
So we can take $P_{g,U}(q)=(-1)^{\dim\l(\De_{D,U}^g\r)}E_{m-1}(-q)$, which concludes the proof.
\end{proof}

Observe that for any positive integer $m$, $\Q^{(m)}(q;y,\ldots,y;z,\ldots,z)=B_D(q,y,z)$. Hence 
the invariant $\Q^{(m)}_D$ refines the $B$-polynomial.
In particular, the polynomial $\Q^{(m)}_D$ can be specialized to the Potts polynomial of the underlying graph $\uD$:
\begin{equation}\label{eq:Potts3-Bm}
\Q_D^{(m)}(q;y,\ldots,y;y,\ldots,y)=B_D(q,y,y)=P_{\uD}(q,y).
\end{equation}
We will now focus on trivariate specializations of $\Q^{(m)}_D$ which generalize the Potts polynomial of graphs.

\begin{definition}
For a binary word $w=w_1,w_2,\cdots, w_m\in\{-1,1\}^m$, we define
$$\Q_D^{w}(q,y,z)=\Q_D^{(m)}(q,y_1,\ldots,y_m;z_1,\ldots,z_m)_{\l|\begin{array}{ll} y_k=y \textrm{ and } z_k=z & \textrm{ if } w_k=1\\y_k=z \textrm{ and }z_k=y & \textrm{ if } w_k=-1 \end{array}
\r.}. $$
Equivalently, $\Q_D^{w}(q,y,z)$ is the unique polynomial such that for all positive integers $p$
$$\Q_D^{w}(mp+1,y,z)=\sum_{f:V\to [mp+1]}y^{\l|f_A^{\As w}\r|}z^{\l|f_A^{\Ds w}\r|},$$
where $f_A^{\As w}$ (resp $f_A^{\Ds w}$) is the set of arcs $(u,v)\in A$ such that $\frac{f(v)-f(u)}{p}$ (resp. $\frac{f(u)-f(v)}{p}$) is in the set 
\begin{equation}\label{eq:Iw} 
I(w):=\biguplus_{k=1}^m w_k\cdot ((k-1),k],
\end{equation}
where $-((k-1),k]$ means $\l[-k,-(k-1)\r)$.
\end{definition}

With the above notation, we have $\Q_D^{1}(q,y,z)=\Q_D^{(1)}(q,y,z)=B_D(q,y,z)$. Observe that the invariants $\Q_D^{w}(q,y,z)$ are not all distinct, since for instance $\Q_D^{1,1}(q,y,z)=\Q_D^{1}(q,y,z)=\Q_D^{-1}(q,y,z)$. We now claim that the relations stated in Theorem~\ref{thm:Bpoly-Tutte} between the $B$-polynomial and the Tutte polynomial hold more generally for $\Q^{w}$-polynomials.


\begin{thm} \label{thm:PottsOneTwo-gle}
Let $w=w_1w_2\cdots w_m\in\{-1,1\}^m$. For any graph $G$, 
\begin{equation}\label{eq:PottsOne-gle}
\Q_{\orient{G}}^{w}(q,y,z)=P_G(q,yz),
\end{equation}
where $\orient{G}$ is the digraph corresponding to $G$ and 
\begin{equation}\label{eq:PottsTwo-gle}
\frac{1}{2^{|E|}}\sum_{\vec{G}\in \ori(G)} \Q^w_{\vec{G}}(q,y,z)
= \Potts_G\left(q,\frac{y+z}{2}\right),
\end{equation}
where the sum is over all digraphs $\vec{G}$ obtained by orienting $G$.
Moreover, for any digraph $D$, 
\begin{equation}\label{eq:PottsThree-gle}
\Q_D^{w}(q,y,y)=P_{\und{D}}(q,y),
\end{equation}
where $\und{D}$ is the graph underlying $D$.
\end{thm}

\begin{proof}
The proofs of~\eqref{eq:PottsOne} and~\eqref{eq:PottsTwo} extend almost verbatim to prove~\eqref{eq:PottsOne-gle} and~\eqref{eq:PottsTwo-gle}. For the proof of~\eqref{eq:PottsOne-gle} we use the fact that $y_kz_k=yz$ for all $k\in[m]$, whereas for the proof of~\eqref{eq:PottsTwo-gle}, we use the fact that $y_k+z_k=y+z$ for all $k\in[m]$.
Finally,~\eqref{eq:PottsThree-gle} is just a reformulation of~\eqref{eq:Potts3-Bm}.
\end{proof}

Equation~\eqref{eq:PottsOne-gle} shows that although the invariants $\Q_D^{w}(q,y,z)$ are not all equal, they all coincide when considering only undirected graphs, and are legitimate generalizations of the Potts polynomial. We now explore the other properties of the $B$-polynomial which extend to the $\Q^{w}$-polynomials. First note that the Properties~\eqref{it1}-\eqref{it8} of Proposition~\ref{prop:easy} extend to $\Q^{w}$, up to changing~\eqref{it3} and~\eqref{it4} into
\begin{compactitem}
\item[(\ref{it3}')] $\ds \sum_{f: V\to [mp+1]}x^{\l|f_A^=\r|}y^{\l|f_A^{\As w}\r|}z^{\l|f_A^{\Ds w}\r|}=x^{|A|}\Q^w_D\l(q,\frac{y}{x},\frac{z}{x}\r)$,
\item[(\ref{it4}')] $\ds \sum_{f: V\to [mp+1]}y^{\l|f_A^{\wAs w}\r|}z^{\l|f_A^{\wDs w}\r|}=(yz)^{|A|}\Q^w_D\l(q,\frac{1}{y},\frac{1}{z}\r)$,
\end{compactitem}
where $f_A^{\wAs w}$ (resp. $f_A^{\wDs w}$) is the set of arcs $(u,v)\in A$ such that $\frac{f(v)-f(u)}{p}$ (resp. $\frac{f(v)-f(u)}{p}$)
is in the set $\ds \{0\}\cup I(w)$. Also the recurrence relations of the $B$-polynomial given in Lemmas~\ref{lem:rec-edge} and~\ref{lem:rec-arc} extend verbatim to the $\Q^w$-polynomials.

We now define the analogues of the strict and weak-chromatic polynomials, and generalize Theorem~\ref{thm:expansions} to the invariants $\Q^w$.
\begin{defn}
Let $D=(V,A)$ be a digraph and let $w=w_1w_2\cdots w_m\in\{-1,1\}^m$. The \emph{$w$-strict-chromatic polynomial} of $D$ is the unique polynomial $\chi^{\As w}_G(q)$ such that for all non-negative integers~$p$,
\begin{equation*}
\chrom_D^{\As w}(mp+1) = \l|\{f:V\to [mp+1]~|~ \forall (u,v)\in A,~ \frac{f(v)-f(u)}{p}\in I(w) \}\r|,
\end{equation*}
where $I(w)$ is defined by~\eqref{eq:Iw}. 
The \emph{$w$-weak-chromatic polynomial} of $D$ is the unique polynomial $\chi^{\wAs w}_G(q)$ such that for all non-negative integers $p$,
\begin{equation*}
\chrom_D^{\wAs w}(mp+1) = \l|\{f:V\to [mp+1]~|~ \forall (u,v)\in A,~ \frac{f(v)-f(u)}{p}\in \{0\}\cup I(w)\}\r|.
\end{equation*}
\end{defn}
Note that $\chi^{\As w}_G(q)=[y^{|A|}]\Q^w_D(mp+1,y,1)$ and $\chrom_D^{\wAs w}(q)=B_D^w(q,0,1)$. Moreover, $\chrom_D^{\As 1}=\chrom_D^{\As}$ and $\chrom_D^{\wAs 1}=\chrom_D^{\wAs}$.
We now state the generalization of Theorem~\ref{thm:expansions}.
\begin{thm}\label{thm:expansions-w}
Let $D=(V,A)$ be a digraph and let $w=w_1w_2\cdots w_m\in\{-1,1\}^m$. Then,
\begin{eqnarray}
\sum_{R\uplus S\uplus T= A} y^{|S|}z^{|T|}\chrom_{D^{-T}_{\setminus R}}^{\As w}(q)&=& \Q^w_D(q,1+y,1+z),\label{>Delete-w}\\
\sum_{R\uplus S\uplus T= A} y^{|S|}z^{|T|}\chrom_{D^{-T}_{\setminus R}}^{\wAs w}(q)&=&(1+y+z)^{|A|}\Q^w_D\left(q,\frac{1+y}{1+y+z},\frac{1+z}{1+y+z}\right),\label{geqDelete-w}\\
\sum_{R\uplus S\uplus T = A} y^{|S|}z^{|T|}\chrom_{D^{-T}_{/R}}^{\As w}(q) &=&\Q^w_D(q,y,z),\label{>Contract-w}\\
\sum_{R\uplus S\uplus T = A} y^{|S|}z^{|T|}\chrom_{D^{-T}_{/R}}^{\wAs w}(q)&=&(1+y+z)^{|A|}\Q^w_D\left(q,\frac{y}{1+y+z},\frac{z}{1+y+z}\right).\label{geqContract-w}
\end{eqnarray}
\end{thm}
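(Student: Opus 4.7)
The plan is to follow, almost verbatim, the proof of Theorem~\ref{thm:expansions}, replacing the trichotomy ``$n<0$, $n=0$, $n>0$'' by the trichotomy ``$n/p\in -I(w)$, $n=0$, $n/p\in I(w)$''. First, for every positive integer $p$ I would rewrite the definition of $\Q^w_D$ as
\[
\Q^w_D(mp+1,y,z)=\sum_{f:V\to[mp+1]}\prod_{(u,v)\in A}\theta^w_p\l(f(u)-f(v)\r),
\]
where $\theta^w_p(n):=\ONE_{n=0}+y\,\ONE_{n/p\in -I(w)}+z\,\ONE_{n/p\in I(w)}$. This is just a repackaging of the definition, and rests on the elementary fact that the three sets $\{0\}$, $I(w)$, $-I(w)$ partition $[-m,m]$, so that the three sets $f^=_A$, $f^{\As w}_A$, $f^{\Ds w}_A$ partition~$A$.

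Next I would record four decompositions of $\theta^w_p$ as sums of three indicator-weighted terms, in exact analogy with \eqref{eq:<delete}--\eqref{eq:leq-contract}:
\begin{align*}
\theta^w_p(n)&=1+(y-1)\ONE_{n/p\in -I(w)}+(z-1)\ONE_{n/p\in I(w)},\\
\theta^w_p(n)&=(y+z-1)+(1-z)\ONE_{n/p\in\{0\}\cup(-I(w))}+(1-y)\ONE_{n/p\in\{0\}\cup I(w)},\\
\theta^w_p(n)&=\ONE_{n=0}+y\,\ONE_{n/p\in -I(w)}+z\,\ONE_{n/p\in I(w)},\\
\theta^w_p(n)&=(1-y-z)\ONE_{n=0}+y\,\ONE_{n/p\in\{0\}\cup(-I(w))}+z\,\ONE_{n/p\in\{0\}\cup I(w)}.
\end{align*}
Each is verified by checking the three cases $n=0$, $n/p\in I(w)$, $n/p\in -I(w)$. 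For each decomposition I would expand the product over arcs of $A$, swap the sum over $f$ with the resulting sum over ordered partitions $A=R\uplus S\uplus T$, and identify the inner sum as a $w$-chromatic polynomial of a modified digraph. In the first two (delete-type) decompositions, the conditions ``$S\subseteq f^{\As w}_A$ and $T\subseteq f^{\Ds w}_A$'' (resp. ``$S\subseteq f^{\wAs w}_A$ and $T\subseteq f^{\wDs w}_A$'') translate, after reorienting the arcs of $T$ and deleting those of $R$, into requiring that every arc of $D^{-T}_{\setminus R}$ be a strict (resp. weak) $w$-ascent of $f$, producing $\chrom^{\As w}_{D^{-T}_{\setminus R}}$ or $\chrom^{\wAs w}_{D^{-T}_{\setminus R}}$. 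For the last two (contract-type) decompositions, one further uses the bijection between $q$-colorings $f$ of $D$ with $R\subseteq f^=_A$ and $q$-colorings of $D_{/R}$, yielding $\chrom^{\As w}_{D^{-T}_{/R}}$ or $\chrom^{\wAs w}_{D^{-T}_{/R}}$.

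Since $\Q^w_D(q,y,z)$ and the $w$-chromatic polynomials $\chrom^{\As w}_{D'}(q)$, $\chrom^{\wAs w}_{D'}(q)$ are all polynomials in $q$ (by Theorem~\ref{thm:def-Qm}) and the four resulting identities hold at the infinite set $\{mp+1\mid p\in\PP\}$ of values of $q$, they hold identically as polynomial identities in $q,y,z$. The stated forms \eqref{>Delete-w}--\eqref{geqContract-w} then follow by the same rational changes of variables used in the proof of Theorem~\ref{thm:expansions} (for instance $(y,z)\mapsto(1+y,1+z)$ for the first decomposition). I do not anticipate any real obstacle: the role previously played by sign considerations is now played mechanically by membership in $I(w)$, $-I(w)$, or $\{0\}$, and the bookkeeping is formally identical to the unweighted case.
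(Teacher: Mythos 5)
Your proposal is correct and follows essentially the same route as the paper: the authors likewise rewrite $\Q^w_D(mp+1,y,z)$ as a sum over colorings of a product of a three-valued function $\theta_w$ of the normalized color differences, record the same four decompositions of $\theta_w$ with coefficient triples $(1,y-1,z-1)$, $(y+z-1,1-z,1-y)$, $(1,y,z)$, $(1-y-z,y,z)$, and then expand exactly as in the proof of Theorem~\ref{thm:expansions}. Your added remarks (that $\{0\}$, $I(w)$, $-I(w)$ trichotomize the possible values of $(f(v)-f(u))/p$, and that polynomiality in $q$ lets one pass from the arithmetic progression $q=mp+1$ to a polynomial identity) are correct and merely make explicit what the paper leaves implicit.
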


\begin{proof}
This is a straightforward adaptation of the proof of Theorem~\ref{thm:expansions} which is left to the reader.
\end{proof}

It should be mentioned that other properties of the $B$-polynomial, such as those established in Sections~\ref{sec:Ehrhart} and~\ref{sec:Tutte-eval} (and the quasisymmetric refinement of Section~\ref{sec:quasisym}) do not extend to $\Q^w$-polynomials for arbitrary $w$.

We now focus on a subfamily of the $\Q^w$-polynomials which enjoy additional properties. We say that a word $w=w_1\ldots w_m\in\{-1,1\}^m$ is \emph{antipalindromic} if for all $k\in[m]$, $w_{m+1-k}=-w_k$. We will show that when $w$ is antipalindromic, the invariant $\Q^w_D$ is essentially determined by the oriented matroid underlying $D$.

We start with the key observation. Let $D=(V,A)$ be a digraph, and let $f$ be a $q$-colorings of $D$. We denote by $\ov{f}:A\to \ZZ/q\ZZ$ the function defined by $\ov{f}(u,v)=f(v)-f(u)$ mod $q$, and we call $\ov{f}$ a \emph{$q$-coflow} of $D$. 
\begin{lemma}\label{lem:coflow}
Each $q$-coflow of $D$ corresponds to $q^{\comp(D)}$ distinct $q$-colorings. Moreover, if $w$ is antipalindromic, then for all non-negative integers $p$
$$\Q^w_D(mp+1,y,z)=(mp+1)^{\comp(D)}\sum_{\ov{f}:A\to \ZZ/(mp+1)\ZZ,\textrm{ $(mp+1)$-coflow}}y^{\ov{f}_A^{\As w}}z^{\ov{f}_A^{\Ds w}},$$
where $\ov{f}_A^{\As w}$ (resp. $\ov{f}_A^{\Ds w}$) is the set of arcs $a\in A$ such that $\ov{f}(a)\equiv (k-1)p+r$ mod $(mp+1)$, with $r\in[p]$ and with $k\in[m]$ such that $w_k=1$ (resp. $w_k=-1$).
\end{lemma}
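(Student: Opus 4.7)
The plan is to establish the two assertions in turn. For the bijection part: two $q$-colorings $f,g\colon V \to [q]$ induce the same coflow if and only if $f(v) - g(v) \pmod{q}$ is constant on each connected component of $D$. Since $[q]$ is a set of representatives for $\ZZ/q\ZZ$, specifying a shift in $\ZZ/q\ZZ$ for each connected component uniquely determines a new $q$-coloring with the same coflow, so each $q$-coflow arises from exactly $q^{\comp(D)}$ colorings.

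The heart of the proof is showing that under the antipalyndromic assumption, the statistics $|f_A^{\As w}|$ and $|f_A^{\Ds w}|$ depend only on the coflow $\ov{f}$ and coincide with $|\ov{f}_A^{\As w}|$ and $|\ov{f}_A^{\Ds w}|$ as defined in the lemma. Once this is proved, the formula follows by grouping colorings according to their coflow and factoring out the shared monomial $y^{|\ov{f}_A^{\As w}|}z^{|\ov{f}_A^{\Ds w}|}$, with the first assertion producing the prefactor $(mp+1)^{\comp(D)}$.

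Fix an arc $a = (u,v)$, and set $\delta = f(v) - f(u) \in [-mp, mp]$ and $c = \ov{f}(a) \in [0, mp]$. Then $\delta \in \{c,\, c - (mp+1)\}$, with $\delta = 0$ forced when $c = 0$. When $c > 0$, write $c = (k-1)p + r$ uniquely with $k \in [m]$, $r \in [p]$. If $\delta = c > 0$, then $\delta/p \in ((k-1), k]$, so $a \in f_A^{\As w}$ iff $w_k = 1$. If $\delta = c - (mp+1) < 0$, a direct computation gives $-\delta = (k'-1)p + r'$ with $k' = m-k+1$ and $r' = p+1-r$, so $\delta/p \in [-k', -(k'-1))$, placing $a \in f_A^{\As w}$ iff $w_{m-k+1} = -1$. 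These two classifications agree precisely when $w_k = 1 \Leftrightarrow w_{m-k+1} = -1$, which is the antipalyndromic condition; the analogous analysis for $f_A^{\Ds w}$ gives the same requirement. Under this hypothesis, membership in $f_A^{\As w}$ (resp.\ $f_A^{\Ds w}$) depends only on whether $w_k = 1$ (resp.\ $w_k = -1$), matching the definitions of $\ov{f}_A^{\As w}$ and $\ov{f}_A^{\Ds w}$ in the lemma.

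The main obstacle will be the bookkeeping in the index substitution $k \mapsto m-k+1$, in particular verifying uniformity of the formula across the boundary case $r = p$ (where one finds $r' = 1$ and $k' = m-k+1$); beyond that, the antipalyndromic hypothesis drops in exactly as the compatibility condition forcing the two representatives of $c$ modulo $mp+1$ to produce the same classification of each arc.
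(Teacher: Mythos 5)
Your proof is correct and follows essentially the same route as the paper's: the same component-wise shift argument for the $q^{\comp(D)}$ count, and the same key step of showing $f_A^{\As w}=\ov{f}_A^{\As w}$ by matching the negative interval $[-kp,-(k-1)p-1]$ with its shift by $mp+1$ and invoking $w_{m+1-k}=-w_k$. Your version just makes the case analysis on the two representatives $\delta\in\{c,\,c-(mp+1)\}$ more explicit than the paper does.
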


\begin{proof}
First, it is clear that two $q$-colorings $f,g$ satisfy $\ov{f}=\ov{g}$ if and only if if for each component $D'$ of $D$, there is a constant $c$ such that for every vertex $v$ of $D'$, $f(v)\equiv g(v)+c$ modulo $q$. So for each $q$-coloring $f$, there are $q^{\comp(D)}$ $q$-colorings $g$ such that $\ov{g}=\ov{f}$. 

We now suppose that $w$ is antipalindromic. It suffices to prove that any $(mp+1)$-coloring $f$ satisfies $f_A^{\As w}=\ov{f}_A^{\As w}$ and $f_A^{\Ds w}=\ov{f}_A^{\Ds w}$. By definition, $(u,v)\in f_A^{\As w}$ if and only if there exists $k\in [m]$ such that either $w_k=1$ and $(k-1)p+1\leq f(v)-f(u)\leq kp$, or $w_k=-1$ and $-kp\leq f(v)-f(u)\leq -(k-1)p-1$. Since $w$ is antipalindromic, the second case can be rewritten as $w_{m+1-k}=1$ and $(m+1-k)p-(mp+1)\leq f(v)-f(u)\leq (m+1-k)p-(mp+1)$. Thus, $(u,v)\in f_A^{\As w}$ if and only if $(u,v)\in \ov{f}_A^{\As w}$. Similarly, $(u,v)\in f_A^{\Ds w}$ if and only if $(u,v)\in \ov{f}_A^{\Ds w}$. 
\end{proof}

%


 
Recall that each digraph $D$ has an underlying \emph{oriented matroid} $M_D$, which is the oriented matroid whose circuits are given by the simple cycles of $D$. We refer the reader to~\cite{Bjorner:oriented-matroids} for definitions about oriented matroids, but we will not use any result from oriented matroid theory. 

\begin{cor}
If $w\in\{-1,1\}^m$ is antipalindromic, then for any digraphs $D$, the polynomial $\Q^w_D(q,y,z)$ is divisible by $q^{\comp(D)}$. Moreover, the polynomial invariant
 $q^{-\comp(D)}\Q^w_D(q,y,z)$ only depends on the oriented matroid $M_D$ underlying $D$ (that is, $M_D=M_{D'}$ implies $q^{-\comp(D)}\Q^w_D(q,y,z)=q^{-\comp(D')}\Q^w_{D'}(q,y,z)$).
\end{cor}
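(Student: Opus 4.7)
The plan is to use the coflow reformulation given by Lemma~\ref{lem:coflow}. For antipalindromic $w$ and every non-negative integer $p$, that lemma gives
\begin{equation*}
\Q^w_D(mp+1, y, z) \;=\; (mp+1)^{\comp(D)}\, N^*_D(p, y, z),
\end{equation*}
where $N^*_D(p, y, z) := \sum_{\overline{f}} y^{|\overline{f}_A^{\As w}|} z^{|\overline{f}_A^{\Ds w}|}$ sums over the $(mp+1)$-coflows of $D$. My approach has two parts: (i) upgrade this numerical factorization to a polynomial identity in $q$, which both yields the divisibility and defines the quotient; and (ii) identify the quotient as an oriented matroid invariant.

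For (i), I would first show that $N^*_D(p, y, z)$ is itself a polynomial in $p$, by an Ehrhart-type argument adapting the proof of Theorem~\ref{thm:def-Qm}. Stratify the $(mp+1)$-coflows by their \emph{type} $g: A \to [-m..m]$ recording the interval of $\ZZ/(mp+1)\ZZ$ in which each $\overline{f}(a)$ lies. Fixing one vertex per connected component of $D$ to a chosen value (which selects a representative of each $(mp+1)^{\comp(D)}$-orbit of colorings lifting the coflow) realizes coflows of type $g$ as lattice points in a rational polytope of dimension $|V|-\comp(D)$ whose vertices lie in $\frac{1}{m}\ZZ$, by total unimodularity of the vertex-arc incidence matrix. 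Theorem~\ref{thm:Ehrhart-quasi} then produces a polynomial lattice-point count, and summing over types with the appropriate weights yields a polynomial $\widetilde{N}_D(q, y, z) \in \QQ[q, y, z]$ interpolating $N^*_D(p, y, z)$ at $q = mp+1$. Since $\Q^w_D(q, y, z)$ and $q^{\comp(D)}\widetilde{N}_D(q, y, z)$ are polynomials that agree at infinitely many values of $q$, they coincide as polynomials, proving $q^{\comp(D)} \mid \Q^w_D$.

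For (ii), the key observation is that the data entering $N^*_D(p, y, z)$ is intrinsic to $M_D$. A function $\overline{f}: A \to \ZZ/q\ZZ$ is a $q$-coflow of $D$ if and only if $\sum_{a \in C^+}\overline{f}(a) = \sum_{a \in C^-}\overline{f}(a)$ in $\ZZ/q\ZZ$ for every signed circuit $(C^+, C^-)$ of $M_D$; and the weights $|\overline{f}_A^{\As w}|, |\overline{f}_A^{\Ds w}|$ depend only on which intervals of $\ZZ/(mp+1)\ZZ$ the values $\overline{f}(a)$ fall in, hence only on $\overline{f}$ and $w$. Thus $M_D = M_{D'}$ forces $N^*_D(p, y, z) = N^*_{D'}(p, y, z)$ for every $p$, whence $\widetilde{N}_D = \widetilde{N}_{D'}$ as polynomials. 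Any discrepancy between $\comp(D)$ and $\comp(D')$ can only come from isolated vertices (which are invisible to $M_D$); each such vertex multiplies $\Q^w$ by $q$ and increases $\comp$ by one, so the ratio $q^{-\comp(D)}\Q^w_D$ is unaffected.

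The main obstacle is the Ehrhart step in (i): the polytope now lives in the cycle space of $D$ rather than in the coloring cube, and one must carefully handle the quotient by the $\comp(D)$-dimensional sublattice of constant-per-component shifts, which is precisely what produces the factor $q^{\comp(D)}$ in Lemma~\ref{lem:coflow}. Once this polynomiality is secured, the oriented matroid invariance follows essentially tautologically from the signed circuit characterization of $q$-coflows.
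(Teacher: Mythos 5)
Your argument is correct in substance, but the divisibility half takes a genuinely different (and heavier) route than the paper's. For oriented matroid invariance you do essentially what the paper does: reduce to the coflow sum via Lemma~\ref{lem:coflow} and observe that the set of $q$-coflows is cut out by the signed-circuit conditions, hence depends only on $M_D$; this part matches the paper's proof. For divisibility, however, the paper does \emph{not} redo an Ehrhart computation on the coflow side. It uses a short arithmetic trick: for connected $D$ and each coefficient $P(q)=[y^az^b]\Q^w_D(q,y,z)$, Lemma~\ref{lem:coflow} shows $P(mp+1)$ is an integer divisible by $mp+1$ for every $p$; writing $\frac{1}{q}P(q)=r/q+R(q)$ with $R$ a polynomial with rational coefficients and letting $p\to\infty$ forces $r=0$, so $q\mid P(q)$, and multiplicativity over components finishes it. Your plan instead establishes polynomiality of the coflow count $N^*_D(p,y,z)$ directly, by stratifying coflows by type, pinning one vertex per component, and invoking Theorem~\ref{thm:Ehrhart-quasi} on polytopes of dimension $|V|-\comp(D)$ with vertices in $\frac1m\ZZ$. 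This works (the bijection between coflows and basepointed colorings is exactly the content of the first part of Lemma~\ref{lem:coflow}, and total unimodularity gives the vertex condition as in the proof of Theorem~\ref{thm:def-Qm}), and it buys you something the paper's shortcut does not: an explicit identification of the quotient $q^{-\comp(D)}\Q^w_D$ as a weighted coflow enumerator interpolated by Ehrhart polynomials. The cost is that you must carefully handle the wraparound when translating the mod-$(mp+1)$ interval conditions into linear inequalities, which you acknowledge but do not fully carry out. One small inaccuracy: your claim that a discrepancy between $\comp(D)$ and $\comp(D')$ ``can only come from isolated vertices'' is false (two digraphs with the same free oriented matroid can differ in $\comp$ without isolated vertices, e.g.\ a two-arc path versus two disjoint single arcs); fortunately this remark is unnecessary, since your identity $q^{-\comp(D)}\Q^w_D=\widetilde{N}_D$ together with $\widetilde{N}_D=\widetilde{N}_{D'}$ already closes the argument.
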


\begin{proof}
Let $D$ be a connected digraph. Let $a,b$ be non-negative integers, and let $P(q):=[y^az^b]\Q^w_D(q,y,z)$. We know that $P(q)$ is a polynomial in $q$ and we want to show that it is divisible by~$q$.
By Lemma~\ref{lem:coflow}, we know that for any non-negative integer $p$, the value $P(mp+1)$ is an integer divisible by $mp+1$.
Since the polynomial $Q(p)=P(mp+1)$ is integer valued, $Q(p)$ has rational coefficients (by Lagrange interpolation). Hence  $P(q)$ has rational coefficients. Thus, there is a rational number $r$ and a polynomial $R(q)$ with rational coefficients such that $\frac{1}{q}P(q)=r/q+R(q)$. Let $d$ be the least common multiple of the denominators of the coefficients of $R$. Then for all non-negative integers $p$, $r/(mp+1)\in \frac{1}{d}\ZZ$. Taking $p$ large enough shows that $r=0$. Hence, $\frac{1}{q}P(q)$ is a polynomial. This proves that $q$ divides $\Q^w_D(q,y,z)$. Since the invariant $\Q^w$ is multiplicative over connected components, this implies that for any digraph $q^{\comp(D)}$ divides $\Q^w_D(q,y,z)$.

Next, we show that the polynomial $q^{-\comp(D)}\Q^w_D$ only depends on the oriented matroid $M_D$ underlying $D$. By Lemma~\ref{lem:coflow} we only need to show that for any positive integer~$q$, the set of $q$-coflows only depends on $M_D$. Moreover, it is easy to see that a function $g:A\to \ZZ/q\ZZ$ is a $q$-coflow if and only if along any simple cycle $C$ of $D$, 
$$\sum_{a\in C^-}g(a)\equiv\sum_{a\in C^+}g(a),$$
where $C^-$ and $C^+$ are the sets of arc in one direction and the other direction along $D$. This characterization only depends on the simple cycles of $D$, hence only on $M_D$.
\end{proof}

\begin{remark}
By contrast to the case of antipalindromic words $w$, it is clear that the invariant $B_D$ is \emph{not} determined by the underlying oriented matroid $M_D$ and the number of components. Indeed, by Corollary~\ref{cor:additional-prop}, the invariant $B_D$ detects the length of directed paths which cannot be detected from $M_D$ (for instance every oriented tree has the same underlying oriented matroid).
\end{remark}


In the companion paper~\cite{Awan-Bernardi:A-poly} we will investigate in more detail the invariant $q^{-\comp(D)}\Q^{w}_D$ corresponding to the simplest antipalindromic word $w=1,-1$, for digraphs and more generally for regular oriented matroids. The invariant $q^{-\comp(D)}\Q^{1,-1}_D$ is denoted by $A_D$ in that paper. \\

\section{Concluding remarks}\label{sec:conclusion}
We have shown that the $B$-polynomial is a natural generalization of the Potts polynomial (or equivalently, Tutte Polynomial) to digraphs.
We have explored several aspects of this invariant. However, this leaves many more open questions, besides the two combinatorial puzzles already mentioned in Questions~\ref{question:mysterious-identities-1} and~\ref{question:mysterious-identities}.

One subject we did not attempt to cover is the computational complexity of determining the $B$-polynomial of a given digraph $D$. 
Given that $B_D(q,y,z)$ is a polynomial in $q$ of degree $|V|$, it is uniquely determined by the values $\{B_D(q,y,z)\}$ for all $q\in[|V|]$, and the fact that $B_D(0,y,z)=0$. Hence, using a naive approach gives a way of determining $B_D(q,y,z)$ in $O(|A|\cdot|V|^{|V|})$ operations. Using~\eqref{eq:q-falling-factorial} actually gives a slightly better complexity of $\ln(2)^{-|V|}|V|!$ up to a subexponential factor (and~\eqref{eq:monomial-basis} gives the same complexity for computing the quasisymmetric $B$-polynomial). We do not know if a more efficient method exists.

\begin{question}
What is the complexity of computing $B_D(q,y,z)$ in terms of $|V|$? What are the specializations of $B_D(q,y,z)$ which are computable in polynomial time?
\end{question}

In Section~\ref{sec:Tutte-eval}, we gave an interpretation of $T^{(1)}_D(2,0)$, $T^{(2)}_D(2,0)$, and $T^{(2)}_D(0,2)$ as counting some classes of orientations of the mixed graph $D$. In view of the known results for the Tutte polynomial of graphs, we ask the following question.

\begin{question} 
For $i\in\{1,2\}$, is there an interpretations for the evaluations $T_D^{(i)}(1,1)$, $T_D^{(i)}(0,1)$, $T_D^{(i)}(1,0)$, $T_D^{(i)}(2,1)$, and $T_D^{(i)}(1,2)$ as counting some classes of orientations of a mixed graph $D$?
\end{question}


Recall from Proposition~\ref{prop:easy} that $B_D(q,y,z)$ is symmetric in $y$ and $z$. Therefore $B_D(q,y,z)$ is a polynomial in $q$, $yz$ and $y+z$. Moreover, when $D$ corresponds to a graph (that is, $D=\orient{G}$ for some $G$), then~\eqref{eq:PottsOne} shows that $B_D(q,y,z)$ is actually a polynomial in $q$ and $yz$ only. It would be interesting to know if the following converse is true.

\begin{question}
Is it true that $B_D(q,y,z)$ is a function of $q$ and $yz$ if and only if $D=\orient{G}$ for a graph $G$?
\end{question}

For planar digraphs, we have established a duality relation~\eqref{eq:duality} for the specialization $B_D(-1,y,z)$ of the $B$-polynomial, while the classical duality relation for the Tutte polynomial gives a duality relation~\eqref{eq:duality-Bclassical} for the specialization $B_D(q,y,y)$. We have not found a common generalization of these two relations in general.

\begin{question}
Is there an infinite class of planar digraphs for which there exists a common generalization of the two duality relations~\eqref{eq:duality} and~\eqref{eq:duality-Bclassical}? 
\end{question}

Let $D$ a digraph. Recall from Remark~\ref{rk:PottsThreedetects} that the number of spanning trees of the underlying graph $\uD$ can be obtained from $B_D(q,y,z)$. In contrast, Remark~\ref{rk:not-detected} shows that the number of \emph{directed spanning trees} of $D$ (spanning trees oriented in such a way that every vertex except one has indegree 1) cannot be obtained from $B_D(q,y,z)$. 

\begin{question}
Is it possible to obtain from $B_D(q,y,z)$ the number of \emph{alternating spanning trees} of $D$, that is, spanning trees $T$ of $D$ such that every vertex of $T$ is either a source or a sink of $T$?
Is it possible to obtain from $B_D(\xx;y,z)$ either the number of \emph{directed spanning trees} of $D$ or the number of \emph{alternating spanning trees} of $D$?
\end{question}

Lastly, one can wonder which classes of digraphs are distinguished by $B_D(q,y,z)$, and by $B_D(\xx;y,z)$. We say that a digraph invariant \emph{distinguishes between} a class $C$ of digraphs if the digraphs in $C$ can be determined from the value of the invariant (equivalently, there does not exist non-isomorphic digraphs in $C$ with the same value of the invariant). It is clear that $B_D(\xx;y,z)$ does not distinguish between all digraphs, and not even between all the digraphs corresponding to graphs. Indeed, by~\eqref{eq:PottsOne-multi} for all graphs $G$, $B_{\orient{G}}(\xx;y,z)$ is equivalent to Tutte symmetric function $S_G(\xx;y)$, and this invariant does not distinguish between all graphs~\cite{Stanley:symmetric-chromatic-more}. However, in~\cite{Stanley:symmetric-chromatic} Stanley raised the question of whether the chromatic symmetric function can distinguish between all trees. 
This question has generated a lot of interest, but is still open~\cite{Zamora:distinguishing-caterpillars,Loebl:distinguishing-trees,Zamora:distinguishing-trees,Martin:distinguishing-trees}. Let us also mention the article~\cite{McNamara:equality-Ppartitions}, in which some necessary conditions are given for two acyclic digraphs to have the same weak-chromatic polynomial (and more generally, conditions for labeled posets to have the same quasisymmetric functions). 
Similar questions are natural here.
\begin{question}\label{question:distinguishing}
Does the quasisymmetric $B$-polynomial distinguish between
\begin{compactenum}
\item[(i)] all \emph{oriented trees} (digraph whose underlying graph is a tree)? 
\item[(ii)] all \emph{directed trees} (oriented trees such that every vertex except one has indegree 1)? 
\item[(iii)] all \emph{alternating trees} (oriented trees such that every vertex is either a source or a sink)?
\item[(iv)] all \emph{alternating digraphs} (digraphs such that every vertex is either a source or a sink)? 
\end{compactenum}
\end{question}

Answering either cases (i), (ii) or (iii) of Question~\ref{question:distinguishing} would hopefully shed some light on Stanley's original question. Questions (ii) and (iii) seems intuitively more tractable than Stanley's original question. For instance, it is clear from Remark~\ref{rk:detect-quasi} that the quasisymmetric $B$-polynomial distinguishes between the directed caterpillars whose root-vertex (vertex of indegree 0) is at an extremity of the spine (because such caterpillars are distinguished by their profile). 
However, to our shame, we do not know if a positive answer to Stanley's original question would give a positive answer to cases (ii) or (iii).
Indeed we do not know the answer to the following seemingly easier questions.
\begin{question}
\begin{compactenum}
\item[(i)] Let $G$ be any fixed graph. Does the quasisymmetric $B$-polynomial distinguish between all the non-isomorphic digraphs obtained by orienting $G$?
\item[(ii)] Let $T$ be any fixed tree. Does the quasisymmetric $B$-polynomial distinguish between all the non-isomorphic oriented (resp. directed, alternating) trees obtained by orienting $T$?
\end{compactenum}
\end{question}

Finally, let us mention that the motivation for the case (iv) of Question~\ref{question:distinguishing} is that the class of alternating digraphs identifies with the class of \emph{hypergraphs}. By \emph{hypergraph} we mean a bipartite graph $G=(V,E)$ with a prescribed bipartition of the vertex set $V=V_1\uplus V_2$. The identification with alternating digraphs is simply obtained by orienting every edge from $V_1$ to $V_2$. Because of this correspondence, it would be interesting to study what properties of alternating digraphs can be obtained from $B_D(q,y,z)$, and from $B_D(\xx;y,z)$.\\

\bigskip

\noindent \textbf{Acknowledgments:} 
We are grateful to Ira Gessel and Darij Grinberg for very valuable discussions about quasi-symmetric functions, and to Mathias Beck, Valentin Feray, Martin Loebl, Lorenzo Traldi, and Dominic Welsh for bibliographical references, and to Sam Hopkins for his explanations about the relevance of fourientations activities to Question~\ref{question:mysterious-identities}. We also thank the referees for suggesting improvements to the presentation of this paper.

\bibliographystyle{plain} 
\bibliography{biblio-Tutte}

\end{document}